\documentclass[11pt,a4paper,reqno]{amsart}
\usepackage{amssymb,amsmath,amsthm}
\usepackage[utf8]{inputenc}
\usepackage[T1]{fontenc}
\usepackage{enumerate}
\usepackage[all]{xy}
\usepackage{mathrsfs}
\usepackage{array}
\usepackage{fullpage}
\usepackage{comment}
\usepackage{xcolor}

\def\A{\mathbb{A}}
\def\Z{\mathbb{Z}}
\def\N{\mathbb{N}}
\def\Q{\mathbb{Q}}

\def\s1{{\rm Spec}^{(1)}}

\newtheorem{thm}{Theorem}[section]
\newtheorem{lem}[thm]{Lemma}
\newtheorem{prop}[thm]{Proposition}
\newtheorem{cor}[thm]{Corollary}
\newtheorem{claim}{Claim}

\theoremstyle{definition}
\newtheorem{defn}[thm]{Definition}

\newtheorem{exa}[thm]{Example}
\newtheorem{rmk}[thm]{Remark}

\title{Integrally Hilbertian rings and  \break the polynomial Schinzel hypothesis}

\author{Angelot Behajaina}
\email{angelot.behajaina@univ-lille.fr}
\address{Univ. Lille, CNRS, UMR 8524, Laboratoire Paul Painlev\'e, F-59000 Lille, France}

\author{Pierre D\`{e}bes}
\email{pierre.debes@univ-lille.fr}
\address{Univ. Lille, CNRS, UMR 8524, Laboratoire Paul Painlev\'e, F-59000 Lille, France}

\author{Joachim König}
\email{jkoenig@knue.ac.kr}
\address{Department of Mathematics Education, Korea National University of Education, Cheongju, South Korea}

\subjclass[2020] {Primary 12E05, 12E25, 12E30; Sec. 11C08, 13Fxx}


\keywords{Polynomials, Irreducibility, Specialization, Hilbertian Fields, Schinzel Hypothesis}
\begin{document}

\begin{abstract}
The classical Hilbert specialization property is a field-theoretic tool ensuring that polynomial irreducibility over a field is preserved under specialization of some of the variables. We develop an integral counterpart  by introducing the notion of {integrally Hilbertian rings}, where specialization takes place inside a ring and irreducibility is required over the ring. A core part shows how new obstacles to irreducibility such as 
coefficient divisors or fixed divisors can be dealt with over Krull domains, a large class of rings including UFDs, Dedekind domains, etc. 
As a result, we obtain a general criterion for integral hilbertianity, along with 
many examples, \hbox{e.g.} all rings of integers of number fields.
Polynomial rings over arbitrary domains are other examples. As an application, we prove a polynomial variant of the Schinzel Hypothesis
on prime values of polynomials with integer coefficients: if $\mathcal{Z}$ is an integrally Hilbertian ring, the hypothesis becomes a true statement if the ring of integers ${\mathbb Z}$ is replaced by the polynomial ring $\mathcal{Z}[U]$ and ``prime'' by ``irreducible''.
This result generalizes previous works and fits in a unified framework for Schinzel-type phenomena that we introduce. We
further obtain an additional conclusion that has some noteworthy consequences  for the classical Schinzel Hypothesis itself.
\end{abstract}


\maketitle

\section{Introduction} \label{sec:intro}

The \emph{Hilbert specialization property}, by which irreducibility of polynomials is preserved under specialization of some of the variables (Definition \ref{def:Hilb-ring-field}), is one of the few general and powerful tools in Arithmetic Geometry, making it possible, in certain situations, to specialize some parameters while keeping the algebraic structure unchanged. 
For example, when a field ${\mathcal Q}$ has the property, \hbox{i.e.}, is a \emph{Hilbertian field}, any finite separable field extension of $\mathcal{Q}(T)$ yields, via appropriate specializations  in $\mathcal{Q}$ of the variable $T$, extensions of $\mathcal{Q}$ with the same degree and the same Galois group. Hilbert spe\-cialization can also be helpful in affine geometry to produce irreducible affine ${\mathcal Q}$-varieties
from an algebraic $\mathcal{Q}(T)$-family of varieties parametrized by parameters $T$, no\-tably when Bertini-type results, which require irreducibility of the family {over the algebraic closure of} ${\mathcal Q}(T)$, \hbox{do not apply.}
\vskip 1mm

Hilbert specialization is a \emph{field} process with polynomials defined over a \emph{field} and irreducibility requested over that \emph{field}. As initiated in \cite{BDKN22}, we aim to make it an \emph{integral} tool with specialization \emph{in a ring} {and} irreducibility required \emph{over the ring}. The difficulty in passing from fields to rings is that irreducibility over a ring is sensitive to coefficient divisibility and ``fixed divisors'' phenomena (as illustrated before Definition \ref{def:int-Hilb}); these have no analog over fields.
\vskip 1mm

Our proposed integral version of Hilbertian field theory goes as follows. Definition \ref{def:int-Hilb} of \emph{integrally Hilbertian rings} encapsulates the problem. We then have three main results. Theorem \ref{thm:main1} is a criterion for a ring to be integrally Hilbertian, leading to many examples, \hbox{e.g.} all rings of integers of number fields (Corollary \ref{cor:main1}). Theorem \ref{thm:main1-2} provides another fundamental example: any polynomial ring over an integral domain. Applications concern the celebrated Schinzel Hypothesis on prime values of polynomials with integer coefficients. Theorem \ref{thm:schinzel-main} establishes a polynomial variant, typically for rings like ${\mathbb Z}[U]$ instead of ${\mathbb Z}$. This result improves on \cite[Theorem 1.1]{BDN20b}; in particular, we have an additional assertion that has some noteworthy implications for the original Schinzel Hypothesis itself, over ${\mathbb Z}$ (Corollary \ref{cor:SH}).


\vskip 1mm 
We now present our work in more detail.

\subsection{Integrally Hilbertian rings} Start with these classical definitions.

\begin{defn} \label{def:Hilb-ring-field} A domain ${\mathcal Z}$ with fraction field $\mathcal{Q}$, of characteristic $0$ or imperfect  (\hbox{i.e.} of characteristic $p>0$ and ${\mathcal Z}\not={\mathcal Z}^p$), is a \emph{Hilbertian ring} if the following \emph{Hilbert specialization property} holds. For
any two tuples of indeterminates $\underline{T}=(T_1,\ldots,T_k)$ (the \emph{parameters}) and $\underline{Y}=(Y_1,\ldots,Y_n)$ (the \emph{variables}), with $k,n\geq 1$, and any irreducible polynomials $P_1,\ldots,P_s \in {\mathcal Q}[\underline{T},\underline{Y}]$, of degree $\geq 1$ in $\underline{Y}$, the subset of ${\mathcal Z}^k$ consisting of all $\underline{t}$ such that $P_1(\underline{t},\underline{Y}),\dots,P_s(\underline{t},\underline{Y})$ are irreducible in $\mathcal{Q}[\underline{Y}]$ is Zariski-dense. A \emph{Hilbertian field} is a field that is a {Hilbertian ring}. 
\end{defn}

Hilbertian fields include the rational field ${\mathbb Q}$, rational function fields $\kappa(u_1,\ldots,u_r)$ in $r\geq 1$ variables over an arbitrary field $\kappa$, and their finite extensions. 
Hilbertian rings include all domains ${\mathcal Z}$ such that the fraction field ${\mathcal Q}$ is a field of the preceding type. \cite{FJ23} is a classical reference on Hilbertian fields and rings. The \emph{imperfectness assumption} at the beginning of Definition \ref{def:Hilb-ring-field} is here for consistency with \cite{FJ23} and is not restrictive (Remark \ref{remiiredZvsQ}). 
\vskip 1mm


A key obstruction in the intended integral setting is the existence of fixed divisors. Recall that given an integral domain ${\mathcal Z}$ with fraction field ${\mathcal Q}$ and a polynomial $P \in {\mathcal Z}[\underline{T},\underline{Y}]$, a nonunit $a \in {\mathcal Z}$, $a\not=0$, is called a \emph{fixed divisor} of $P$ \hbox{w.r.t.} $\underline{T}$ if
$P(\underline{t},\underline{Y}) \equiv 0 \pmod{a}$ for every $\underline{t} \in {\mathcal Z}^k$. For example $2\in \Z$ is a fixed divisor of $P=(T^2-T)Y+(T^2-T-2)$
\hbox{w.r.t.} ${T}$; consequently, no specialized polynomial $P(t,Y)$ with $t\in {\mathbb Z}$
is irreducible in $\Z[Y]$. 
\vskip 1mm

The following property of a ring is a central theme of the paper. 

\begin{defn} \label{def:int-Hilb}
A domain ${\mathcal Z}$ is said to be \emph{integrally Hilbertian} if the following condition is satisfied. For any polynomials $P_1,\ldots,P_s \in \mathcal{Z}[\underline{T},\underline{Y}]$, irreducible  in ${\mathcal Q}[\underline{T},\underline{Y}]$, of degree $\geq 1$ in $\underline{Y}$, and such that  the product $P_1 \cdots P_s$ has no fixed divisor \hbox{w.r.t.} $\underline{T}$, there is a Zariski-dense subset ${H} \subset {\mathcal Z}^k$ such that for every $\underline{t} \in {H}$, 
\vskip 0,5mm

\noindent
{\rm (a)} \emph{the polynomials $P_1(\underline{t},\underline{Y}),\dots,P_s(\underline{t},\underline{Y})$ are irreducible in $\mathcal{Q}[\underline{Y}]$}, 
\vskip 0,5mm

\noindent
{\rm (b)} \emph{the product $\prod_{i=1}^s P_i(\underline t,\underline Y)$ has no nonunit divisor in ${\mathcal Z}$}.
\end{defn}

This gives in particular that, as intended, for every $\underline{t} \in {H}$, 
\vskip 0,5mm

\noindent
(c) {\emph{the polynomials $P_1(\underline{t},\underline{Y}),\dots,P_s(\underline{t},\underline{Y})$ are irreducible in $\mathcal{Z}[\underline{Y}]$ and in $\mathcal{Q}[\underline{Y}]$.}
\vskip 0,7mm

\noindent
Assertion (c) can in fact equivalently replace (a)\hskip 2pt \&\hskip 2pt (b) if \emph{$s=1$ (one polynomial) or ${\mathcal Z}$ is a \emph{UFD}}.\footnote{As is usual, UFD stands for Unique Factorization Domain, and similarly, PID for Principal Ideal Domain.}
\vskip 1mm

Condition (b) shows the difference between the Hilbertian ring notion -- only the field condition (a) is required, fixed divisors are irrelevant -- and the integrally Hilbertian property -- one must simultaneously control divisors (a) of positive degree and (b) of zero degree.
\vskip 1mm

Remark \ref{rmk:no-fixed-divisor} explains how the ``no fixed divisor assumption'' can concretely be guaranteed. Furthermore, under some common condition on ${\mathcal Z}$ (\hbox{e.g.} being a Krull domain as discussed below), one can get rid of fixed divisors of the product $P_1\cdots P_s$ and so reduce to the assumption, at the cost of localizing ${\mathcal Z}$ to ${\mathcal Z}[1/\varphi]$ for some nonzero $\varphi \in {\mathcal Z}$ (Lemma \ref{lem:fixed-divisors-removal}).
\vskip 1mm

As observed in Proposition \ref{intH=>Hring}, an integrally Hilbertian ring must be a Hilbertian ring. Conversely, the following result provides a condition for a Hilbertian ring to be integrally Hilbertian.

\begin{thm} \label{thm:main1} Let ${\mathcal Z}$ be a Hilbertian ring that is also a Krull domain. Then ${\mathcal Z}$ is an integrally Hilbertian ring.
\end{thm}

Definition of Krull domains is recalled in \S \ref{ssec:krulldom}. They include:
\vskip 0,2mm

\noindent
(a) Unique Factorization Domains,
\vskip 0,5mm

\noindent
(b) rings that are Noetherian and integrally closed (in particular Dedekind domains).
\vskip 1mm

Special case (a) was established in  \cite{BDKN22}. 
Case (b) and the general Krull case are new. 
\vskip 1mm

An advantage of the Krull assumption 
is that it is preserved by localization (by any multiplicative subset), and by taking integral closures in finite extensions of the fraction field (Remark \ref{rmk:Krull-domain}).
The same is true with the Hilbertian ring property (Remark \ref{remiiredZvsQ}). Whence this complement:
\vskip 1,5mm

\noindent
{\bf Theorem \ref{thm:main1} \hbox{\rm (continued)}} \emph{If ${\mathcal Z}$ is a Hilbertian ring and a Krull domain,
not only ${\mathcal Z}$ 
but also 
any localization of ${\mathcal Z}$, and any integral closure of ${\mathcal Z}$ in a finite extension of ${\mathcal Q}$, is integrally Hilbertian.}
\vskip 1,5mm

In particular, 
for polynomials $P_1,\ldots,P_s$ \emph{with} some fixed divisors, the integrally Hilbertian conclusion  holds over  ${\mathcal Z}[1/\varphi]$ for some nonzero $\varphi \in {\mathcal Z}$.  
\vskip 1mm

As the ring of integers ${\mathbb Z}$ and the polynomial ring $\kappa[\underline x]$, with $\kappa$ any field and $\underline x$ any nonempty set of indeterminates, are  Hilbertian rings, and  are UFD, we immediately deduce from Theorem \ref{thm:main1} these examples of integrally Hilbertian rings.

\begin{cor}\label{cor:main1} The ring of integers ${\mathcal Z}$ of any number field is {integrally Hilbertian}. If $\kappa$ is any field, then the integral closure ${\mathcal Z}$ of $\kappa[\underline x]$ in any finite extension $E$ of $\kappa(\underline x)$ is integrally Hilbertian.
\end{cor}

\begin{rmk} 
\label{rmk:number-fields} 
For rings of integers of number fields, Corollary \ref{cor:main1} improves on \cite[Theorem 1.1]{BDKN22} where ${\mathcal Z}$ is assumed to be of class number $1$. 
There is a gap between the PID and the Dedekind cases, and, more generally, between the UFD and non UFD cases in Theorem \ref{thm:main1}. The traditional Dedekind idea to replace prime elements by prime ideals faces the following difficulty:
a polynomial $P \in \mathcal{Z}[\underline{T},\underline{Y}]$ 
may have no fixed divisor $a\in {\mathcal Z}$ but still have a fixed \emph{prime 
ideal divisor} ${\mathcal P}$ \hbox{w.r.t.} $\underline T$ (\hbox{i.e.} 
$P(\underline{t},\underline{Y}) \equiv 0 \pmod{{\mathcal P}}$ 
for every $\underline{t} \in {\mathcal Z}^k$); see Example \ref{exa:examfixide}.
\end{rmk}

The Dedekind idea (as discussed above) led to partial results in \cite{BDKN22}. Here instead we identify two structural properties that are
key in the UFD case and manage to extend them in some form -- properties (NVA) and (SF) of \S \ref{ss:stronSchpro} -- to the Krull case.
A central property of Krull rings established along the way is the so-called  \emph{coprime Schinzel Hypothesis} from \cite{BDKN22}, which is about coprime values of coprime polynomials (see Remark \ref{rmk:int-H-coprimeS}) and goes back to a fundamental result of Schinzel over $\Z$ \cite{schinzel2002}. It appears here as a more flexible and stronger variant, which we call being \emph{locally Schinzel} (Definition \ref{def:schinzel-ring}). The proof of Theorem \ref{thm:main1} can then be summarized as follows: for a Hilbertian ring ${\mathcal Z}$,
\vskip 1mm

\centerline{Krull $\Rightarrow$  (NVA) \& (SF) $\Rightarrow$  locally Schinzel $\Rightarrow$  integrally Hilbertian.}

\vskip 1mm
\noindent
We refer to Lemmas \ref{lem:interrest} and \ref{lem:chinremthe} for first implication (true up to some equivalence), to Proposition \ref{lem:mainaxiomatic} for the second one, and to Proposition \ref{prop:schinzel+Hil=intH} for the last one.\footnote{
In the special case of ring of integers of number fields, passing from ${\mathbb Z}$ to the general non PID case can be achieved in a different manner, based on an alternate approach which eva\-luates the density of the subset $H$ from Definition \ref{def:int-Hilb} of ``good'' specializations. See \cite{BD22} for the ``easy'' case ${\mathcal Z}={\mathbb Z}$ and \cite{BD26} for the general case for which more refined results from geometry of numbers are used.
} 


\vskip 1mm

Polynomial rings $\mathcal{Z}={\mathcal R}[U]$ over a domain $\mathcal{R}$ are not Krull rings in general, and it is unclear whether they satisfy (NVA) and (SF). 
Yet we can show the following. 

\begin{thm} \label{thm:main1-2}
Let ${\mathcal R}$ be an arbitrary domain. Then the polynomial ring ${\mathcal R}[U]$ is locally Schinzel and integrally Hilbertian.
\end{thm}

Theorem \ref{thm:main1-2} is reminiscent of the analogous result that $K(U)$ is a Hilbertian field for any field $K$. When ${\mathcal R}$ is not integrally closed, which implies that ${\mathcal R}[U]$ is not integrally closed either, Theorem \ref{thm:main1-2} shows that implication ``\emph{integrally Hilbertian} $\Rightarrow$ \emph{integrally closed}'' does not hold.

\subsection{Schinzel type applications} Our applications relate to the famous conjectural 
\vskip 1,5mm

 \noindent
 {\bf Schinzel Hypothesis}.  \emph{If $P_1,\ldots,P_s\in \Z[T]$ are irreducible polynomials such that the product $\prod_{i=1}^s P_i$ has no fixed divisor in $\Z$} (the local condition), \emph{then there exist infinitely many integers $m\in \Z$ such that $P_1(m),\ldots,P_s(m)$ are simultaneously prime}. 
\vskip 1,5mm

We offer a generalizing definition in \S \ref{ssec:schinzel-rings}.
Roughly speaking, a \emph{Schinzel ring}\footnote{While the name ``Schinzel ring'' obviously comes from the motivating Schinzel Hypothesis, the name ``locally Schinzel ring'' that was previously introduced refers to the fact that, as we will see, the defining property of these rings is concerned with the \emph{local condition} assumed in the Schinzel Hypothesis.} is defined to be a domain ${\mathcal Z}$, equipped with some \emph{Weil heights}, for which, given polynomials $P_1,\ldots,P_s \in {\mathcal Z}[\underline T]$, irreducible in $\mathcal{Q}[\underline T]$, the \emph{local condition} -- absence of fixed divisors \hbox{w.r.t.} $\underline T$ for $P_1\cdots P_s$ -- is the only obstruction to producing irreducible specializations $P_1(\underline t), \ldots, P_s(\underline t)$ at points $\underline t\in {\mathcal Z}^k$ of arbitrarily large heights;
see Definition \ref{def:schinzel}.




The original Schinzel Hypothesis corresponds to the special case ${\mathcal Z}={\mathbb Z}$ and $k=1$; see Theorem \ref{thm:schinzel1=>Schinzelk} for the equivalence between one and several parameters. The polynomial ring ${\mathcal Z}[Y_1,\ldots,Y_n]$, equipped with the Weil heights induced by the partial degree $\deg_{Y_i}(\cdot)$ in each variable $Y_1,\ldots,Y_n$, 
is another natural test for the Schinzel ring notion.
We obtain the following.

\begin{thm} \label{thm:schinzel-main}
Let ${\mathcal Z}$ be an integrally Hilbertian ring and $Y_1,\ldots,Y_n$ be $n\geq 1$ va\-riables. Then the polynomial ring ${\mathcal Z}[Y_1,\ldots,Y_n]$ is a Schinzel ring \hbox{w.r.t.} the partial degree Weil heights $H_1,\ldots,H_n$.   
\end{thm}

More explicitly, this rephrases as follows: 
\vskip 1mm

\noindent
(**) \emph{given $s\geq 1$ polynomials $P_1,\ldots,P_s \in {\mathcal Z}[\underline{Y}][\underline T]$,  irreducible in $\mathcal{Q}(\underline{Y})[\underline T]$
and such that the product $P_1\cdots P_s$ has no nonunit divisor in ${\mathcal Z}[\underline Y]$, the subset ${\mathcal S}\subset {\mathcal Z}[\underline Y]^k$ consisting of all $k$-tuples $(M_1(\underline Y),\ldots,M_k(\underline Y))$ 
of polynomials}
\vskip 0,5mm
- \emph{each of degree bigger than any prescribed constant $A>0$ in each variable $Y_1,\ldots, Y_n$ },
\vskip 0,5mm 
-  \emph{and such that  
$P_i(\underline Y, M_1(\underline Y),\ldots,M_k(\underline Y))$ is irreducible in ${\mathcal Z}[\underline Y]$ ($i=1,\ldots,s$)},
\vskip 0,5mm

\noindent
\emph{is Zariski-dense.}
\vskip 1mm

Theorem \ref{thm:main3} shows a more precise version: 
the polynomials $M_i(\underline Y)$ can be prescribed  to be of degree in $Y_j$ any given integer $d_{ij}$ ($i=1,\ldots,k$, $j=1,\ldots,n$), provided that these integers $d_{ij}$ are suitably large. Precise bounds will be given, and for fixed suitable $d_{ij}$, the set of tuples $(M_1(\underline Y),\ldots,M_k(\underline Y))$, with these degrees, will be shown to be Zariski-dense, \hbox{in some natural sense.}
\vskip 1mm

\begin{rmk} \label{rmk:schinzel}
(a) Theorem \ref{thm:schinzel-main} improves on \cite[Theorem 1.1]{BDN20b} where ${\mathcal Z}$ is a UFD. The classical consequences of the Schinzel Hypothesis: the Dirichlet Theorem, the Twin Prime Problem, etc., as well as an analog of the Goldbach Problem hold for the ring ${\mathcal Z}[\underline Y]$ (as stated in \cite{BDN20b}) under our more general assumption that ${\mathcal Z}$ is integrally Hilbertian. 
\vskip 1mm


\noindent
(b) There has been a parallel activity on the ``polynomial Schinzel Hypothesis'' in the case that ${\mathcal Z}$ is a finite field (\cite{BenWi2005} \cite{Pol2008} \cite{BS2012} \cite{BSJa2012} \cite{Entin2016}). Specific ingredients for finite fields, \hbox{e.g.} the Lang-Weil estimates, are used. Our Hilbertian environment allows a different specialization approach, which, thanks to our \emph{integrally} Hilbertian property, can be performed \emph{over the ring}.
\end{rmk}

It follows from Theorem \ref{thm:main1-2} and Theorem \ref{thm:schinzel-main} that if ${\mathcal Z}$ is an arbitrary domain, then the polynomial ring ${\mathcal Z}[Y_0,Y_1,\ldots,Y_n]$ is a Schinzel ring \hbox{w.r.t.} the partial degree Weil heights $H_1,\ldots,H_n$. More explicit forms of these results in fact show the following, which improves on \cite[Theorem 1.2]{BDN20b} where  ${\mathcal Z}$ is a field.

\begin{cor} \label{cor:Z[Y_0,...,Y_n]}
Let ${\mathcal Z}$ be an arbitrary domain. Then the polynomial ring ${\mathcal Z}[Y_0, Y_1,\ldots,Y_n]$ with $n\geq 1$ is a Schinzel ring \hbox{w.r.t.} the partial degree Weil heights $H_0, H_1,\ldots,H_n$. 
\end{cor}

Obviously, the polynomial ring ${\mathcal Z}[Y_0]$ is not a Schinzel ring if ${\mathcal Z}$ is an algebraically closed field. So the assumption $n\geq 1$ cannot be relaxed.  Another (subtler) counterexample is the polynomial ring ${\mathbb F}_2[Y_0]$; from an example of Swan \cite[pp. 1102-1103]{swan62}, the polynomial $M(Y_0)^8+Y_0^3$ is reducible for all polynomials $M\in {\mathbb F}_2[Y_0]$. 
\vskip 1mm

Compared to \cite{BDN20b}, beside the bigger generality of Theorem \ref{thm:schinzel-main}, 
we have the additional conclusion that the local condition can be preserved by specialization. 
\vskip 1mm


\noindent
{\bf Theorem \ref{thm:schinzel-main} (continued).}} \emph{Retain assumptions and notation from {\rm (**)} above. Assume further that $\mathcal{Z}$ is 
a Krull domain and that the product $P_1\cdots P_s$ has no fixed divisor \hbox{w.r.t.} the $(k+n)$-tuple $(\underline T,\underline{Y})$. 
Then the subset ${\mathcal S}_0$ of the set ${\mathcal S}$ from statement \hbox{\rm (**)} consisting of $k$-tuples $(M_1(\underline Y),\ldots,M_k(\underline Y)) \in {\mathcal S}$ satisfying the additional property that the polynomial $(P_1\cdots P_s)(\underline Y, M_1(\underline Y),\ldots,M_k(\underline Y))$ has no fixed divisor \hbox{w.r.t.} $\underline Y$,  is still Zariski-dense in ${\mathcal Z}[\underline Y]^k$.}
\vskip 2mm

Theorem \ref{thm:hypschistronggen} provides a more precise version, showing further that, \emph{for $k=n=1$} (one parameter, one variable), \emph{one can request that the polynomial $M(Y)$ be monic and of arbitrary degree $d\geq 1$}. A noteworthy consequence for the original Schinzel Hypothesis is the following.

\begin{cor} \label{cor:SH} Assume that the original Schinzel Hypothesis over $\Z$ is true. 
Let $P_1,\ldots,P_s \in \Z[T]$ be some irreducible polynomials such that the product $P_1\cdots P_s$ has no fixed divisor \hbox{w.r.t.} $T$. Then there exist monic polynomials $M\in \mathbb{Z}[T]$ of arbitrary degree
$d\geq 1$ such that for infinitely many $t \in {\mathbb N}$, the integers $P_1(M(t)),\ldots,P_s(M(t))$ are prime numbers.
\end{cor}

Namely, the special case $k=n=1$ of Theorem \ref{thm:hypschistronggen}, applied to the polynomials $P_1,\ldots,P_s \in \Z[T]\subset \Z[Y][T]$ (of degree $0$ in $\underline Y$) provides a monic polynomial $M(T)\in {\mathbb Z}[T]$ of arbitrary degree $d\geq 1$ such that
the polynomials $P_1(M(T)),\ldots,P_s(M(T))$ satisfy the assumptions of the Schinzel Hypothesis. Applying the Schinzel Hypothesis yields the required conclusion.
\vskip 1mm

Corollary \ref{cor:SH} shows in particular that, to prove the Schinzel Hypothesis, one may restrict to  polynomials of arbitrarily large degree — a somewhat striking reduction.
\vskip 2mm



The rest of the paper is organized as follows.
\vskip 1mm

In Section \ref{sec:presen}, we introduce the general setup. Definitions  of integrally Hilbertian rings and locally Schinzel rings are given and related (Proposition \ref{prop:schinzel+Hil=intH}).
We then state Theorem \ref{thm-integ-Hilb}, which generalizes Theorem \ref{thm:main1}.
\vskip 1mm

In Section \ref{sec:proof-main-lemma}, we prove Theorem \ref{thm-integ-Hilb}. The proof is reduced to that of Lemma \ref{lem:main} that asserts that Krull rings (and other rings) are locally Schinzel. We then introduce the previously mentioned conditions (NVA) and (SF), and use them to prove Lemma \ref{lem:main}.
\vskip 1mm

Section \ref{sec:poly-rings} is devoted to the proof of Theorem \ref{thm:main1-2} that polynomial rings are \hbox{integrally Hilbertian.}
\vskip 1mm

In Section \ref{sec:applications}, we define Schinzel rings and discuss our main results in this context, mainly about polynomial rings being Schinzel rings.
\vskip 1mm


Section \ref{sec:proofs} provides the proofs of the results of Section \ref{sec:applications}, including that of Theorem \ref{thm:schinzel-main} and its corollaries.


\vskip 2mm

\noindent
{\bf Acknowledgements.} The first two authors acknowledge the support of the CDP C2EMPI, as well as the French State under the France-2030 programme, the University of Lille, the Initiative of Excellence of the University of Lille, the European Metropolis of Lille for their funding and support of the R-CDP-24-004-C2EMPI project. The third author was supported by the National Research Foundation of Korea (NRF Basic Research Grant RS-2023-00239917).

\vskip 2mm

Fix for the whole paper a domain ${\mathcal Z}$ and denote its fraction field by ${\mathcal Q}$. Also fix two tuples of indeterminates: $\underline{T}=(T_1,\dots,T_k)$ ($k \geq 0$), and $\underline{Y}=(Y_1,\dots,Y_n)$ ($n \geq 0$), with $k=0$ or $n=0$ meaning that the corresponding tuple is empty. We call the $T_i$ the \emph{parameters}; they are to be specialized, unlike the $Y_i$ which we call the \emph{variables}.

\section{Locally Schinzel rings and integrally Hilbertian rings}\label{sec:presen}

After some preliminary definitions in \S \ref{ssec:preliminaries}, integrally Hilbertian rings are introduced in \S \ref{ssec:int-Hilb-ring}, and locally Schinzel rings in \S \ref{ssec:Schinzel_ring}. In \S \ref{ssex:main_results)}, we state and comment on Theorem \ref{thm-integ-Hilb}, which gen\-eralizes
Theorem \ref{thm:main1}. \S \ref{ssec:counter-examples} provides examples of Hilbertian but \hbox{not integrally Hilbertian rings.}

\subsection{Preliminaries} \label{ssec:preliminaries}

\subsubsection{Ring theory}\label{ssec:krulldom}

Our main criterion (Theorem \ref{thm-integ-Hilb}) for a domain ${\mathcal Z}$ to be locally Schinzel or integrally Hilbertian includes ${\mathcal Z}$ being either a \emph{Krull domain} or a \emph{near UFD}. We recall below basic facts about these classes of rings. 
\vskip 0,5mm

Given a domain ${\mathcal Z}$, a proper ideal $\mathfrak{a}\subset \mathcal{Z}$ is \emph{primary} if for every $x,y \in \mathcal{Z}$, $x y \in \mathfrak{a}$ implies $x \in \mathfrak{a}$ or $y^m$ for some integer $m \geq 1$; and set 
$$
{\rm Spec}^{(1)}(\mathcal Z)=\{ \mathfrak{p} \in {\rm Spec} \hskip 1pt \mathcal Z \mid {\rm ht}(\mathfrak{p})=1\},
$$
where ${\rm ht}(\cdot)$ denotes the height.

\begin{defn}\label{def:krulring}
A domain $\mathcal Z$ is called a \emph{Krull domain} if the following conditions hold:
\begin{enumerate}
\item $A_{\mathfrak{p}}$ is a discrete valuation ring for all $\mathfrak{p} \in \s1(\mathcal Z)$; 
\item Every non-zero principal ideal $\langle a  \rangle$ of $\mathcal Z$ is the intersection of finitely many height-one primary ideals.
\end{enumerate}
\end{defn}

\begin{rmk} \label{rmk:Krull-domain}
(a) Every UFD is a Krull domain: merely note that every height-one prime ideal in a UFD is generated by a prime element.
\vskip 1mm

\noindent
(b) Every integrally closed Noetherian domain is a Krull domain. This follows from \cite[\S 17.H, Theorem 37, Theorem 38]{Mat80}.
\vskip 1mm

\noindent
(c) It is clear from the definition that being a Krull domain is preserved by localization (by any multiplicative subset). From the Mort--Nagata Integral Closure Theorem \cite[\S 41.B]{Mat80}, so it is by taking integral closure in finite extensions of the fraction field ${\mathcal Q}$. 
\end{rmk}

Assume that $\mathcal Z$ is a Krull domain. For each $\mathfrak{p} \in \s1(\mathcal Z)$, denote by $v_{\mathfrak{p}}$ the corresponding normalized valuation on $\mathcal{Z}_{\mathfrak{p}}$, which extends uniquely to ${\rm Frac}(\mathcal Z )$. For $n \in \N$, let 
$$
\mathfrak{p}^{(n)}=\mathfrak{p}^n\mathcal{Z}_{\mathfrak{p}} \cap \mathcal{Z}
$$ be the \emph{$n$-th symbolic power} of $\mathfrak{p}$.


\begin{lem}[Weak Approximation Theorem]\label{lem:weakappro}
Let ${\mathcal Z}$ be a Krull domain. Let $\mathcal{S}=\{\mathfrak{p}_1,\dots,\mathfrak{p}_r\} \subset \s1(\mathcal{Z})$ be a finite set, let $x_1,\dots,x_r \in \mathcal{Z}$, and let $n_1,\dots,n_r \in \N$. Then there exists $x \in \mathcal{Z}$ such that
$$
v_{\mathfrak{p}_i}(x-x_i)=n_i\quad \textrm{for all } 1 \leq i \leq r.
$$ 
\end{lem}

\begin{proof}
This result should be standard, but we include a proof due to the lack of a direct reference. By \cite[Page 289, V)]{Mat80}, the valuations $v_{\mathfrak{p}_1},\dots,v_{\mathfrak{p}_r}$ are independent. Classically then, an element $x$ satisfying the required conclusion can be chosen in $\mathcal{Q}$.  By \cite[Proposition 4.4]{Jar91} and the construction given in its proofs, one can indeed choose $x$ to lie in $\mathcal{Z}$. 
\end{proof}

See \cite[Section 41, Chapter 13]{Mat80} for more on Krull domains.

\begin{defn} \label{def:near-UFD}
A \emph{near UFD} is a domain such that 
\vskip 0,8mm

\noindent
(\ref{def:near-UFD}-1) \emph{every nonzero element has finitely many prime divisors (modulo units), and} 
\vskip 1mm

\noindent
(\ref{def:near-UFD}-2) \emph{every nonunit has at least one prime divisor} (in particular irreducibles are prime).
\vskip 1mm

\noindent
We also call \emph{PDF ring} (for ``Prime Divisor Finite'') a domain
that solely satisfies (\ref{def:near-UFD}-1).
\end{defn}

For more on near UFDs, see \cite[\S 2.3]{BDKN22} where this definition is introduced. 


 \begin{rmk}\label{rmk:finmanydiv}
(a) Krull domains, Noetherian rings and near UFDs are PDF rings. It is obvious for near UFDs. For the first two, note first that both satisfy the Ascending Chain Condition on Principal ideals (ACCP) (obvious for Noetherian, whereas the Krull property is shown in \cite[Proposition 2.2]{AAZ1990} to imply the so-called ``bounded factorization'' property, which in turn implies ACCP).  Secondly, ACCP implies PDF: namely, ACCP easily gives that each nonzero element $a$ is a product of finitely many irreducibles; any prime divisor of $a$ then must divide one of these irreducibles.

\vskip 1mm

\noindent
(b) Clearly, UFDs are near UFDs. Conversely near UFDs satisfying ACCP must be UFD. 
\vskip 1mm

\noindent
(c) \emph{The two classes of Krull domains and near UFDs are not contained in each other}. Indeed it follows from (b) that rings that are both Krull and near UFDs are UFD. As there are Krull domains that are not UFD (every ring of integers of a number field of class number $>1$), and near UFDs that are not UFD (\hbox{e.g.} \cite[Example 2.4]{BDKN22}), 
the claim follows. (Similarly the two classes of Noetherian rings and near UFDs are not contained in each other; and the same is true for Noetherian rings and Krull domains).
\end{rmk}







\subsubsection{Fixed divisor}

\begin{defn}
Let $P \in {\mathcal Z}[\underline{T},\underline{Y}]$ be a polynomial. A proper ideal ${\frak p}\subset {\mathcal Z}$ is called a \emph{fixed divisor} of $P$ \hbox{w.r.t.} $\underline{T}$ if
$P(\underline{t},\underline{Y}) \equiv 0 \pmod{{\frak p}}$
for every $\underline{t} \in {\mathcal Z}^k$.
The set of all fixed divisors of $P$ is denoted by $\mathcal{F}_{\underline{T}}(P)$. 
\end{defn}

When $k=0$, a fixed divisor ${\frak p}$ is merely an (ideal) divisor of $P(\underline{Y})$ in ${\mathcal Z}$ (in the sense that $P(\underline{Y}) \equiv 0 \pmod{{\frak p}}$). For clarity, we use a different notation, ${\rm Div}_{\mathcal{Z}}(P)$, for the set of all {proper} \emph{ideal divisors} of $P$. Also note that for $P\in {\mathcal Z}[\underline T,\underline Y]$, we have
${\rm Div}_{\mathcal{Z}}(P) \subset \mathcal{F}_{\underline{T}}(P)$.

\begin{lem} \label{lem:fixed-divisors-removal}
Assume that $\mathcal{Z}$ is a Krull domain or a near UFD. Let $
P(\underline{T},\underline{Y}) \in \mathcal{Z}[\underline{T},\underline{Y}]$ be a nonzero polynomial. Then there exists a nonzero element $\varphi \in \mathcal{Z}$ such that $P$ has no fixed  divisors \hbox{w.r.t.} $\underline{T}$ among the proper principal ideals of $\mathcal{Z}[1/\varphi]$.
\end{lem}

For near UFDs, the result is proved in \cite[\S 4.3.1]{BDKN22}. The case of Krull domains is handled in Remark \ref{rmk:fixed-div-finite}.

\begin{rmk}[How to find fixed divisors]\label{rmk:no-fixed-divisor} 
Observe first that a nonunit $a\in {\mathcal Z}$ is a fixed divisor of some polynomial $P\in {\mathcal Z}[\underline T,\underline Y]$ \hbox{w.r.t.} ${\underline T}$ if and only if each coefficient $c(\underline T)$ of $P$ (viewed as a polynomial in ${\underline Y}$) does, \hbox{i.e.} satisfies $c(\underline t) \equiv 0 \pmod{a}$ for every ${\underline t}\in {\mathcal Z}^k$. Then, for example with ${\mathcal Z} = {\mathbb Z}$, 
one can reduce to the case $a=p$ is a prime; and the previous condition holds if and only if $c(\underline T)$ lies in the ideal $\langle p, T_1^p-T_1,\ldots,T_k^p-T_k \rangle \subset {\mathcal Z}[\underline T]$ \cite[Lemme 1]{terjanian66}. In particular, such primes $p$ can be bounded in terms of the partial degrees of $P$ \cite[\S 3.1]{BDKN22}. Evaluation of $c(\underline{T})$ at some points $\underline t\in {\mathcal Z}^k$ may then, in practice, yield the no fixed divisor assumption; an obvious situation is when $c(\underline{T})$ is a unit in ${\mathcal Z}$. 
\end{rmk}

\subsubsection{Primality type} \label{ssec:primality-typ}
In some situations, 
there may be some interest to obtain that a specialized polynomial $P(\underline t,\underline Y)$, instead of having ``no nonunit divisor in ${\mathcal Z}$'', has ``no \emph{prime ideal divisor} ${\frak p}\subset {\mathcal Z}$'' -- \hbox{i.e.}, that \emph{$P(\underline t,\underline Y)\not\equiv 0$ modulo every prime ideal ${\frak p}\subset {\mathcal Z}$}. But then one needs to assume that $P$ has no ``\emph{fixed prime ideal divisor}'' -- \hbox{i.e.}, that \emph{for no prime ideal ${\frak p}\subset {\mathcal Z}$, we have 
$P(\underline{t},\underline{Y}) \equiv 0 \pmod{{\frak p}}$ for every $\underline{t} \in {\mathcal Z}^k$}. This creates a variant of the notion of integrally Hilbertian rings. Other possibly valuable variants appear from other choices of sets of ideals. 
\vskip 1mm

Our main variants will be for the following sets of ideals. Our original variant is the first one.

\begin{itemize}
\item ${\rm Nonunit}\hskip 1pt {\mathcal Z}$: the set of all \emph{nonzero principal proper} ideals $\langle a\rangle \subset {\mathcal Z}$ (generated by nonunits $a$, $a\not=0$).
\item ${\rm Spec}^\ast\hskip 1pt {\mathcal Z}$: the set of all \emph{nonzero prime} ideals of ${\mathcal Z}$.
\item ${\rm Irred}\hskip 1pt {\mathcal Z}$: the set of all \emph{principal} ideals $\langle a\rangle$ generated by \emph{irreducible} elements $a\in {\mathcal Z}$.
\item ${\rm Prime}\hskip 1pt {\mathcal Z}$: the set of all \emph{nonzero principal prime} ideals $\langle a\rangle\subset {\mathcal Z}$ (generated by prime elements $a$).
\end{itemize}

\noindent
When there is no risk of confusion on the ring ${\mathcal Z}$, we omit ${\mathcal Z}$ in the notation; for example, we write ${\rm Prime}$ for ${\rm Prime}\hskip 1pt {\mathcal Z}$.
Also, as is usual, we often identify principal ideals $\langle a\rangle$ with elements $a\in {\mathcal Z}$ modulo units of ${\mathcal Z}$. 

\begin{rmk} \label{rem:PID-context}
When ${\mathcal Z}$ is a PID, the three sets ${\rm Spec}^\ast$,
${\rm Irred}$ and ${\rm Prime}$ are equal and every ideal in ${\rm Nonunit}$ uniquely factors as a nonempty product of ideals in ${\rm Prime}$. 
\end{rmk}

\begin{exa}\label{exa:examfixide} Here is an example where ${\mathcal Z}$ is the ring of integers of a number field and $P(T,Y)\in {\mathcal Z}[T,Y]$ is a polynomial that has no fixed divisor in ${\mathcal P}= {\rm Nonunit}$ but has some in ${\mathcal P}= {\rm Spec}^\ast$. Take $\mathcal Z=\Z[\sqrt{-6}]$. It is the ring of integers of $\Q(\sqrt{-6})$. Consider the polynomial
\vskip 1mm

\centerline{$P(T,Y)=(T^2-T)Y+(T^2-T+2) \in \mathcal Z[T,Y]
$.}
\vskip 1mm

\noindent
Clearly
$
(2)= \mathfrak{p}^2
$ for $\mathfrak{p}=(2,\sqrt{-6})$. Since the norm of $\mathfrak{p}$ is $2$, $\mathfrak{p}$ is a fixed prime ideal divisor of $P$ \hbox{w.r.t.} $T$. Assume on the contrary that $\pi$ is a nonunit fixed divisor of $P$ \hbox{w.r.t.} $T$. Then $\pi$ divides $(-1)^2-(-1)=2$, and so $\pi$ is associate with $2$, as $2$ is irreducible. However $2$ does not divide
\vskip 1mm

\centerline{$(1+\sqrt{-6})^2-(1+\sqrt{-6})=-6+\sqrt{-6},
$}
\vskip 1mm

\noindent
a contradiction.
\end{exa}

More generally, we call \emph{primality type} any subset ${\mathcal P}$ of proper ideals of ${\mathcal Z}$. In the core Sections \ref{sec:presen} and  \ref{sec:proof-main-lemma}, our definitions and results are stated \hbox{w.r.t.} a given primality type ${\mathcal P}$.
The primality type ${\mathcal P} = {\rm Nonunit}$ remains the main case of interest. It is also ${\mathcal P} = {\rm Nonunit}$ that is tacitly meant when there is no explicit reference to a primality type (as in Sections \ref{sec:intro}, \ref{sec:poly-rings}, \ref{sec:applications}).

\subsection{Integrally Hilbertian ring} \label{ssec:int-Hilb-ring} Fix an integral domain ${\mathcal Z}$ and a primality type ${\mathcal P}$. Given $s$ polynomials $P_1,\ldots,P_s \in {\mathcal Z}[\underline{T},\underline{Y}]$, consider the following conditions, which serve both in the hypotheses and in the conclusions of our main results. 
\vskip 2mm

\noindent
(Irred$/{\mathcal Q}(\underline T)$) \hskip 10mm The polynomials $P_1,\ldots,P_s$ are irreducible in ${\mathcal Q}(\underline T)[\underline Y]$. 
\vskip 1mm

\noindent
(Prim$/{\mathcal Q}[\underline T]$) \hskip 11mm The polynomials $P_1,\ldots,P_s$ are primitive \hbox{w.r.t.} ${\mathcal Q}[\underline T]$.\footnote{For a UFD ${\mathcal Z}$, a polynomial $P\in {\mathcal Z}[\underline Y]$ is \emph{primitive} \hbox{w.r.t.} ${\mathcal Z}$ if its coefficients are coprime, \hbox{i.e.}, if they have no nonunit common divisor.}
\vskip 1mm

\noindent
$\hbox{\rm (NoFixDiv}/{\mathcal Z}[\underline T])_{\mathcal P}$ \hskip 0,5mm
The product $P_1\cdots P_s$ has no fixed divisor in ${\mathcal P}{\mathcal Z}$ \hbox{w.r.t.} ${\underline T}$. 
\vskip 2mm

The three depend on the $k$-tuple ${\underline T}$ of parameters. The third condition $\hbox{\rm (NoFixDiv}/{\mathcal Z}[\underline T])_{\mathcal P}$ also depends on the primality type ${\mathcal P}$. The answers to our main questions: is a ring ${\mathcal Z}$ integrally Hilbertian? a Schinzel ring? as subsequently defined, depend on ${\mathcal P}$.

Condition (Irred$/{\mathcal Q}(\underline T)$) includes $\deg_{\underline Y}(P_i)\geq 1$, $i=1,\ldots,s$. Conditions (Irred$/{\mathcal Q}(\underline T)$) and (Prim$/{\mathcal Q}[\underline T]$) joined together are equivalent to the polynomials $P_1,\ldots,P_s$ being irreducible in ${\mathcal Q}[\underline T,\underline Y]$ and of degree $\geq 1$ in ${\underline Y}$. When $k=0$, the second condition (Prim$/{\mathcal Q}$) merely means that 
the polynomials $P_1,\ldots,P_s\in {\mathcal Q}[\underline Y]$ are nonzero.



\begin{defn} \label{def:integrally-hilbertian}
The ring ${\mathcal Z}$ is said to be \emph{integrally Hilbertian \hbox{w.r.t.} the primality type ${\mathcal P}$} if for any integers $k,n,s \geq 1$ and any polynomials $P_1,\ldots,P_s \in {\mathcal Z}[\underline{T},\underline{Y}]$ satisfying the three hypotheses (Irred$/{\mathcal Q}(\underline T)$), (Prim$/{\mathcal Q}[\underline T]$) and $\hbox{\rm (NoFixDiv}/{\mathcal Z}[\underline T])_{\mathcal P}$, the subset
  \vskip 2mm

\noindent
(\ref{def:integrally-hilbertian})  \hskip 3mm ${\mathcal H}_{\mathcal{P}}(P_1,\ldots,P_s)
= \left\{\underline m\in {\mathcal Z}^k \hskip 1pt \left\vert\hskip 1pt \begin{array}{c}
     \hbox{\rm the polynomials $P_1(\underline m,\underline Y),\ldots,P_s(\underline m,\underline Y)\in {\mathcal Z}[\underline Y]$ } \\
    \hskip 1mm \hbox{\rm satisfy conditions (Irred$/{\mathcal Q}$) and $\hbox{\rm (NoFixDiv}/{\mathcal Z})_{\mathcal P}$.} \\
    \end{array}\right. \right\} 
    \footnote{The polynomials $P_1(\underline m,\underline Y),\ldots,P_s(\underline m,\underline Y)$ are in ${\mathcal Z}[\underline Y]$; they have no more parameters $T_i$. As specified above, condition $\hbox{\rm (NoFixDiv}/{\mathcal Z})_{\mathcal P}$ means that they have no ideal divisor in ${\mathcal P}$.}$

\vskip 2mm
\noindent
is Zariski-dense in $\A^k({\mathcal Q})$.
\end{defn}

\begin{rmk}[Irreducibility in \hbox{${\mathcal Z}[\underline Y]$} of the specialized polynomials] \label{remiiredZvsQ1}
If a $k$-tuple $\underline m$ is in the set ${\mathcal H}_{\mathcal{P}}(P_1,\ldots,P_s)$ and the primality type  is ${\mathcal P} = {\rm Nonunit}$, then the specialized polynomials are irreducible in ${\mathcal Z}[\underline Y]$: indeed, from (Irred$/{\mathcal Q}$), they cannot have a nontrivial divisor in ${\mathcal Z}[\underline Y]$ of positive degree, and due to $\hbox{\rm (NoFixDiv}/{\mathcal Z})_{\mathcal P}$, they cannot have a nonunit divisor $a\in{\mathcal Z}$. The same conclusion holds with ${\mathcal P} = {\rm Irred}$ if ${\mathcal Z}$ additionally assumed to be Noetherian (indeed, in a Noetherian ring, every nonunit is divisible by an irreducible). It does too for ${\mathcal P} = {\rm Prime}$ if ${\mathcal Z}$ is a near UFD.
The same question is not clear for ${\mathcal P} = {\rm Spec}^\ast$. If ${\mathcal Z}$ is a UFD, then irreducibility over ${\mathcal Z}$ is \emph{equivalent} to irreducibility over ${\mathcal Q}$ joint with nonexistence of nonunit divisors in ${\mathcal Z}$.
\end{rmk}

The definition of an integrally Hilbertian ring given in Definition \ref{def:int-Hilb} corresponds to the situation of Definition \ref{def:integrally-hilbertian} that the primality type is ${\mathcal P}={\rm Nonunit}$, and in turn, this original definition corresponds to the condition from \cite{BDKN22} that the so-called \emph{Hilbert--Schinzel specialization property} holds for any integers $k,n,s \geq 1$.
\vskip 1mm

Proposition \ref{intH=>Hring} and Proposition \ref{prop:schinzel+Hil=intH} relate the notions of \emph{integrally Hilbertian rings} and of \emph{Hilbertian rings}.
The preliminary Remark \ref{remiiredZvsQ}(a) is aimed at dissipating some  possible confusion due to some subtelty in the literature. Remark \ref{remiiredZvsQ}(b) provides some details on a ``well-known to experts'' property of Hilbertian rings used in Section \ref{sec:intro}.

\begin{rmk}[Complements on Hilbertian rings and Hilbertian fields] \label{remiiredZvsQ} \hskip 1mm
\vskip 0,5mm

\noindent
(a) 
\cite[\S 13.1]{FJ23} defines the Hilbert specialization property satisfied by Hilbertian fields and Hilbertian rings as the special case of our definition (Definition \ref{def:Hilb-ring-field}) for which $n=1$ and $P_1,\ldots,P_s$ are separable as polynomials in $Y$. Our a priori stronger version of the property is equivalent under the \emph{imperfectness assumption} that ${\mathcal Q}$ is of characteristic $0$ or imperfect; for Hilbertian fields, we refer to \cite[Prop.13.4.3]{FJ23}, and for Hilbertian rings to \cite[Prop.4.2]{BDN20b}.
For simplicity, we incorporated this assumption in our definition of Hilbertian rings and fields, thus avoiding any risk of confusion. There is no loss in doing so: Proposition \ref{intH=>Hring} shows  that integrally Hilbertian rings satisfy the imperfectness assumption.
\vskip 1mm

\noindent
(b) As asserted in Section \ref{sec:intro}, \emph{if ${\mathcal Z}$ is a Hilbertian ring, then the same holds for}
\vskip 0,5mm

\noindent
(b-1) \emph{any localization by some multiplicative subset $S\subset {\mathcal Z}$}, and,
\vskip 0,5mm

\noindent
(b-2) \emph{any integral closure ${\mathcal Z}^\prime_L$ of ${\mathcal Z}$ in a finite extension ${\mathcal L}/{\mathcal Q}$ of the fraction field.} 
\vskip 1mm

\noindent
Assertion (b-1) is easy since ${\rm Frac}(S^{-1}{\mathcal Z})={\mathcal Q}$ and ${\mathcal Z}\subset S^{-1}{\mathcal Z}$. 
Concerning (b-2), it is  clear if ${\mathcal L}/{\mathcal Q}$ is separable from \cite[Cor.13.2.3]{FJ23}. The case that ${\mathcal L}/{\mathcal Q}$ is inseparable can be deduced from an adjusted version of \cite[Prop.13.3.6(b)]{FJ23} that replaces the phrase ``\emph{Hilbert subsets of $K$ (\hbox{\rm resp.} of $L$) are non empty}'' by ``\emph{Hilbert subsets of $K$ (\hbox{\rm resp.} of $L$) contain elements of ${\mathcal Z}$ (\hbox{\rm resp.} of ${\mathcal Z}^\prime_L$)}'' in the assumption (\hbox{resp.} in the conclusion) of the statement. The main change in the proof is to observe that the element $\gamma\in {\mathcal L}$ from the proof of \cite[Lemma 13.3.5]{FJ23} can be found in ${\mathcal Z}^\prime_L$. Other places of the proof of \cite[Prop.13.3.6(b)]{FJ23} use elements of Hilbert sets of $K$. These should be picked in ${\mathcal Z}$, as our modified assumption allows. 
\end{rmk}

\begin{prop} \label{intH=>Hring}
Let ${\mathcal Z}$ be an integrally Hilbertian ring \hbox{w.r.t.} a primality type ${\mathcal P}$. Then ${\mathcal Z}$ is of characteristic $0$ or imperfect, 
and is a Hilbertian ring.
\end{prop}

\begin{proof}
If $p={\rm char}({\mathcal Z})>0$, the polynomial $Y^p-T$ is irreducible in ${\mathcal Q}[T,Y]$ and has no fixed divisor \hbox{w.r.t.} ${\underline T}$ (with respect to any primality type). Hence, by the integrally Hilbertian property, there exists $t\in {\mathcal Z}$ such that $t\notin {\mathcal Q}^p$. Hence ${\mathcal Z}$ is imperfect.

To show that ${\mathcal Z}$ is a Hilbertian ring, let $P_1,\ldots,P_s \in \mathcal{Q}[\underline{T},Y]$ be $s\geq 1$ irreducible polynomials of degrees $\geq 1$ in the single variable $Y$, and separable in $Y$ (Remark \ref{remiiredZvsQ}(a) explains why reduction to such polynomials is legitimate). Furthermore, from \cite[Lemma 13.1.6]{FJ23}, one may assume that $s=1$. Set $P=P_1$ and let $d=\deg_Y(P)$. Let $c(\underline{T}) \in \mathcal{Z}[\underline{T}]$ denote the leading coefficient of $P$ when viewed as a polynomial in $Y$. Consider the polynomial 
$$
Q(\underline{T},Y)=c^{d-1}P(\underline{T},Y/c) \in \mathcal{Z}[\underline{T},Y].
$$ Clearly, $Q$ is monic, and hence has no fixed divisors w.r.t $\underline{T}$. Moreover, $Q$ remains irreducible in $\mathcal{Q}[\underline{T},Y]$. Since $\mathcal{Z}$ is integrally Hilbertian, there exists a Zariski-dense set of $\underline{m} \in \mathcal{Z}^k$ such that $Q(\underline{m},Y)$, and so $P(\underline{m},Y)$ too, is irreducible in $\mathcal{Q}[Y]$.    
\end{proof}

\subsection{Locally Schinzel ring} \label{ssec:Schinzel_ring} 
We introduce a new property: being a locally Schinzel ring (Definition \ref{def:schinzel-ring}) and show that this property guarantees that a Hilbertian ring
be integrally Hilbertian (Proposition \ref{prop:schinzel+Hil=intH}). 
In the definition, we slightly abuse terminology  and call {\it arithmetic progression} of ${\mathcal Z}$ every subset of ${\mathcal Z}$ of the form $\omega {\mathcal Z} + \alpha$ with $\omega, \alpha \in {\mathcal Z}$, $\omega \not= 0$.

\begin{defn} \label{def:schinzel-ring}
The ring ${\mathcal Z}$ is said to be a \emph{locally Schinzel ring \hbox{w.r.t.} the primality type ${\mathcal P}$}  if the following conclusion holds for any integers $k,n,s \geq 1$ and any polynomials $P_1,\ldots,P_s \in {\mathcal Z}[\underline{T},\underline{Y}]$ satisfying the two hypotheses (Prim$/{\mathcal Q}[\underline T]$) and $\hbox{\rm (NoFixDiv}/{\mathcal Z}[\underline T])_{\mathcal P}$ (from Section \ref{ssec:int-Hilb-ring}):
  \vskip 1mm

\noindent
(\ref{def:schinzel-ring})  \hskip 1mm Letting $\underline{T}'=(T_2,\dots,T_k)$, there is an arithmetic progression $\tau=(\omega \ell + \alpha)_{\ell \in {\mathcal Z}} \subset {\mathcal Z}$ ($\omega,\alpha \in {\mathcal Z},\omega \neq 0$) such that, for all but finitely many
$t_1 \in \tau$, the polynomials
$$
P_1(t_1,\underline{T}',\underline{Y}),\dots,P_s(t_1,\underline{T}',\underline{Y}) \in {\mathcal Z}[\underline{T}',\underline{Y}]
$$
satisfy (Prim$/{\mathcal Q}[\underline T^\prime]$) and $\hbox{\rm (NoFixDiv}/{\mathcal Z}[\underline T^\prime])_{\mathcal P}$.
\end{defn}

Theorem \ref{thm-integ-Hilb} gives concrete examples of locally Schinzel rings.

\begin{rmk} \label{rmk:int-H-coprimeS}
Being a locally Schinzel ring relates to the following property called \emph{coprime Schinzel Hypothesis} in \cite{BDKN22}: 
\vskip 1mm

\noindent
(*) \emph{For any $k\geq 1$, $\ell \geq 2$ and any nonzero polynomials $Q_1,\ldots,Q_\ell \in {\mathcal Z}[\underline{T}]$, coprime in ${\mathcal Q}[\underline T]$ and such that no nonunit $p\in {\mathcal Z}$ divides all values $Q_{1}(\underline{m}),\dots,Q_{\ell}(\underline{m})$ with $\underline m\in {\mathcal Z}^k$, there exists $\underline m= (m_1,\ldots,m_{k})\in {\mathcal Z}^{k}$ such that $Q_1(\underline m), \ldots, Q_\ell(\underline m)$ have no common divisor in ${\rm Nonunit}\hskip 1pt\mathcal{Z}$.}
\vskip 0,5mm

\vskip 1mm

\noindent
Locally Schinzel rings (\hbox{resp.} integrally Hilbertian rings) satisfy the coprime Schinzel Hypothesis: given $Q_1,\ldots,Q_\ell$ as above, apply the locally Schinzel property one parameter after another (\hbox{resp.} the integrally Hilbertian property all parameters at once) to the polynomial $Q_1Y_1\cdots + Q_\ell Y_\ell$.  
\end{rmk}

The point of the locally Schinzel property is that it links the Hilbertian ring and integrally Hilbertian ring notions. 

\begin{prop} \label{prop:schinzel+Hil=intH}
Let ${\mathcal Z}$ be a domain 
and  ${\mathcal P}$ be a primality type. 
If ${\mathcal Z}$ is both a locally Schinzel ring \hbox{w.r.t.} ${\mathcal P}$ and a Hilbertian ring, then it is integrally Hilbertian \hbox{w.r.t.} ${\mathcal P}$. 
\end{prop}


\begin{proof} Let $P_1,\ldots,P_s \in {\mathcal Z}[\underline{T},\underline{Y}]$ be $s$ polynomials satisfying the three conditions (Irred$/{\mathcal Q}(\underline T)$), (Prim$/{\mathcal Q}[\underline T]$) and $\hbox{\rm (NoFixDiv}/{\mathcal Z}[\underline T])_{\mathcal P}$. Set $\underline{T}'=(T_2,\dots,T_k)$. As ${\mathcal Z}$ is a locally Schinzel ring, there is an arithmetic progression $\tau=(\omega \ell + \alpha)_{\ell \in {\mathcal Z}} \subset {\mathcal Z}$ ($\omega,\alpha \in {\mathcal Z},\omega \neq 0$) such that, for all $\ell \in {\mathcal Z}$ but in a finite set $F$, the polynomials
$$
P_1(\omega \ell +\alpha,\underline{T}',\underline{Y}),\dots,P_s(\omega \ell +\alpha,\underline{T}',\underline{Y}) \in {\mathcal Z}[\underline{T}',\underline{Y}]
$$
satisfy (Prim$/{\mathcal Q}[\underline T^\prime]$) and $\hbox{\rm (NoFixDiv}/{\mathcal Z}[\underline T^\prime])_{\mathcal P}$. Consider then the polynomials 
$$
P_1(\omega T_1 +\alpha,\underline{T}',\underline{Y}),\dots,P_s(\omega T_1 +\alpha,\underline{T}',\underline{Y}) \in {\mathcal Z}[\underline{T},\underline{Y}].
$$

\noindent
They are irreducible in ${\mathcal Q}(\underline{T})[\underline{Y}]$. By \cite[Proposition 4.2]{BDN20b}, it follows from ${\mathcal Z}$ being a Hilbertian ring (including the imperfectness assumption) that there is an infinite subset $H\subset {\mathcal Z}\setminus F$, such that for each $\ell \in H$, the polynomials

$$
P_1(\omega \ell +\alpha,\underline{T}',\underline{Y}),\dots,P_s(\omega \ell +\alpha,\underline{T}',\underline{Y}) \in {\mathcal Z}[\underline{T}',\underline{Y}]
$$

\noindent
are irreducible in ${\mathcal Q}(\underline{T}')[\underline{Y}]$, \hbox{i.e.}, they satisfy condition (Irred$/{\mathcal Q}(\underline T^\prime)$). Thus they satisfy the three conditions (Irred$/{\mathcal Q}(\underline T^\prime)$), (Prim$/{\mathcal Q}[\underline T^\prime]$), $\hbox{\rm (NoFixDiv}/{\mathcal Z}[\underline T^\prime])_{\mathcal P}$ and the same argument can be applied to them. Repeating inductively the argument  to each of the parameters $T_2,\ldots,T_k$ leads to the requested condition that the set ${\mathcal H}_{\mathcal{P}}(P_1,\ldots,P_s)$ is Zariski-dense. \end{proof}

\subsection{Main results} \label{ssex:main_results)}
The following statement provides our main examples of locally Schinzel rings and of integrally Hilbertian rings. 

\begin{thm} \label{thm-integ-Hilb} 
{\rm (a)} For  a ring ${\mathcal Z}$ as in first row and a primality type ${\mathcal P}$ as in first column, we have the following answers to the question of whether ${\mathcal Z}$ is a locally Schinzel ring \hbox{w.r.t.} ${\mathcal P}$:
\vskip 3mm

\begin{center}
\begin{tabular}{ |c|c|c|c|c| } 
  \hline
   & {\rm Krull} &  {\rm near UFD} & {\rm PDF}\\ 
    \hline
   ${\rm Nonunit}$ & Yes & Yes  & No\\
  \hline
   ${\rm Irred}$&  Yes & Yes  & No\\ 
  \hline
  ${\rm Prime}$&  Yes & Yes & Yes \\
  \hline

\end{tabular}
\end{center}
\vskip 3mm 


\noindent
{\rm (b)} Assume further that ${\mathcal Z}$ is a Hilbertian ring (of characteristic $0$ or imperfect). Then the table above provides the same answers to the question of whether ${\mathcal Z}$ is integrally Hilbertian \hbox{w.r.t.} ${\mathcal P}$.
\end{thm}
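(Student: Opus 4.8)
The plan is to establish part (a) and then read off part (b). The "Yes" entries of part (b) follow immediately from Proposition \ref{prop:schinzel+Hil=intH}, since a ring that is both a Schinzel ring \hbox{w.r.t.} ${\mathcal P}$ and a Hilbertian ring of characteristic $0$ or imperfect is integrally Hilbertian \hbox{w.r.t.} ${\mathcal P}$. For the two "No" entries I would exhibit a Hilbertian half near UFD ${\mathcal Z}$ possessing a nonunit, resp.\ an irreducible non-prime element, having no prime divisor below it, together with polynomials $P_1,\ldots,P_s$ satisfying (Irred$/{\mathcal Q}(\underline T)$), (Prim$/{\mathcal Q}[\underline T]$), $\hbox{\rm (NoFixDiv}/{\mathcal Z}[\underline T])_{\mathcal P}$ for which every specialization $(P_1\cdots P_s)(\underline m,\underline Y)$ acquires a (varying) ideal divisor in ${\mathcal P}$; then ${\mathcal H}_{\mathcal P}(P_1,\ldots,P_s)=\varnothing$ is not Zariski dense, and the same family shows ${\mathcal Z}$ is not a Schinzel ring \hbox{w.r.t.} ${\mathcal P}$, giving the "No" entries of part (a) as well.

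For the "Yes" entries of part (a), I would first reduce to $s=1$: since ${\mathcal Q}[\underline T]$ is a UFD, Gauss's Lemma makes the content multiplicative, so $(P_1,\ldots,P_s)$ satisfies (Prim$/{\mathcal Q}[\underline T]$) \hbox{iff} $P:=P_1\cdots P_s$ does — before and after specializing $T_1$ — while $\hbox{\rm (NoFixDiv}/{\mathcal Z}[\underline T])_{\mathcal P}$ already concerns $\mathcal{F}_{\underline T}(P)$. Thus it suffices to prove the one-parameter descent for a single $P\in{\mathcal Z}[\underline T,\underline Y]$ that is primitive \hbox{w.r.t.} ${\mathcal Q}[\underline T]$ and has no fixed divisor \hbox{w.r.t.} $\underline T$ in ${\mathcal P}$: there is an arithmetic progression $\tau\subset{\mathcal Z}$ such that for all but finitely many $t_1\in\tau$, the polynomial $P(t_1,\underline T',\underline Y)$ is primitive \hbox{w.r.t.} ${\mathcal Q}[\underline T']$ and has no fixed divisor \hbox{w.r.t.} $\underline T'$ in ${\mathcal P}$; iterated over $T_1,\ldots,T_k$ this is precisely the Schinzel ring property. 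Writing $P=\sum_{\underline j}a_{\underline j}(\underline T)\underline Y^{\underline j}$, the primitivity clause is the easy half: the $a_{\underline j}$ being coprime in ${\mathcal Q}[\underline T]$, they remain coprime in ${\mathcal Q}(\underline T')[T_1]$, and a B\'ezout relation shows that only finitely many $t_1\in{\mathcal Q}$ can make the $a_{\underline j}(t_1,\underline T')$ acquire a nonconstant common factor in ${\mathcal Q}[\underline T']$ or make $P(t_1,\underline T',\underline Y)$ drop its $\underline Y$-degree to $0$.

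The substance is the no-fixed-divisor clause, which I organize around the two conditions of Definition \ref{def:schproide}. \emph{Finiteness:} picking a constant vector $\underline c\in{\mathcal Z}^{k-1}$ at which a suitable nonvanishing holds so that the one-variable polynomials $a_{\underline j}(T_1,\underline c)\in{\mathcal Z}[T_1]$ are coprime in ${\mathcal Q}[T_1]$, a B\'ezout relation over ${\mathcal Q}(T_1)$ cleared of denominators shows that the ideal $\langle a_{\underline j}(t_1,\underline c):\underline j\rangle$ of ${\mathcal Z}$ contains a fixed nonzero $\delta$ for \emph{every} $t_1$; hence any ${\mathfrak p}\in{\rm Prime}$ (resp.\ ${\mathfrak p}\in{\rm Spec}$, in the Dedekind case) that is a fixed divisor of $P(t_1,\underline T',\underline Y)$ \hbox{w.r.t.} $\underline T'$ for some $t_1$ must contain $\delta$, leaving only finitely many such ${\mathfrak p}$ by axiom (\ref{def:near-UFD}-1) (valid even for half near UFDs), resp.\ by Noetherianity. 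This bounds the dangerous set $\{{\mathfrak p}_1,\ldots,{\mathfrak p}_r\}$ uniformly in $t_1$ and in $\underline t'\in{\mathcal Z}^{k-1}$. \emph{Synthesis:} since ${\mathfrak p}_i\notin{\rm Div}_{\mathcal Z}(P)$, the residues $\bar t_1$ that make ${\mathfrak p}_i$ a fixed divisor of $P(t_1,\underline T',\underline Y)$ form a proper subset of ${\mathcal Z}/{\mathfrak p}_i$ — the common zeros of a nonzero polynomial when the residue ring is infinite (and then ${\mathfrak p}_i$ need not be maximal), a proper subset when it is finite (and then ${\mathfrak p}_i$ \emph{is} maximal) — so the "good" residues modulo each ${\mathfrak p}_i$ are nonempty; a Chinese-remainder argument, accounting for the gluing among non-comaximal ${\mathfrak p}_i$'s, then produces $\alpha$ good modulo every ${\mathfrak p}_i$, and with $\omega$ divisible by all ${\mathfrak p}_i$ the progression $\tau=\omega{\mathcal Z}+\alpha$ has every element good for the no-fixed-divisor clause; intersecting with the cofinite primitivity-preserving set completes the descent. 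The remaining primality types reduce to these: for near UFDs, irreducibles are prime so ${\rm Irred}={\rm Prime}$, and a nonunit fixed divisor contains a prime fixed divisor (every nonunit has a prime divisor), reducing ${\rm Nonunit}$ to ${\rm Prime}$; for Dedekind rings one passes to prime-ideal divisors to reduce ${\rm Irred},{\rm Prime},{\rm Nonunit}$ to ${\rm Spec}$.

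The main obstacle is making the finiteness and synthesis steps work \emph{uniformly over the several remaining parameters} $\underline T'$, rather than for $k=1$ only as in \cite{BDKN22}: one must bound the dangerous set and build $\tau$ so as to preclude \emph{every} fixed divisor that could be revealed by \emph{any} of the infinitely many specializations $\underline t'\in{\mathcal Z}^{k-1}$. Pinning down which constant vectors $\underline c$ suffice for the finiteness step, and verifying that the per-${\mathfrak p}_i$ residue obstruction is the only obstruction, is the technical heart — and it is exactly where the half near UFD axioms are too weak for ${\mathcal P}\in\{{\rm Irred},{\rm Nonunit}\}$, since a dangerous nonunit, resp.\ non-prime irreducible, with no prime divisor cannot be tamed, which accounts for the "No" entries.
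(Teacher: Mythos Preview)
Your overall architecture matches the paper's (prove the ``Yes'' entries of (a) by a finiteness-plus-synthesis argument, deduce (b) via Proposition~\ref{prop:schinzel+Hil=intH}), and your $\delta$-device---specialize $\underline T'=\underline c$ and observe that any fixed divisor of $P(t_1,\underline T',\underline Y)$ must contain the B\'ezout element $\delta$ of the $a_{\underline j}(T_1,\underline c)$---is even a mild streamlining of the paper's detour through $\Gamma_\Delta$ (Lemma~\ref{lem:zero}). For ${\mathcal P}={\rm Prime}$ in a half near UFD and ${\mathcal P}={\rm Spec}$ in a Dedekind domain your sketch is essentially correct and close to Proposition~\ref{lem:mainaxiomatic}.

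The genuine gap is your final reduction, ``for Dedekind rings one passes to prime-ideal divisors to reduce ${\rm Irred},{\rm Nonunit}$ to ${\rm Spec}$.'' This fails: the hypothesis $(\hbox{NoFixDiv})_{\rm Nonunit}$ is strictly weaker than $(\hbox{NoFixDiv})_{\rm Spec}$, since a nonprincipal prime $\mathfrak q$ may divide every coefficient of $P$ (e.g.\ $P=2Y+(1+\sqrt{-5})$ over $\mathbb Z[\sqrt{-5}]$, with $\mathfrak q=\langle 2,1+\sqrt{-5}\rangle$) while no principal nonunit does. Such a $\mathfrak q$ lies in ${\rm Supp}(\delta)$, enters your dangerous set, and admits \emph{no} good residue class for $t_1$; your assertion ``$\mathfrak p_i\notin{\rm Div}_{\mathcal Z}(P)$'' is unjustified for nonprincipal $\mathfrak p_i$, so the Synthesis step breaks down. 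The paper does \emph{not} reduce to ${\rm Spec}$: it works directly with $\mathcal P'={\rm Irred}$, which costs two ingredients you do not supply---finiteness of the irreducibles supported on a fixed finite set of primes (Lemma~\ref{lem:interrest}, via Dickson's lemma on $\mathbb N^m$) and the nonvanishing approximation property for ${\rm Irred}$ (Lemma~\ref{lem:chinremthe}, case~(b)), a nontrivial CRT among non-comaximal irreducibles that your phrase ``accounting for the gluing among non-comaximal $\mathfrak p_i$'s'' only gestures toward. (With your sharper $\delta$-bound one could in fact bypass Dickson, since principal divisors of $\delta$ are finitely many in a Dedekind domain; but the approximation lemma remains essential.) Separately, for the ``No'' entries the paper produces the concrete ring $\mathbb Z[\sqrt 5]$, verifies it is a half near UFD, and invokes \cite[Proposition~2.10]{BDKN22} for the failure of the (weaker) coprime Schinzel Hypothesis there; your treatment remains schematic.
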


Theorem \ref{thm:main1} from Section \ref{sec:intro} is the special case of Theorem \ref{thm-integ-Hilb} corresponding to the box ``Krull-Nonunit'' in statement (b).
A Krull domain is in fact a locally Schinzel ring \hbox{w.r.t.} any of the 3 primality types Nonunit, Irred, Prime.

\begin{rmk}
(a) It follows from Theorem \ref{thm-integ-Hilb}
and Remark \ref{rmk:int-H-coprimeS} that Krull domains satisfy the coprime Schinzel hypothesis. This is a progress compared to \cite{BDN20b} where, even in the special case of Dedekind domains, condition (*) from Remark \ref{rmk:int-H-coprimeS} defining the coprime Schinzel property was only known for $k=1$ (one parameter).
\vskip 1mm





\noindent
(b) Concerning the primality type ${\rm Spec}^\ast$, the answer to the question in (a) is unclear for our three types of rings (Krull, near UFD, PDF). From Proposition \ref{lem:mainaxiomatic} and Example \ref{exa:dednvasf}, the answer to (a) is ``Yes'' for Dedekind domains; hence,  by Proposition \ref{prop:schinzel+Hil=intH}, the answer to (b) is also ``Yes''.
\end{rmk}


\subsection{Rings that are Hilbertian but neither integrally Hilbertian nor locally Schinzel rings} \label{ssec:counter-examples}

In \cite[Proposition 2.10]{BDKN22}, the ring $\Z[\sqrt{5}]$ was identified as an example of a domain not satisfying the coprime Schinzel Hypothesis. Therefore it cannot be an integrally Hilbertian ring nor a locally Schinzel ring (Remark \ref{rmk:int-H-coprimeS}). The following shows that such rings, in fact, are abundant. 

\begin{lem}
Let $\mathcal{Z}$ be a domain with non-associate irreducible elements $p,q$ such that the ideal $(p)\cap (q)$ is contained in a unique maximal ideal. Then $\mathcal{Z}$ is neither integrally Hilbertian nor a locally Schinzel ring \hbox{w.r.t.} the primality type {\rm Nonunit}.
\end{lem}
\begin{proof}
This follows immediately from \cite[Proposition 2.8]{BDKN22} and Remark \ref{rmk:int-H-coprimeS}.
\end{proof}

\begin{cor} 
\label{cor:local}
A local domain with more than one irreducible element (up to associates) cannot be integrally Hilbertian nor a locally Schinzel ring \hbox{w.r.t.} the primality type {\rm Nonunit}.
\end{cor}

\begin{cor}
\label{cor:many_nonintH}
Every quadratic number field possesses subrings which are neither integrally Hilbertian nor  locally Schinzel \hbox{w.r.t.}  ${\mathcal P} = {\rm Nonunit}$, but are PDF rings and Noetherian.
\end{cor}

\begin{proof}
Given any squarefree integer $d$ and a prime $p$ which is inert in $\mathbb{Q}(\sqrt{d})$, \cite[Theorem 3.1]{CS12} constructs, via localization of some order of $\mathbb{Q}(\sqrt{d})$, a subring of $\mathbb{Q}(\sqrt{d})$ which is local and has $p+1$ non-associate irreducible elements. From Corollary \ref{cor:local}, such a ring is neither integrally Hilbertian nor a locally Schinzel ring. 
Obviously, they are also PDF rings, and they are Noetherian.
\end{proof}

\section{Proof of Theorem \ref{thm-integ-Hilb}}
\label{sec:proof-main-lemma}
\S \ref{ssec:reduction} is a first reduction stage, reducing the proof to that of Lemma \ref{lem:main} below.  \S \ref{ss:stronSchpro} introduces the strategy of the proof of this lemma. Definition \ref{def:schproide} pins down two general properties of a primality type $\mathcal{P}$ of the integral domain ${\mathcal Z}$ that, first, guarantee that $\mathcal{Z}$ is a locally Schinzel ring \hbox{w.r.t.} $\mathcal{P}$ (Proposition \ref{lem:mainaxiomatic}, proved in \S \ref{ssec:prop3.5}), and second, are satisfied (up to equivalence) in all situations of Lemma \ref{lem:main} (Proposition \ref{lem:schcases}, proved in \S \ref{ssec:prop3.6}).


\subsection{Reduction of the proof of Theorem \ref{thm-integ-Hilb}} 
\label{ssec:reduction}
Lemma \ref{lem:main} corresponds to the sole answers ``Yes'' in the table of statement (a) of Theorem \ref{thm-integ-Hilb}. 

\begin{lem} \label{lem:main}
A ring $\mathcal{Z}$ is a locally Schinzel ring \hbox{w.r.t.} $\mathcal{P}$ in each of the following situations:
\begin{enumerate}
\item $\mathcal{Z}$ is a Krull domain and $\mathcal{P} \in \{{\rm Nonunit}, {\rm Irred}, {\rm Prime}\}$.
\item $\mathcal{Z}$ is a PDF ring and $\mathcal{P}={\rm Prime}$. \label{cond2lemmain}
\item $\mathcal{Z}$ is a near UFD and $\mathcal{P} \in \{{\rm Nonunit}, {\rm Irred}, {\rm Prime}\}$. \label{cond3lemmain}
\end{enumerate}
\end{lem}

\begin{proof}[Proof of Theorem \ref{thm-integ-Hilb} assuming Lemma \ref{lem:main}] The situations from Lemma \ref{lem:main} correspond to the answers ``Yes'' from the table of Theorem \ref{thm-integ-Hilb}. Thus Lemma \ref{lem:main} directly yields these answers ``Yes'' for statement (a) of Theorem \ref{thm-integ-Hilb}, and also the answers ``Yes'' for statement (b), by Proposition \ref{prop:schinzel+Hil=intH}.

The answers ``No'' concerning PDF rings \hbox{w.r.t} the primality type Nonunit follow directly from Corollary \ref{cor:many_nonintH}. As the rings there are also Noetherian, the same is equivalently true \hbox{w.r.t.} to the primality type ${\mathcal P} = {\rm Irred}$ (the main point of the equivalence being that in a Noetherian ring, every nonunit is divisible by an irreducible).
\end{proof}

\subsection{Properties (NVA) and (SF)}\label{ss:stronSchpro}
Definition \ref{def:schproide} 
requests some preliminary definitions. First, we say that two primality types $\mathcal{P}$ and $\mathcal{P}'$ of $\mathcal{Z}$ are \emph{equivalent} if the following holds: {every ideal in $\mathcal{P}$ is contained in some ideal in $\mathcal{P}'$, and every ideal in $\mathcal{P}'$ is contained in some ideal in $\mathcal{P}$}. Note that, for such $\mathcal{P}$ and $\mathcal{P}'$, the ring $\mathcal{Z}$ is a locally Schinzel ring \hbox{w.r.t.} $\mathcal{P}$ if and only if $\mathcal{Z}$ is a locally Schinzel ring \hbox{w.r.t.} $\mathcal{P}'$; and the same holds for the integrally Hilbertian property.  

\begin{exa}
The primality types ${\rm Nonunit} \hskip 1pt \mathcal{Z}$ and ${\rm Irred} \hskip 1pt \mathcal{Z}$ are equivalent if $\mathcal{Z}$ is Noetherian or a Krull domain. Indeed in both these cases, the ascending chain condition for principal ideals is satisfied, which guarantees that every nonunit is divisible by an irreducible element.
\end{exa}

\begin{defn}
The \emph{support} of a nonzero ideal $\mathfrak{a}$ of $\mathcal{Z}$ is:
$$
{\rm Supp}(\mathfrak{a})=\{ \mathfrak{p} \in {\rm Spec}^\ast\hskip 1pt \mathcal{Z} \mid \mathfrak{a} \subset \mathfrak{p}\}.
$$
For a nonzero $a \in \mathcal{Z}$, we use the notation ${\rm Supp}(a)$ for ${\rm Supp}(\langle a \rangle)$. Given a subset $\mathcal{S} \subset {\rm Spec}^\ast \hskip 1pt \mathcal{Z}$, denote by $\Sigma({\mathcal{S}})$ the set of all ideals $\mathfrak{a}$ of $\mathcal{Z}$ such that ${\rm Supp}(\mathfrak{a}) \subset \mathcal{S}$. More generally, for $\mathcal{S} \subset \mathcal{E} \subset {\rm Spec}^\ast\hskip 1pt\mathcal{Z}$, denote by $\Sigma(\mathcal{S})_{\mathcal{E}}$ the set of all ideals $\mathfrak{a}$ of $\mathcal{Z}$ such that ${\rm Supp}(\mathfrak{a}) \cap \mathcal{E} \subset \mathcal{S}$.
\end{defn}
If $\mathcal{Z}$ is a Dedekind domain, $\Sigma(\mathcal{S})$ (resp., $\Sigma(\mathcal{S})_{\mathcal{E}}$) is the set of all ideals $\mathfrak{a}$ for which the prime ideals (resp., the prime ideals in $\mathcal{E}$) involved in the prime ideal factorization of $\mathfrak{a}$ are in $\mathcal{S}$.
\begin{rmk}\label{rmk:sigminter}
For any subsets $\mathcal{S} \subset {\rm Spec}^\ast \hskip 1pt \mathcal{Z}$ and $\mathcal{E} \subset {\rm Spec}^\ast \hskip 1pt \mathcal{Z}$, we have $\Sigma(\mathcal{S}) \subset \Sigma(\mathcal{S} \cap \mathcal{E})_{\mathcal{E}}$.
\end{rmk}
\begin{defn}\label{def:schproide}
A domain $\mathcal{Z}$ equipped with a primality type $\mathcal{P}$ is said to satisfy:

\begin{enumerate}
\item \label{eq:nonvanpr} the \emph{nonvanishing approximation property} -- (NVA)$\null_{\mathcal P}$ for short, if for any nonzero polynomial $P \in \mathcal{Z}[\underline{T},\underline{Y}]$, and any finite subset $\mathcal{S} \subset \mathcal{P}$, the following holds. If a family $(\underline{u}_{\mathfrak{a}})_{\mathfrak{a} \in \mathcal{S}}$ of $k$-tuples $\underline{u}_{\mathfrak{a}} \in \mathcal{Z}^k$ indexed by $\mathcal{S}$ satisfies: 
$$
P(\underline{u}_{\mathfrak{a}},\underline{Y}) \not \equiv 0 \pmod{\mathfrak{a}}\,\,\text{for all}\,\, \mathfrak{a} \in \mathcal{S},
$$
then there exists $\underline{v} \in \mathcal{Z}^k$ such that:
$$
P(\underline{v},\underline{Y}) \not \equiv 0 \pmod{\mathfrak{a}}\,\,\text{for all}\,\, \mathfrak{a} \in \mathcal{S},
$$
\item the \emph{support finiteness property} -- (SF)$\null_{\mathcal P}$ for short, if  there exists a subset $\mathcal{E} \subset {\rm Spec}^\ast\hskip 1pt \mathcal{Z}$ such that:\label{eq:exismaide}
\begin{enumerate}
\item for any nonzero $a \in \mathcal{Z}$, the set $
{\rm Supp}(a) \cap \mathcal{E}
$ is finite;
\item for any finite subset $\mathcal{S} \subset \mathcal{E}$, the set $
\Sigma(\mathcal{S})_{\mathcal{E}} \cap \mathcal{P}$
is finite.
\end{enumerate}
\end{enumerate}
\end{defn}
\begin{exa}\label{exa:dednvasf}
Let $\mathcal{Z}$ be a Dedekind domain. Using the Chinese remainder theorem, it is easily  checked that $\mathcal{Z}$ satisfies (NVA)$\null_{\mathcal P}$ for ${\mathcal P} = {\rm Spec}^\ast\hskip 1pt \mathcal{Z}$; and property 
(SF)$\null_{\mathcal P}$ is also satisfied  for $\mathcal{E}={\rm Spec}^\ast\hskip 1pt \mathcal{Z}$. Property 
(SF)$\null_{\mathcal P}$ also holds for ${\mathcal P}= {\rm Irred}$ (and $\mathcal{E}={\rm Spec}^\ast\hskip 1pt \mathcal{Z}$). But it does not for ${\mathcal P}= {\rm Nonunit}$: powers $a^n$ of a nonunit $a$ have the same support; hence condition (2)(b) fails.
\end{exa}
We can state the two key intermediate results from which Lemma \ref{lem:main} follows.

\begin{prop} \label{lem:mainaxiomatic}
Let $\mathcal{Z}$ be a domain equipped with a primality type $\mathcal{P}$. If properties  {\rm (NVA)}$\null_{\mathcal P}$ and {\rm (SF)}$\null_{\mathcal P}$ are satisfied, then ${\mathcal Z}$ is a locally Schinzel ring \hbox{w.r.t.} ${\mathcal P}$. \end{prop}

\begin{prop}\label{lem:schcases}
In each of the situations of Lemma \ref{lem:main}, the primality type $\mathcal{P}$ is equivalent to a primality type $\mathcal{P}^\prime$ of ${\mathcal Z}$ such that {\rm (NVA)}$\null_{{\mathcal P}^\prime}$ and {\rm (SF)}$\null_{{\mathcal P}^\prime}$ are satisfied. 
\end{prop}

\subsection{Proof of Proposition \ref{lem:mainaxiomatic}} \label{ssec:prop3.5}

\subsubsection{Preliminary lemmas} 
We establish some results needed for the proof of Proposition \ref{lem:mainaxiomatic}. For any integer $B \geq 1$, define: 
$$
\Gamma_B=\left\lbrace \mathfrak{p} \in {\rm Spec}^\ast\hskip 1pt\mathcal{Z}\mid |\mathcal{Z}/\mathfrak{p}| \leq B \right\rbrace.
$$
\begin{lem}\label{lem:interrestgen}
For any integer $B \geq 1$, there exists a nonzero $a \in \mathcal{Z}$ such that $\Gamma_B \subset {\rm Supp}(a)$. 
\end{lem}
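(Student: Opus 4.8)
The plan is to produce a single nonzero element $a\in\mathcal{Z}$ that is divisible by \emph{every} prime of $\Gamma_B$ at once, by exploiting the congruence $x^{q}=x$ satisfied by every element of a finite field with $q$ elements.

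First I would record what the members of $\Gamma_B$ look like: if $\mathfrak{p}\in\Gamma_B$ then $\mathcal{Z}/\mathfrak{p}$ is a finite integral domain, hence a finite field, and writing $q=|\mathcal{Z}/\mathfrak{p}|$ we have $2\le q\le B$. (In particular $\langle 0\rangle\notin\Gamma_B$ since $\mathcal{Z}$ is infinite; I take $\mathcal{Z}$ infinite throughout, as it is in every situation where the lemma is applied — note that for a finite field $\mathbb{F}_q$ with $q\le B$ one would have $\langle 0\rangle\in\Gamma_B$ and the statement would fail, so this hypothesis is genuinely needed.) Next I would choose the base element: pick $z\in\mathcal{Z}$ lying outside the zero set of the nonzero one-variable polynomial $\prod_{j=2}^{B}(X^{j}-X)\in\mathcal{Z}[X]$. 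Such a $z$ exists because that zero set is finite — a nonzero polynomial over the domain $\mathcal{Z}$ has at most $\deg$ many roots — whereas $\mathcal{Z}$ is infinite; by construction $z\neq 0$ and $z^{j}\neq z$ for $j=2,\dots,B$. I then set $a=\prod_{j=2}^{B}(z^{j}-z)$, which is nonzero precisely by the choice of $z$ (and which, for $B=1$, is the empty product $1$, matching $\Gamma_1=\emptyset$).

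Finally I would verify the inclusion $\Gamma_B\subset{\rm Supp}(a)$: given $\mathfrak{p}\in\Gamma_B$ with $q=|\mathcal{Z}/\mathfrak{p}|\in\{2,\dots,B\}$, the image $\bar z$ of $z$ in the field $\mathcal{Z}/\mathfrak{p}$ satisfies $\bar z^{q}=\bar z$, so $z^{q}-z\in\mathfrak{p}$; since $z^{q}-z$ is one of the factors of $a$ (here $2\le q\le B$ is used), it follows that $a\in\mathfrak{p}$, i.e. $\mathfrak{p}\in{\rm Supp}(a)$. This gives the claim.

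The one genuine subtlety — and the step I would be most careful about — is ensuring $a\neq 0$; this is exactly where the finiteness of the root set of a univariate polynomial over a domain, together with $|\mathcal{Z}|=\infty$, enters. Everything else is the elementary finite-field congruence $x^{q}=x$ plus bookkeeping over the finitely many factors indexed by $j=2,\dots,B$.
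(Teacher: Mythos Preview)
Your proof is correct and follows essentially the same approach as the paper: both exploit the congruence $x^{q}=x$ in a finite field of size $q$ and take $a$ to be a product of terms of the form $z^{j}-z$. The only cosmetic differences are that the paper restricts the product to prime powers $q\le B$ and allows a different witness $m_q$ for each factor, while you use a single $z$ and all exponents $2\le j\le B$; your explicit treatment of the infiniteness of $\mathcal{Z}$ (needed to ensure $a\neq 0$) is a point the paper leaves implicit.
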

\begin{proof}
The proof is the same as that of \cite[Lemma 3.9]{BDKN22} by replacing primes with prime ideals. For the sake of completeness, we reproduce the argument. Fix an integer $B\geq 1$. For every prime power $q=\ell^r \leq B$, take an element $m_q \in \mathcal{Z}$ such that $m_q^q-m_q \neq 0$. Let $a$ be the product
of all $m_q^q-m_q$ with $q$ running over all prime powers $q \leq B$. 

Consider now a prime ideal $\mathfrak{p} \in \Gamma_B$. The integral domain $\mathcal{Z}/\mathfrak{p}$, being
finite, is a field. Hence $|\mathcal{Z}/\mathfrak{p}|$ is a prime power $q=\ell^r$, and $q \leq B$. Since $m_q^q-m_q \equiv 0 \pmod{\mathfrak{p}}$, the prime ideal $\mathfrak{p}$ contains $a$, i.e. $\mathfrak{p} \in {\rm Supp}(a)$. Therefore $\Gamma_B \subset {\rm Supp}(a)$.
\end{proof}
Lemma \ref{lem:zero} locates the support of fixed divisors of polynomials. Recall that, for a nonzero polynomial $P \in \mathcal{Z}[\underline{T},\underline{Y}]$, the set $\mathcal{F}_{\underline{T}}(P)$ denotes the set of all fixed ideal divisors of $P$ \hbox{w.r.t.} $\underline{T}$. Moreover, for a nonzero $P \in \mathcal{Z}[\underline{Y}]$, the set ${\rm Div}_{\mathcal{Z}}(P)$ denotes the set of all ideal divisors of $P$.
\begin{lem}\label{lem:zero}
Let $P \in \mathcal{Z}[\underline{T},\underline{Y}]$ be a nonzero polynomial. Set $\Delta=\max_{1 \leq i \leq k}\deg_{T_i}(P)$ and $\Omega={\rm Div}_{\mathcal{Z}}(P) \cap {\rm Spec}^\ast\hskip 1pt \mathcal{Z}$. Then we have these inclusions: 
\begin{enumerate}
\item $(\mathscr{F}_{\underline{T}}(P) \cap {\rm Spec}^\ast\hskip 1pt \mathcal{Z}) \setminus {\rm Div}_{\mathcal{Z}}(P) \subset \Gamma_{\Delta}$. \label{lem:zeropart1}
\item $\mathscr{F}_{\underline{T}}(P) \subset \Sigma(\Omega \cup \Gamma_{\Delta})$.  \label{lem:zeropart2}
\end{enumerate}
\end{lem}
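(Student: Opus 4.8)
The strategy is to analyze, for a prime ideal $\mathfrak{p}$, what it means for $\mathfrak{p}$ to be a fixed divisor of $P$ w.r.t.\ $\underline T$ in terms of the reduction $\overline P \in (\mathcal{Z}/\mathfrak{p})[\underline T,\underline Y]$. For part \eqref{lem:zeropart1}, suppose $\mathfrak{p} \in \mathscr{F}_{\underline T}(P) \cap {\rm Spec}\,\mathcal{Z}$ but $\mathfrak{p} \notin {\rm Div}_{\mathcal{Z}}(P)$; the latter says $\overline P \neq 0$ in $(\mathcal{Z}/\mathfrak{p})[\underline T,\underline Y]$, while the former says $\overline P$ vanishes at every point of $(\mathcal{Z}/\mathfrak{p})^k$ when viewed as a polynomial in $\underline T$ over the domain $(\mathcal{Z}/\mathfrak{p})[\underline Y]$ — more precisely, each coefficient of $\overline P$ as a polynomial in $\underline Y$ is a polynomial in $\underline T$ over the field of fractions of $\mathcal{Z}/\mathfrak{p}$ that vanishes on all of $(\mathcal{Z}/\mathfrak{p})^k$. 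I would first argue that $\mathcal{Z}/\mathfrak{p}$ must then be a finite field: if it were infinite, a nonzero polynomial in $\underline T$ of degree $\leq \Delta$ in each $T_i$ over an infinite integral domain cannot vanish identically on $(\mathcal{Z}/\mathfrak{p})^k$ (standard Schwartz–Zippel / induction on $k$), contradicting $\overline P \neq 0$. So $\mathcal{Z}/\mathfrak{p}$ is a finite field $\mathbb{F}_q$. The key quantitative point is the bound $q \leq \Delta$: since some coefficient $c(\underline T)$ of $\overline P$ (w.r.t.\ $\underline Y$) is a nonzero polynomial of degree $\leq \Delta$ in each variable that vanishes on all of $\mathbb{F}_q^k$, picking a variable in which it has positive degree and specializing the others suitably, one gets a nonzero one-variable polynomial over (an extension, if one is careless, but one can stay over $\mathbb F_q$ by choosing the specialization in $\mathbb F_q$ so the result stays nonzero) of degree $\leq \Delta$ vanishing on all of $\mathbb{F}_q$, which forces $q \leq \Delta$. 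Hence $|\mathcal{Z}/\mathfrak{p}| \leq \Delta$, i.e.\ $\mathfrak{p} \in \Gamma_\Delta$.

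For part \eqref{lem:zeropart2}, let $\mathfrak{p}$ be any prime ideal in the support of some fixed divisor $\mathfrak{q} \in \mathscr{F}_{\underline T}(P)$ — equivalently (by definition of $\Sigma$), I must show that every $\mathfrak{p} \in {\rm Spec}\,\mathcal{Z}$ that can occur in the support of an element of $\mathscr{F}_{\underline T}(P)$ lies in $\Omega \cup \Gamma_\Delta$; but actually $\Sigma(\Omega \cup \Gamma_\Delta)$ is the set of ideals whose support is contained in $\Omega \cup \Gamma_\Delta$, so I need: for $\mathfrak{q} \in \mathscr{F}_{\underline T}(P)$, ${\rm Supp}(\mathfrak{q}) \subset \Omega \cup \Gamma_\Delta$. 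So take $\mathfrak{p} \in {\rm Supp}(\mathfrak{q})$, i.e.\ $\mathfrak{q} \subset \mathfrak{p}$. Then $P(\underline t,\underline Y) \equiv 0 \pmod{\mathfrak{q}}$ for all $\underline t$ implies $P(\underline t,\underline Y)\equiv 0 \pmod{\mathfrak{p}}$ for all $\underline t$, so $\mathfrak{p} \in \mathscr{F}_{\underline T}(P) \cap {\rm Spec}\,\mathcal{Z}$. Now apply part \eqref{lem:zeropart1}: either $\mathfrak{p} \in {\rm Div}_{\mathcal{Z}}(P)$, hence $\mathfrak{p} \in {\rm Div}_{\mathcal{Z}}(P)\cap{\rm Spec}\,\mathcal{Z} = \Omega$, or $\mathfrak{p} \in \Gamma_\Delta$. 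In either case $\mathfrak{p} \in \Omega \cup \Gamma_\Delta$, so ${\rm Supp}(\mathfrak{q}) \subset \Omega \cup \Gamma_\Delta$, which is exactly $\mathfrak{q} \in \Sigma(\Omega \cup \Gamma_\Delta)$.

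The main obstacle I anticipate is being careful in part \eqref{lem:zeropart1} about the reduction step: "$P$ has $\mathfrak{p}$ as a fixed divisor" means $P(\underline t, \underline Y) \equiv 0 \pmod{\mathfrak{p}}$ for all $\underline t \in \mathcal{Z}^k$, and since $\mathcal{Z} \to \mathcal{Z}/\mathfrak{p}$ is surjective this is the same as $\overline P$ vanishing on all of $(\mathcal{Z}/\mathfrak{p})^k$ as an element of $(\mathcal{Z}/\mathfrak{p})[\underline Y][\underline T]$; one must then pass to the fraction field of the domain $\mathcal{Z}/\mathfrak{p}$ and extract a genuine one-variable vanishing statement to read off $q \leq \Delta$, handling the multivariate-to-univariate reduction (specializing all but one $T_i$ to values in $\mathbb F_q$ keeping a chosen coefficient nonzero — possible because that coefficient, being nonzero of bounded degree, is nonzero at some point of $\mathbb F_q^{k-1}$ precisely when $q$ is large enough, and if it vanishes at every point of $\mathbb F_q^{k-1}$ we instead recurse on fewer variables, so an induction on $k$ is the clean way to organize this). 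Everything else is bookkeeping with supports and the definition of $\Sigma$. I would also note that Lemma~\ref{lem:interrestgen} is not needed for the proof of this lemma itself (it is the companion fact that $\Gamma_\Delta$ sits inside a single ${\rm Supp}(a)$), so the two parts above are self-contained modulo elementary finite-field facts.
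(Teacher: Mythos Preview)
Your proposal is correct and follows essentially the same approach as the paper: part~\eqref{lem:zeropart2} is argued identically (pass from $\mathfrak q$ to any $\mathfrak p\in{\rm Supp}(\mathfrak q)$, observe $\mathfrak p$ is then itself a prime fixed divisor, and apply part~\eqref{lem:zeropart1}), while for part~\eqref{lem:zeropart1} the paper simply invokes \cite[Lemma~3.1(a)]{BDKN22} adapted from prime elements to prime ideals, which is exactly the ``nonzero polynomial of degree $\leq\Delta$ in each $T_i$ over the domain $\mathcal Z/\mathfrak p$ cannot vanish on all of $(\mathcal Z/\mathfrak p)^k$ unless $|\mathcal Z/\mathfrak p|\leq\Delta$'' argument you spell out. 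Your inductive reduction to one variable is the standard way to unpack that citation.
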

\begin{proof}
(\ref{lem:zeropart1}) We wish to show that if $\mathfrak{p}$ is a fixed divisor of $P$ \hbox{w.r.t.} $\underline{T}$ in ${\rm Spec}^\ast\hskip 1pt \mathcal{Z}$ such that $\mathfrak{p}$ does not divide $P$, then $|\mathcal{Z}/\mathfrak{p}| \leq \Delta$. The proof follows from the same reasoning as in \cite[Lemma 3.1 (a)]{BDKN22}, with prime elements replaced by prime ideals.
\vskip 1mm
\noindent
(\ref{lem:zeropart2}) Let $\mathfrak{a} \in \mathscr{F}_{\underline{T}}(P)$. We wish to show that ${\rm Supp}(\mathfrak{a}) \subset \Omega \cup \Gamma_{\Delta}$. Let $\mathfrak{p} \in {\rm Supp}(\mathfrak{a})$, \hbox{i.e.} $\mathfrak{p} \supset \mathfrak{a}$. If $\mathfrak{p} \in {\rm Div}_{\mathcal{Z}}(P)$, then $\mathfrak{p} \in \Omega$. Now assume that $ \mathfrak{p} \notin {\rm Div}_{\mathcal{Z}}(P)$. Since $ \mathfrak{a} \in \mathscr{F}_{\underline{T}}(P)$, we have $\mathfrak{p} \in \mathscr{F}_{\underline{T}}(P)$. By \eqref{lem:zeropart1}, we obtain $\mathfrak{p} \in \Gamma_{\Delta}$.
\end{proof}

\subsubsection{Proof of Proposition \ref{lem:mainaxiomatic}}
Let $\mathcal{E} \subset {\rm Spec}^\ast \hskip 1pt \mathcal{Z}$ be the subset associated with (SF)$\null_{\mathcal P}$. 

Let $P_1,\ldots,P_s \in {\mathcal Z}[\underline{T},\underline{Y}]$ be $s$ polynomials satisfying (Prim$/{\mathcal Q}[\underline T]$) and $\hbox{\rm (NoFixDiv}/{\mathcal Z}[\underline T])_{\mathcal P}$. Set $P=P_1 \cdots P_s$. Consider $P_1,\ldots,P_s$ as polynomials in $\underline{T}^\prime=(T_2,\dots,T_k)$ and $\underline{Y}$ and with coefficients in $\mathcal{Z}[T_1]$. Note that they are all primitive \hbox{w.r.t.} $\mathcal{Q}[T_1]$. If follows that $P$ is primitive \hbox{w.r.t.} $\mathcal{Q}[T_1]$ as well. Thus, the coefficients $Q_1,\dots,Q_r \in \mathcal{Z}[T_1]$ ($r \geq 1$) of $P$, considered as a polynomial in $\underline{T}',\underline{Y}$, are coprime in $\mathcal{Q}[T_1]$. As in \cite[\S 2.1]{BDKN22}, choose a nonzero element 
$$\delta\in \Biggl( \sum_{j=1}^{r} Q_{j}\mathcal{Z}[T_1] \Biggl) \cap \mathcal{Z}.
$$   
For $\Delta=\max_{1 \leq i \leq k}\deg_{T_i}(P)$, set: 
 \begin{equation}\label{eq:ererer}
\mathcal{S}=\mathcal{P} \cap \Sigma\left(\hskip 1pt [\hskip 1pt {\rm Supp}(\delta) \cup {\rm Div}_{\mathcal{Z}}(P) \cup \Gamma_\Delta] \cap \mathcal{E}\hskip 2pt \right)_{\mathcal{E}}. 
 \end{equation}
 By the support finiteness property, ${\rm Supp}(\delta) \cap \mathcal{E}$ and ${\rm Div}_{\mathcal{Z}}(P) \cap \mathcal{E}$ are finite; for the latter, note that it is contained in ${\rm Supp}(a) \cap \mathcal{E}$ for any nonzero coefficient $a$ of $P$ in $\mathcal{Z}$. Moreover, by Lemma \ref{lem:interrestgen} and again the support finiteness property, $\Gamma_{\Delta} \cap \mathcal{E}$ is finite. Therefore $[{\rm Supp}(\delta) \cup {\rm Div}_{\mathcal{Z}}(P) \cup \Gamma_\Delta] \cap \mathcal{E}$ is finite too. It follows that $\mathcal{S}$ is finite. Choose a nonzero element $\omega \in \prod_{\mathfrak{p} \in \mathcal{S}} \mathfrak{p} \subset \bigcap_{\mathfrak{p} \in \mathcal{S}} \mathfrak{p}$ -- we take $\omega=1$ if $\mathcal{S}=\emptyset$.
\begin{claim}\label{clm:1lem11we}
There exist arithmetic progressions
$$
\tau_1= (\omega \ell + v_{1})_{\ell\in \mathcal{Z}}\,,\, \tau_2= (\omega \ell + v_{2})_{\ell\in \mathcal{Z}}\,, \dots,\, \tau_k= (\omega \ell + v_{k})_{\ell\in \mathcal{Z}} \subset \mathcal{Z}
$$ such that
\begin{equation}
P(\underline u,\underline{Y}) \not\equiv 0 \pmod{\mathfrak{p}},
\end{equation}
for all $\underline{u}=(u_1,\dots,u_k) \in \tau_1 \times \dots \times \tau_k$ and all $\mathfrak{p} \in \mathcal{S}$.
\end{claim}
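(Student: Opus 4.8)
The plan is to reduce the claimed statement about arithmetic progressions to the existence of a single suitable $k$-tuple $\underline{v}\in\mathcal{Z}^k$, and then to obtain that tuple from the nonvanishing approximation property. Recall that $\omega$ was chosen so that $\omega\in\mathfrak{p}$ for every $\mathfrak{p}\in\mathcal{S}$. Hence, if $v_1,\dots,v_k\in\mathcal{Z}$ are any elements and $\underline{u}=(v_1+\omega\ell_1,\dots,v_k+\omega\ell_k)$ with $\ell_1,\dots,\ell_k\in\mathcal{Z}$, then $\underline{u}\equiv\underline{v}\pmod{\mathfrak{p}}$ componentwise for each $\mathfrak{p}\in\mathcal{S}$; since $P$ has coefficients in $\mathcal{Z}$, this forces $P(\underline{u},\underline{Y})\equiv P(\underline{v},\underline{Y})\pmod{\mathfrak{p}}$ as polynomials in $\underline{Y}$. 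Therefore it suffices to exhibit a single $\underline{v}=(v_1,\dots,v_k)\in\mathcal{Z}^k$ with $P(\underline{v},\underline{Y})\not\equiv0\pmod{\mathfrak{p}}$ for all $\mathfrak{p}\in\mathcal{S}$, and then to define $\tau_i=(\omega\ell+v_i)_{\ell\in\mathcal{Z}}$ ($i=1,\dots,k$). When $\mathcal{S}=\emptyset$ (so $\omega=1$) the statement is vacuous, so I may assume $\mathcal{S}\neq\emptyset$.

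To produce such a $\underline{v}$, I first note that the hypotheses give a ``local'' solution at each $\mathfrak{p}\in\mathcal{S}$: since $\mathcal{S}\subset\mathcal{P}$ and $P=P_1\cdots P_s$ satisfies $\hbox{\rm (NoFixDiv}/{\mathcal Z}[\underline T])_{\mathcal P}$, no $\mathfrak{p}\in\mathcal{S}$ is a fixed divisor of $P$ \hbox{w.r.t.} $\underline{T}$, so there exists $\underline{u}_{\mathfrak{p}}\in\mathcal{Z}^k$ with $P(\underline{u}_{\mathfrak{p}},\underline{Y})\not\equiv0\pmod{\mathfrak{p}}$. Now I glue these: $P$ is a nonzero polynomial (a product of the polynomials $P_i$, which are nonzero because primitive, in the domain $\mathcal{Z}[\underline{T},\underline{Y}]$), the set $\mathcal{S}$ is finite (as established just above the claim), and $\mathcal{S}\subset\mathcal{P}$, which — after the reduction to $\mathcal{P}'$ made at the start of the proof of Proposition \ref{lem:mainaxiomatic} — satisfies the nonvanishing approximation property. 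Applying that property to $P$, to the finite subset $\mathcal{S}\subset\mathcal{P}$, and to the family $(\underline{u}_{\mathfrak{p}})_{\mathfrak{p}\in\mathcal{S}}$ produces the required $\underline{v}$, completing the proof of the claim.

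There is no serious obstacle in this step once the set-up is in place: the substance of the argument has already been spent in choosing $\delta$, $\mathcal{S}$ and $\omega$, and — separately — in verifying (Proposition \ref{lem:schcases}) that the relevant primality types are strong Schinzel rings. The only points requiring a little care are that $\mathcal{S}$ really is a finite subset of $\mathcal{P}'$ (so the nonvanishing approximation property applies verbatim), that $P\neq0$, and the elementary observation that reducing the arguments modulo $\mathfrak{p}$ reduces the specialized polynomial modulo $\mathfrak{p}$ — this last fact is precisely what lets the common modulus $\omega$ upgrade a pointwise nonvanishing statement into one valid along the entire product of arithmetic progressions $\tau_1\times\cdots\times\tau_k$.
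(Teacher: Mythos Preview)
Your proof is correct and follows essentially the same approach as the paper: obtain local nonvanishing points $\underline{u}_{\mathfrak{p}}$ from $\hbox{\rm (NoFixDiv}/{\mathcal Z}[\underline T])_{\mathcal P}$, glue them via the nonvanishing approximation property to a single $\underline{v}$, then use $\omega\in\bigcap_{\mathfrak{p}\in\mathcal{S}}\mathfrak{p}$ to propagate the nonvanishing along the full product of arithmetic progressions. The only difference is the order of presentation---you first reduce to finding $\underline{v}$ and then construct it, whereas the paper constructs $\underline{v}$ first and then verifies the arithmetic progressions work---but the content is identical.
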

\begin{proof}[Proof of Claim \ref{clm:1lem11we}]
From $\hbox{\rm (NoFixDiv}/{\mathcal Z}[\underline T])_{\mathcal P}$, for every $\mathfrak{p} \in \mathcal{S}$,
there exists 
$\underline u_{\mathfrak{p}} \in \mathcal{Z}^k$ such that 
\begin{equation}\label{eq:pcongr1we}
P(\underline u_{\mathfrak{p}},\underline{Y})\not\equiv 0 \pmod{\mathfrak{p}}.
\end{equation}
By the nonvanishing approximation property, there exists $\underline{v}=(v_1,\dots,v_k) \in \mathcal{Z}^k$ such that
$$
P(\underline{v},\underline{Y}) \not\equiv 0 \pmod{\mathfrak{p}}\,\,\, \textrm{for all}\,\, \mathfrak{p} \in \mathcal{S}.
$$ 

Set $\tau_1 \times \dots \times \tau_k=(\omega \ell + v_{1})_{\ell\in \mathcal{Z}} \times \dots \times (\omega \ell + v_{k})_{\ell\in \mathcal{Z}}$. Let $\underline{u}=(u_1,\dots,u_k) \in \tau_1 \times \dots \times \tau_k$ and $\mathfrak{p} \in \mathcal{S}$. Since $\omega \in \bigcap_{\mathfrak{p} \in \mathcal{S}}\mathfrak{p} $, we have $\underline{u}\equiv \underline{v} \pmod{\mathfrak{p}}$, and thus
\begin{equation}
P(\underline u,\underline{Y})\equiv P(\underline{v},\underline{Y})\not\equiv 0 \pmod{\mathfrak{p}}.
\end{equation}
\end{proof}
Consider the following polynomials, where $V_1$ is a new variable:
$$\widetilde P_i (V_1,\underline{T}^\prime, \underline{Y})= P_i(\omega V_1+v_1, \underline{T}^\prime, \underline{Y}) \in \mathcal{Z}[V_1,\underline{T}^\prime,\underline{Y}],\hskip 2mm  (i=1,\dots,s),
$$ and set $\widetilde P = \prod_{i=1}^s \widetilde P_i$. To complete the proof of Proposition \ref{lem:mainaxiomatic}, it remains to establish the following.
\begin{claim}\label{clm:existinfmanywe}
For all but finitely many $\ell_1 \in \mathcal{Z}$, the polynomials $$\widetilde{P}_1(\ell_1,\underline{T}^\prime,\underline{Y}),\dots,\widetilde{P}_s(\ell_1,\underline{T}^\prime,\underline{Y})$$ satisfy \hbox{\rm (Prim$/{\mathcal Q}[\underline T^\prime]$)} and $\hbox{\rm (NoFixDiv}/{\mathcal Z}[\underline T^\prime])_{\mathcal P}$.
\end{claim}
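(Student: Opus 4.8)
The plan is to prove the two required conditions by separate arguments. The condition (Prim$/{\mathcal Q}[\underline T^\prime]$) for $\widetilde{P}_1(\ell_1,\underline{T}^\prime,\underline{Y}),\dots,\widetilde{P}_s(\ell_1,\underline{T}^\prime,\underline{Y})$ will hold for all but finitely many $\ell_1$ by a routine specialization argument, whereas $\hbox{\rm (NoFixDiv}/{\mathcal Z}[\underline T^\prime])_{\mathcal P}$ will in fact hold for \emph{every} $\ell_1 \in \mathcal{Z}$; the latter is where the real content lies, and it is built entirely from the data ($\delta$, $\omega$, $\underline{v}$, $\mathcal{S}$) prepared before the statement of the claim.

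For primitivity I would invoke the standard fact that primitivity survives a generic specialization of one parameter: since each $P_i$ is primitive \hbox{w.r.t.} ${\mathcal Q}[\underline T]$, the set of $t \in {\mathcal Q}$ for which the $\underline{Y}$-coefficients of $P_i(t,\underline{T}^\prime,\underline{Y})$ acquire a common factor of positive degree in ${\mathcal Q}[\underline T^\prime]$ (or for which $\deg_{\underline{Y}}$ drops below $\deg_{\underline{Y}}(P_i)$) lies in a proper Zariski-closed, hence finite, subset $F_i \subset {\mathcal Q}$. As $\omega \neq 0$ and $\mathcal{Z}$ is a domain, the map $\ell_1 \mapsto \omega\ell_1 + v_1$ is injective, so only finitely many $\ell_1$ push $\omega\ell_1 + v_1$ into $\bigcup_i F_i$; for every other $\ell_1$ the specialized polynomials satisfy (Prim$/{\mathcal Q}[\underline T^\prime]$). (This primitivity-transfer fact should be recorded as a separate lemma or cited, e.g. from \cite{BDN20b}.)

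For $\hbox{\rm (NoFixDiv}/{\mathcal Z}[\underline T^\prime])_{\mathcal P}$ I would fix an arbitrary $\ell_1 \in \mathcal{Z}$, put $t = \omega\ell_1 + v_1$, and suppose toward a contradiction that some $\mathfrak{q} \in \mathcal{P}$ is a fixed divisor of $\widetilde{P}(\ell_1,\underline{T}^\prime,\underline{Y}) = P(t,\underline{T}^\prime,\underline{Y})$ \hbox{w.r.t.} $\underline{T}^\prime$. Writing $P = \sum_j Q_j(T_1)\,M_j(\underline{T}^\prime,\underline{Y})$ with the $M_j$ the distinct monomials in $\underline{T}^\prime,\underline{Y}$, and recalling $\delta = \sum_j Q_j(T_1) B_j(T_1)$ with $B_j \in \mathcal{Z}[T_1]$ and $\delta \in \mathcal{Z}\setminus\{0\}$, specialization at $T_1 = t$ gives $\delta = \sum_j Q_j(t) B_j(t)$; hence the coefficients $Q_j(t) \in \mathcal{Z}$ of $\widetilde{P}(\ell_1,\cdot)$ cannot all vanish (so $\widetilde{P}(\ell_1,\cdot) \neq 0$), and every prime ideal dividing $\widetilde{P}(\ell_1,\cdot)$ contains $\langle Q_j(t) : j\rangle \ni \delta$, so ${\rm Div}_{\mathcal Z}(\widetilde{P}(\ell_1,\cdot)) \cap {\rm Spec}\,{\mathcal Z} \subseteq {\rm Supp}(\delta)$. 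Applying Lemma \ref{lem:zero}(\ref{lem:zeropart2}) to the nonzero polynomial $\widetilde{P}(\ell_1,\underline{T}^\prime,\underline{Y})$, with parameters $\underline{T}^\prime$ and variables $\underline{Y}$, and using $\deg_{T_i}\widetilde{P}(\ell_1,\cdot) \le \deg_{T_i}P \le \Delta$ for $i \ge 2$, one gets $\mathscr{F}_{\underline{T}^\prime}(\widetilde{P}(\ell_1,\cdot)) \subseteq \Sigma(({\rm Div}_{\mathcal Z}(\widetilde{P}(\ell_1,\cdot)) \cap {\rm Spec}\,{\mathcal Z}) \cup \Gamma_\Delta) \subseteq \Sigma({\rm Supp}(\delta) \cup \Gamma_\Delta)$. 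Therefore ${\rm Supp}(\mathfrak{q}) \subseteq {\rm Supp}(\delta) \cup \Gamma_\Delta \subseteq {\rm Supp}(\delta) \cup {\rm Div}_{\mathcal Z}(P) \cup \Gamma_\Delta$, so ${\rm Supp}(\mathfrak{q}) \cap \mathcal{E} \subseteq ({\rm Supp}(\delta) \cup {\rm Div}_{\mathcal Z}(P) \cup \Gamma_\Delta) \cap \mathcal{E}$, which together with $\mathfrak{q} \in \mathcal{P}$ places $\mathfrak{q} \in \mathcal{P} \cap \Sigma(({\rm Supp}(\delta) \cup {\rm Div}_{\mathcal Z}(P) \cup \Gamma_\Delta) \cap \mathcal{E})_{\mathcal E} = \mathcal{S}$ by \eqref{eq:ererer}. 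Finally $\underline{u} = (t, v_2, \dots, v_k) \in \tau_1 \times \dots \times \tau_k$ (take $\ell = \ell_1$ in the first slot and $\ell = 0$ in the others), so Claim \ref{clm:1lem11we} gives $P(\underline{u},\underline{Y}) \not\equiv 0 \pmod{\mathfrak{q}}$; but $\mathfrak{q}$ being a fixed divisor of $P(t,\underline{T}^\prime,\underline{Y})$ \hbox{w.r.t.} $\underline{T}^\prime$ forces $P(\underline{u},\underline{Y}) = P(t,(v_2,\dots,v_k),\underline{Y}) \equiv 0 \pmod{\mathfrak{q}}$ --- a contradiction. Thus $\widetilde{P}(\ell_1,\cdot)$ satisfies $\hbox{\rm (NoFixDiv}/{\mathcal Z}[\underline T^\prime])_{\mathcal P}$ for every $\ell_1$, which with the previous paragraph proves the claim and completes the proof of Proposition \ref{lem:mainaxiomatic}.

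The step I expect to be the main obstacle is the support bookkeeping in the third paragraph: one must confine ${\rm Supp}(\mathfrak{q})$ to ${\rm Supp}(\delta) \cup \Gamma_\Delta$ \emph{uniformly in $\ell_1$} and then match this against the layered definition of $\mathcal{S}$ through $\Sigma(\cdot)_{\mathcal E}$. The crucial point --- and the reason $\mathcal{S}$ was built as it was, \emph{before} $\ell_1$ was chosen --- is that the single element $\delta$, produced by the primitivity of $P$ \hbox{w.r.t.} ${\mathcal Q}[T_1]$, bounds ${\rm Div}_{\mathcal Z}(\widetilde{P}(\ell_1,\cdot))$ by ${\rm Supp}(\delta)$ for all $\ell_1$ simultaneously, so that the finite set $\mathcal{S}$ already absorbs every conceivable fixed divisor $\mathfrak{q}$; Claim \ref{clm:1lem11we} then supplies the contradiction. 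The primitivity step, by contrast, is comparatively routine.
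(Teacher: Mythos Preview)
Your argument is correct and matches the paper's own proof closely: the paper invokes \cite[Proposition 3.1]{BD22} (rather than \cite{BDN20b}) for the primitivity step and carries out the NoFixDiv contradiction exactly as you do, via Lemma \ref{lem:zero}, the $\delta$-bound on ${\rm Div}_{\mathcal Z}(\widetilde P(\ell_1,\cdot))$, and Claim \ref{clm:1lem11we}. Your observation that the NoFixDiv condition in fact holds for \emph{every} $\ell_1$ (not only those outside the finite primitivity-exceptional set) is correct and slightly sharper than what the paper states, though the paper's argument does not actually use primitivity in that step either.
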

\begin{proof}[Proof of Claim \ref{clm:existinfmanywe}]

As polynomials in $\mathcal{Q}[V_1,\underline{T}^\prime][\underline{Y}]$, the polynomials $\widetilde P_1,\ldots, \widetilde P_s$ satisfy (Prim$/{\mathcal Q}[V_1,\underline T^\prime]$). We next use \cite[Proposition 3.1]{BD22}, which is an analog for coprimality of the Bertini--Noether theorem  (originally for irreducibility): for all but finitely many $\ell_1\in \mathcal{Z}$, the polynomials $$\widetilde{P}_1(\ell_1,\underline{T}^\prime,\underline{Y}),\dots,\widetilde{P}_s(\ell_1,\underline{T}^\prime,\underline{Y})$$ satisfy (Prim$/{\mathcal Q}[\underline T^\prime]$).

Fix $\ell_1\in \mathcal{Z}$ such that the above property holds. Assume on the contrary that 
$$
\widetilde{P}_1(\ell_1,\underline{T}',\underline{Y}),\dots,\widetilde{P}_s(\ell_1,\underline{T}',\underline{Y})
$$ do not satisfy $\hbox{\rm (NoFixDiv}/{\mathcal Z}[\underline T^\prime])_{\mathcal P}$, that is, there exists an ideal $\mathfrak{p}$ in $\mathcal{F}_{\underline{T}'}(\widetilde{P}(\ell_1,\underline{T}^\prime,\underline{Y})) \cap \mathcal{P}$. Then $$\widetilde{P}(\ell_1, \underline{t}^\prime, \underline{Y}) \equiv 0 \pmod{\mathfrak{p}},$$ for all $\underline{t}^\prime \in \mathcal{Z}^{k-1}$. 

On the one hand, we claim that $\mathfrak{p} \notin \mathcal{S}$. Otherwise, since $\omega\ell_1+v_1 \equiv v_1 \pmod{\mathfrak{p}}$, Claim \ref{clm:1lem11we} implies that: 
$$
0\equiv \widetilde{P}(\ell_1,v_2,\dots,v_k,\underline{Y})\equiv P(\underline{v},\underline{Y}) \not \equiv 0 \pmod{\mathfrak{p}},
$$ a contradiction.

On the other hand, we claim that $\mathfrak{p} \in \mathcal{S}$. Indeed, with $\Delta'=\max_{2 \leq i \leq k}\deg_{T_i}(\widetilde{P}(\ell_1, \underline{T}',\underline{Y}))$, Lemma \ref{lem:zero} and Remark \ref{rmk:sigminter} lead to 
\begin{equation}\label{eq:frere}
    \mathscr{F}_{\underline{T}'}(\widetilde{P}(\ell_1,\underline{T}',\underline{Y})) \cap \mathcal{P} \subset \Sigma\left(\left({\rm Div}_{\mathcal{Z}}(\widetilde{P}(\ell_1,\underline{T}',\underline{Y}) \cup \Gamma_{\Delta'}\right) \cap \mathcal{E}\right)_{\mathcal{E}}\cap \mathcal{P}.
\end{equation}
\noindent
If $\mathfrak{q} \in {\rm Div}_{\mathcal{Z}}(\widetilde{P}(\ell_1,\underline{T}',\underline{Y})) \cap \mathcal{E}$, then $Q_{i}(\omega \ell_1+v_1) \equiv 0 \pmod{\mathfrak{q}}$ for $i=1,\ldots,r$. Therefore $\mathfrak{q} \in {\rm Supp}(\delta)$ and so 
${\rm Div}_{\mathcal{Z}}(\widetilde{P}(\ell_1,\underline{T}',\underline{Y})) \cap \mathcal{E} \subset {\rm Supp}(\delta) \cap \mathcal{E}
$. Combining this with the inclusion $
\Gamma_{\Delta'} \subset \Gamma_{\Delta}$, it now follows from \eqref{eq:ererer} and \eqref{eq:frere} that
$$
\mathscr{F}_{\underline{T}'}(\widetilde{P}(\ell_1,\underline{T}',\underline{Y})) \cap \mathcal{P} \subset  \Sigma\left(({\rm Supp}(\delta) \cup {\rm Div}_{\mathcal{Z}}(P) \cup \Gamma_\Delta) \cap \mathcal{E}\right)_{\mathcal{E}} \cap \mathcal{P}=\mathcal{S}.
$$ Thus $\mathfrak{p} \in \mathcal{S}$, thus contradicting the preceding paragraph. 

This completes the proof of Claim \ref{clm:existinfmanywe} and of Proposition \ref{lem:mainaxiomatic}.
\end{proof}

\subsection{Proof of Proposition \ref{lem:schcases}} \label{ssec:prop3.6}
\subsubsection{Preliminary lemmas} To prove Proposition \ref{lem:schcases}, we need the following two results.

\begin{lem}\label{lem:interrest}
Assume that $\mathcal{Z}$ is a Krull domain and let $\mathcal{E}=\s1(\mathcal{Z})$.
\begin{enumerate}
\item The primality type ${\mathcal P}={\rm Irred}$ satisfies the support finiteness property for $\mathcal{E}$. \label{lem:finirrproide}  
\item For any integer $B \geq 1$, the set $\Sigma(\Gamma_B \cap \mathcal{E})_{\mathcal{E}} \cap {\rm Irred}\hskip 1pt \mathcal{Z}$ is finite.
\label{cor:finSigmaM}
\end{enumerate}
\end{lem}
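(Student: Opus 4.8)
The statement to establish is Lemma \ref{lem:interrest}, which concerns a Dedekind domain $\mathcal{Z}$: part (1) says ${\rm Irred}\hskip 1pt\mathcal{Z}$ has the support finiteness property with $\mathcal{E}={\rm Spec}\hskip 1pt\mathcal{Z}$, and part (2) says $\Sigma(\Gamma_B)\cap{\rm Irred}\hskip 1pt\mathcal{Z}$ is finite for each $B\geq 1$. The plan is to prove (2) first, since it is exactly the hard case of condition (2b) of the support finiteness property, and the rest of (1) is comparatively soft.

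\textbf{Step 1: part (2).} Fix $B\geq 1$ and set $\Gamma=\Gamma_B=\{\mathfrak{p}\in{\rm Spec}\hskip 1pt\mathcal{Z}\mid |\mathcal{Z}/\mathfrak{p}|\leq B\}$. I claim $\Gamma$ is a finite set: in a Dedekind domain each nonzero prime lies over a nonzero prime of the prime ring (or more directly, if $|\mathcal{Z}/\mathfrak{p}|\leq B$ then the characteristic of the residue field $\mathcal{Z}/\mathfrak{p}$ is a prime $\ell\leq B$, so $\ell\in\mathfrak{p}$, and the nonzero ideal $\langle\ell\rangle$ has only finitely many prime divisors in the Dedekind domain $\mathcal{Z}$; ranging over the finitely many primes $\ell\leq B$ gives finiteness of $\Gamma$). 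If $\Gamma=\emptyset$ then $\Sigma(\Gamma_B)$ consists only of the unit ideal, which contains no irreducible-generated ideal, so the set is empty; assume then $\Gamma=\{\mathfrak{p}_1,\dots,\mathfrak{p}_m\}$ is nonempty. By definition $\Sigma(\Gamma_B)$ is the set of nonzero ideals $\mathfrak{a}$ all of whose prime divisors lie in $\Gamma$, i.e. $\mathfrak{a}=\prod_{j=1}^m\mathfrak{p}_j^{e_j}$ with $e_j\geq 0$. Now suppose such an $\mathfrak{a}=\langle a\rangle$ is principal and generated by an irreducible element $a$. The key point is a valuation/norm bound: because each $\mathfrak{p}_j$ has bounded residue field, one shows that a principal ideal all of whose prime factors are among $\mathfrak{p}_1,\dots,\mathfrak{p}_m$ and which is generated by an \emph{irreducible} element can have only bounded exponents $e_j$. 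Concretely: if $e_j\geq 2$ for some $j$, I would try to split off a proper principal factor, but this need not work directly because $\mathcal{Z}$ is not assumed a PID; instead, the correct mechanism is that the ideal class group acts. Consider the finitely many classes of $\mathfrak{p}_1,\dots,\mathfrak{p}_m$ in ${\rm Cl}(\mathcal{Z})$ generated subgroup — actually the cleanest route is: among the ideals $\prod\mathfrak{p}_j^{e_j}$, only finitely many are principal with the property that no proper subproduct $\prod\mathfrak{p}_j^{f_j}$ (with $0\leq f_j\leq e_j$, not all zero, and $\sum f_j<\sum e_j$) is principal — because if a proper subproduct were principal, then the quotient would also be principal, and the corresponding factorization $a=bc$ into nonunits would contradict irreducibility of $a$. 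So every $\langle a\rangle\in\Sigma(\Gamma_B)\cap{\rm Irred}$ is a \emph{minimal} principal product of the $\mathfrak{p}_j$'s in this subdivision order, and minimal elements of $\mathbb{N}^m$ subject to "$\prod\mathfrak{p}_j^{e_j}$ principal" form an antichain; combined with the fact that the set $\{(e_1,\dots,e_m)\in\mathbb{N}^m\mid\prod\mathfrak{p}_j^{e_j}\text{ principal}\}$ is a submonoid of $\mathbb{N}^m$ of finite index (its index is bounded by $|{\rm Cl}(\mathcal{Z})|$ restricted to the subgroup generated by the classes — if the class group is finite this is immediate; in general one uses that the images of $\mathfrak{p}_j$ generate a finitely generated, hence for these purposes controllable, subgroup of ${\rm Cl}(\mathcal{Z})$, and a minimal generating set of a submonoid of $\mathbb{N}^m$ is finite by Dickson's lemma), the minimal principal products are finite in number. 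This gives finiteness of $\Sigma(\Gamma_B)\cap{\rm Irred}\hskip 1pt\mathcal{Z}$. \emph{This is the main obstacle}: handling the case where $\mathcal{Z}$ has nontrivial (possibly infinite) class group, where one cannot simply pull out prime-power principal factors, and must instead argue via irreducibility forcing minimality plus Dickson's lemma on $\mathbb{N}^m$.

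\textbf{Step 2: part (1).} For the support finiteness property of ${\rm Irred}\hskip 1pt\mathcal{Z}$ with $\mathcal{E}={\rm Spec}\hskip 1pt\mathcal{Z}$: condition (a) requires that for nonzero $a\in\mathcal{Z}$ the set ${\rm Supp}(a)\cap{\rm Spec}\hskip 1pt\mathcal{Z}={\rm Supp}(a)$ be finite, which is precisely the statement that a nonzero ideal in a Dedekind domain has finitely many prime divisors — standard. Condition (b) requires that for any finite $\mathcal{S}\subset{\rm Spec}\hskip 1pt\mathcal{Z}$ the set $\Sigma(\mathcal{S})_{{\rm Spec}\hskip 1pt\mathcal{Z}}\cap{\rm Irred}\hskip 1pt\mathcal{Z}=\Sigma(\mathcal{S})\cap{\rm Irred}\hskip 1pt\mathcal{Z}$ be finite. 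But this follows from exactly the argument of Step 1 with $\Gamma_B$ replaced by the given finite set $\mathcal{S}$ — in fact Step 1 is the special case $\mathcal{S}=\Gamma_B$, so I would organize the write-up by proving the general statement "for any finite $\mathcal{S}\subset{\rm Spec}\hskip 1pt\mathcal{Z}$, $\Sigma(\mathcal{S})\cap{\rm Irred}\hskip 1pt\mathcal{Z}$ is finite" once, then deduce both (1b) and (2) from it, with (1a) being the standard divisor-finiteness fact and the finiteness of $\Gamma_B$ (needed to know $\Gamma_B$ is a legitimate finite input) recorded along the way. I would also double-check the edge cases $\mathcal{S}=\emptyset$ (giving the empty set, since the unit ideal is not in ${\rm Irred}$) and $\mathcal{Z}$ a field (vacuous) to make sure the statement is literally correct as phrased.
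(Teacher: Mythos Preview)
Your approach is essentially the same as the paper's: both reduce part~(1b) (and hence part~(2)) to the observation that, for a finite set $\mathcal{S}=\{\mathfrak{p}_1,\dots,\mathfrak{p}_m\}$ of primes, the exponent vectors $(e_1,\dots,e_m)$ of irreducible principal ideals supported on $\mathcal{S}$ form an antichain in $(\mathbb{N}^m,\leq)$, which is finite by Dickson's lemma. The paper reaches the antichain conclusion more directly than you do --- it simply notes that two distinct irreducible principal ideals cannot contain one another --- so your discussion of the class group and ``minimal principal products'' is an unnecessary detour, though not incorrect.

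One small gap to flag: your argument that $\Gamma_B$ is finite (``the characteristic $\ell$ of $\mathcal{Z}/\mathfrak{p}$ satisfies $\ell\in\mathfrak{p}$, and $\langle\ell\rangle$ is a nonzero ideal with finitely many prime divisors'') breaks down when ${\rm char}(\mathcal{Z})=\ell$, since then $\ell=0$ in $\mathcal{Z}$. The paper avoids this by invoking Lemma~\ref{lem:interrestgen}, which produces a nonzero $a\in\mathcal{Z}$ with $\Gamma_B\subset{\rm Supp}(a)$ in any characteristic.
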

\begin{proof}
\begin{enumerate}
\item Since $\mathcal Z$ is a Krull domain, by \cite[Page 289, III)]{Mat80}, the set ${\rm Supp}(a) \cap \mathcal{E}$ is finite for every nonzero element $a \in \mathcal Z$. Hence condition (a) of Definition \ref{def:schproide}(2) holds.
For condition (b), fix a finite subset $\mathcal{S}=\{ \mathfrak{p}_1,\dots,\mathfrak{p}_r\}\subset  \mathcal{E}$. We wish to show that the set 
$$
\Sigma(\mathcal{S})_{\mathcal{E}} \cap \mathcal{P}:=\{ (\theta) \in \mathcal{P} \mid {\rm Supp}(\theta) \cap \mathcal{E} \subset \mathcal{S} \}
$$ is finite. Define
$$
\mathcal{K}=\{ v_{\mathcal{S}}((\theta)):=(v_{\mathfrak{p}_1}(\theta),\dots,v_{\mathfrak{p}_r}(\theta)) \in \mathbb{N}^r \mid (\theta) \in \Sigma(\mathcal{S})_{\mathcal{E}} \cap \mathcal{P}\}.
$$ By \cite[Page 289, III)]{Mat80}, this is naturally in bijection with $\Sigma(\mathcal{S})_{\mathcal{E}} \cap \mathcal{P}$; Indeed, for any $(\theta) \in \Sigma(\mathcal{S})_{\mathcal{E}} \cap \mathcal{P}$, we have $(\theta)=\bigcap_{i=1}^r \mathfrak{p}_i^{(v_{\mathfrak{p}_i(\theta)})}$. Thus, it suffices to prove the following.

\begin{claim} \label{claim:finiteK}
The set $\mathcal{K}$ is finite.
\end{claim}

\begin{proof}[Proof of Claim \ref{claim:finiteK}] 
For any $\mathfrak{p} \in \mathcal{E} \setminus \mathcal{S}$ and any $(\theta) \in \Sigma(\mathcal{S})_{\mathcal{E}} \cap \mathcal{P}$, we have $v_{\mathfrak{p}}(\theta)=0$. We equip $\mathcal{K}$ with the partial order defined by $$(a_1,\dots,a_r)\leq (b_1,\dots,b_r)\quad\textrm{if and only if} \quad a_i \leq b_i \quad \textrm{for all }1 \leq i \leq r.
$$ By \cite[Page 288, I)]{Mat80}, for $(\theta), (\tau) \in \Sigma(\mathcal{S})_{\mathcal{E}} \cap \mathcal{P}$, we have 
\begin{equation}\label{eq:uncomkrull}
v_{\mathcal{S}}(\theta) \leq v_{\mathcal{S}}(\tau) \,\,\textrm{ if and only if }\,\, \theta \mid \tau \,\,\textrm{ if and only if }\,\, (\tau)=(\theta).
\end{equation} 
Consequently, distinct elements of $\mathcal{K}$ are pairwise noncomparable; in this setting, ${\bf c}$ and ${\bf d}$ are \emph{noncomparable} if ${\bf c}\not\leq {\bf d}$ and ${\bf d} \not\leq {\bf c}$. 

On the other hand, with the partial order on $\mathbb{N}^r$:
$${\bf c}=(c_1,\dots,c_r) \leq {\bf d}=(d_1,\dots,d_r)\,\, \textrm{if and only if}\,\,c_1 \leq d_1,\dots,c_r \leq d_r,$$ it is well-known that $(\mathbb{N}^r, \leq)$ does not contain an infinite family of pairwise noncomparable elements, see, e.g., \cite[Theorem 1.1(1)]{AA2020}.

Consequently $\mathcal{K}$ is finite.
\end{proof} 

\item Let $B \geq 1$ be an integer. By Lemma \ref{lem:interrestgen}, there exists a nonzero $a \in \mathcal{Z}$ such that $\Gamma_B \cap \mathcal{E} \subset {\rm Supp}(a) \cap \mathcal{E}$, so $\Gamma_B \cap \mathcal{E}$ is finite. Combining it with \eqref{lem:finirrproide}, we deduce that $\Sigma(\Gamma_B \cap \mathcal{E})_{\mathcal{E}} \cap {\rm Irred}\hskip 1pt \mathcal{Z}$ is finite.
\end{enumerate}
\end{proof}

\begin{rmk}
\label{rmk:fixed-div-finite}
At this stage, we can provide a proof of the remaining Krull case of Proposition \ref{lem:fixed-divisors-removal}. Assume that $\mathcal{Z}$ is a Krull domain, and set  $\mathcal{E}=\s1(\mathcal{Z})$.
Let $P \in \mathcal{Z}[\underline{T},\underline{Y}]$ be a nonzero polynomial.  
One may assume that $P$ has a fixed divisor in ${\rm Nonunit} \hskip 1pt \mathcal{Z}$ \hbox{w.r.t.} $\underline{T}$; otherwise, take $\varphi = 1$. Choose $\underline{m} \in \mathcal{Z}^k$ such that $P(\underline{m},\underline{Y})$ is nonzero. Pick a nonzero coefficient $a$ of $P(\underline{m},\underline{Y})$ and consider the set $\mathcal{S} = {\rm Supp}(a) \cap \mathcal{E}$. By the support finiteness property (Lemma \ref{lem:interrest}), the set 
$\mathcal{S}$ is finite, and so is the set
  
\vskip 1mm

\centerline{$\Theta = \Sigma(\mathcal{S})_{\mathcal{E}} \cap {\rm Irred} \hskip 1pt \mathcal{Z}.$}
\vskip 1mm

We claim that we can take $\varphi$ to be a generator of the principal ideal $\prod_{\mathfrak{p} \in \Theta} \mathfrak{p}$. Indeed, assume on the contrary that $P$ has a fixed divisor $\langle u/\varphi^\ell \rangle \in {\rm Nonunit} \hskip 1pt \mathcal{Z}[1/\varphi]$ \hbox{w.r.t.} $\underline{T}$, where $u \in \mathcal{Z}$ and $\ell \geq 0$. In particular,  
\[
P(\underline{m},\underline{Y})\equiv 0 \pmod{u/\varphi^\ell},
\]
so $u$ divides $a \cdot \varphi^\ell$. Then  
$$
{\rm Supp}(u) \cap \mathcal{E} \subset ({\rm Supp} (a) \cap \mathcal{E}) \bigcup \left(\left( \bigcup_{\mathfrak{p} \in \Theta} {\rm Supp}(\mathfrak{p}) \right) \cap \mathcal{E}\right) = \mathcal{S}.
$$
Consider a decomposition $u=\pi_1 \cdots \pi_e$ into a product of irreducible elements. Since ${\rm Supp}(\langle \pi_i \rangle) \cap  \mathcal{E}$ ($1 \leq i \leq e$) is also contained in $\mathcal{S}$, we have $\langle \pi_1 \rangle,\dots,\langle \pi_e \rangle \in \Theta$. Therefore, each element $\pi_i$ divides $\varphi$ ($i=1,\ldots,e$) and so $u$ divides $\varphi^e$, implying that $u$ is a unit in $\mathcal{Z}[1/\varphi]$ -- a contradiction.  

Consequently, $P$ has no fixed divisor in ${\rm Nonunit}\hskip 1pt \mathcal{Z}[1/\varphi]$ \hbox{w.r.t.} $\underline{T}$.  
\end{rmk}

\begin{lem}\label{lem:chinremthe}
Suppose that ${\mathcal Z}$ and $\mathcal{P}$ satisfy one of the following:
\begin{enumerate}[(a)]
\item $\mathcal{Z}$ is a PDF ring and $\mathcal{P} ={\rm Prime}$.\label{item:casea}
\item $\mathcal{Z}$ is a Krull domain and $\mathcal{P}= {\rm Irred}$.
\label{item:caseb} 
\end{enumerate}
Then $\mathcal{P}$ satisfies the nonvanishing approximation property.
\end{lem}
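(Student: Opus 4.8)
The plan is to treat all cases by the same two‑step scheme: first reduce the statement to a congruence problem about the coefficients of $P$, then glue the given local data with the Chinese remainder theorem. The only genuine difficulty will be that in case~(a) the relevant ideals need not be pairwise comaximal, so plain Chinese remainder is not enough and an extra argument is needed.

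For the reduction, write $P=\sum_{\underline{\nu}}C_{\underline{\nu}}(\underline{T})\,\underline{Y}^{\underline{\nu}}$ with $C_{\underline{\nu}}\in\mathcal{Z}[\underline{T}]$. Then, for any proper ideal $\mathfrak{a}$ and any $\underline{w}\in\mathcal{Z}^k$, one has $P(\underline{w},\underline{Y})\not\equiv 0\pmod{\mathfrak{a}}$ exactly when $C_{\underline{\nu}}(\underline{w})\notin\mathfrak{a}$ for some $\underline{\nu}$, and $P(\underline{w},\underline{Y})\equiv P(\underline{w}',\underline{Y})\pmod{\mathfrak{a}}$ whenever $\underline{w}\equiv\underline{w}'\pmod{\mathfrak{a}}$. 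In particular, the existence of the point $\underline{u}_{\mathfrak{a}}$ forces $P$ to be nonzero modulo $\mathfrak{a}$. So it suffices to produce a single $\underline{v}\in\mathcal{Z}^k$ that agrees, modulo each $\mathfrak{a}\in\mathcal{S}$, with some point at which $P(\cdot,\underline{Y})$ is nonzero modulo $\mathfrak{a}$.

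In case~(b) with $\mathcal{P}={\rm Spec}$ the members of $\mathcal{S}$ are finitely many distinct nonzero primes of a Dedekind domain, hence distinct maximal ideals, hence pairwise comaximal; the Chinese remainder theorem applied coordinate by coordinate produces $\underline{v}$ with $\underline{v}\equiv\underline{u}_{\mathfrak{p}}\pmod{\mathfrak{p}}$ for all $\mathfrak{p}\in\mathcal{S}$ simultaneously. With $\mathcal{P}={\rm Irred}$, factor each $\langle a\rangle\in\mathcal{S}$ as $\prod_{\mathfrak{p}}\mathfrak{p}^{\,v_{\mathfrak{p}}(a)}$; since the factors $\mathfrak{p}^{\,v_{\mathfrak{p}}(a)}$ are pairwise comaximal and $P(\underline{u}_{\langle a\rangle},\underline{Y})\not\equiv 0\pmod{\langle a\rangle}$, some prime $\mathfrak{p}$ dividing $a$ satisfies $P(\underline{u}_{\langle a\rangle},\underline{Y})\not\equiv 0\pmod{\mathfrak{p}^{\,v_{\mathfrak{p}}(a)}}$. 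The finitely many primes obtained this way are distinct, hence comaximal; for each of them, the assigned ideal of least exponent provides a point valid for all ideals assigned to it (nonvanishing modulo $\mathfrak{p}^{e}$ implies it modulo $\mathfrak{p}^{e'}$ for $e'\ge e$), and one last application of the Chinese remainder theorem over these primes with powers exceeding all the relevant exponents yields the desired $\underline{v}$.

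Case~(a) is the crux. Here $\mathcal{S}=\{\langle\pi_1\rangle,\dots,\langle\pi_m\rangle\}$ with the $\pi_i$ pairwise non‑associate prime elements, so the $\langle\pi_i\rangle$ are pairwise incomparable (a prime element dividing a prime element is associate to it) but in general not pairwise comaximal, and Chinese remainder is not directly available. I would induct on $m$. Suppose $\underline{v}'\in\mathcal{Z}^k$ is already good for $\pi_1,\dots,\pi_{m-1}$, and put $\rho=\pi_1\cdots\pi_{m-1}$, so $\rho\in\bigcap_{j<m}\langle\pi_j\rangle$ while $\rho\notin\langle\pi_m\rangle$. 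If $\langle\pi_m\rangle+\langle\rho\rangle=\mathcal{Z}$, solve $\underline{v}\equiv\underline{v}'\pmod{\langle\rho\rangle}$ and $\underline{v}\equiv\underline{u}_{\langle\pi_m\rangle}\pmod{\langle\pi_m\rangle}$ by the Chinese remainder theorem. Otherwise $\langle\pi_m\rangle$ is not maximal (a maximal ideal is comaximal with any ideal it does not contain, and $\langle\rho\rangle\not\subseteq\langle\pi_m\rangle$), so the integral domain $D=\mathcal{Z}/\langle\pi_m\rangle$ is infinite; the coset $\underline{v}'+\langle\rho\rangle^k$ reduces modulo $\pi_m$ to $\overline{\underline{v}'}+(\overline{\rho}\,D)^k$ with $\overline{\rho}\neq0$, and substituting $\underline{T}=\overline{\underline{v}'}+\overline{\rho}\,\underline{S}$ into a nonzero $\underline{Y}$‑coefficient of $P$ modulo $\pi_m$ (one exists since $\pi_m\nmid P$) is an affine change of variables, invertible over ${\rm Frac}(D)$, hence gives a nonzero polynomial in $D[\underline{S}]$, which, $D$ being an infinite domain, does not vanish identically on $D^k$. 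Picking such a non‑root $\underline{S}_0$, lifting it to $\underline{s}_0\in\mathcal{Z}^k$, and setting $\underline{v}=\underline{v}'+\rho\,\underline{s}_0$ keeps $\underline{v}$ good for $\pi_1,\dots,\pi_{m-1}$ and makes $P(\underline{v},\underline{Y})\not\equiv0\pmod{\pi_m}$, completing the induction. This argument is uniform in $k$ (unlike the approach of \cite{BDKN22}, where the one‑parameter case is isolated first). The step I expect to need the most care is exactly the non‑comaximal case just described — making precise that failure of comaximality forces an infinite residue domain and that the affine perturbation along $\langle\rho\rangle$ genuinely escapes the bad locus modulo $\pi_m$; the rest is routine Chinese‑remainder bookkeeping.
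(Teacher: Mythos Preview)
Your proof is correct and follows essentially the same approach as the paper. For case~(b) your factor-by-primes, choose-minimal-exponent, then CRT argument is the paper's proof almost verbatim (your phrase ``the finitely many primes obtained this way are distinct'' is loose---different $\langle a\rangle$'s may select the same prime---but your next clause, grouping by prime and taking the least exponent, shows you handle this correctly). For case~(a) the paper simply invokes \cite[Lemma~3.8]{BDKN22} as a black box (splitting $\mathcal S$ into maximal and non-maximal primes and treating each piece), whereas you give a self-contained induction adding one prime at a time; the engine is identical in both---CRT when the new prime is comaximal with the product of the old ones, and the ``non-maximal $\Rightarrow$ infinite residue domain $\Rightarrow$ a nonzero polynomial has a non-root'' argument otherwise---so this is a presentational rather than a substantive difference.
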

\begin{proof}
Let $P\in \mathcal{Z}[\underline{T},\underline{Y}]$ be a nonzero polynomial, let $\mathcal{S} \subset \mathcal{P}$ be a finite set, and let $(\underline{u}_{\mathfrak{a}})_{\mathfrak{a} \in \mathcal{S}} \subset \mathcal{Z}^k$ be such that: $$
P(\underline{u}_{\mathfrak{a}},\underline{Y})\not\equiv 0 \pmod{\mathfrak{a}}\,\,\,\,\text{for all}\,\, \mathfrak{a} \in \mathcal{S}.
$$

\noindent
\emph{Case (\ref{item:casea})}. The proof follows from that of \cite[Lemma 4.2]{BDKN22}. For the convenience of the reader, we repeat the argument. Denote by \( \mathcal{S}_1 \subset \mathcal{S} \) the subset of prime ideals \( \mathfrak{p} \) that are maximal ideals. We apply \cite[Lemma 3.8]{BDKN22}. From above, \( P \) is nonzero modulo each \( \mathfrak{a} \in \mathcal{S} \setminus \mathcal{S}_1 \), and we have \( \mathfrak{a} \not\subset \mathfrak{a}'\) for any distinct \( \mathfrak{a}, \mathfrak{a}' \in \mathcal{S}\) (as $\mathfrak{a}$ and $\mathfrak{a}'$ are principal ideals generated by prime elements). Thus, \cite[Lemma 3.8]{BDKN22} provides \( \underline{v} = (v_1, \dots, v_k) \in \mathcal{Z}^k \) such that
\[
\underline{v} \equiv \underline{u}_{\mathfrak{a}} \pmod{\mathfrak{a}} \quad \text{for all prime } \mathfrak{a} \in \mathcal{S}_1,
\]
and
\[
P(\underline{v}, \underline{Y}) \not\equiv 0 \pmod{\mathfrak{a}} \quad \text{for all } \mathfrak{a} \in \mathcal{S} \setminus \mathcal{S}_1.
\]
These congruences imply that
\[
P(\underline{v}, \underline{Y}) \not\equiv 0 \pmod{a} \quad \text{for all } \mathfrak{a} \in \mathcal{S}.
\]
\vskip 1mm

\noindent
\emph{Case (\ref{item:caseb})}. Let $
\mathcal{E}=\s1(\mathcal{Z})$.
Since $\mathcal{S}$ is finite, by \cite[Page 289, III)]{Mat80}, the set 
$$
\mathcal{R}=\bigcup_{\mathfrak{a} \in \mathcal{S}}({\rm Supp}(\mathfrak{a}) \cap \mathcal{E})
$$ is finite, and writing $\mathcal{R}=\{\mathfrak{p}_1,\dots,\mathfrak{p}_m\}$ ($m \geq 1$), we have, for each $\mathfrak{a} \in \mathcal{S}$
$$
\mathfrak{a}=\mathfrak{p}_1^{(e_1(\mathfrak{a}))}\cap \dots \cap \mathfrak{p}_m^{(e_m(\mathfrak{a}))} ,
$$ where 
$$(e_1(\mathfrak{a}),\dots,e_m(\mathfrak{a}))=(v_{\mathfrak{p}_1}(\mathfrak{a}),\dots,v_{\mathfrak{p}_m}(\mathfrak{a}))\in \mathbb{N}^m.
$$ 

For each 
$i=1,\ldots, m$, set
$$
\mathcal{S}_i=\left\lbrace \mathfrak{a}\in \mathcal{S} \mid e_i(\mathfrak{a})\geq 1\,\, \textrm{and}\,\,P(\underline{u}_{\mathfrak{a}},\underline{Y})\not\equiv 0 \pmod{\mathfrak{p}_i^{(e_i(\mathfrak{a}))}}\right\rbrace.
$$
Let 
$$
J=\{ j \in \{1,\dots,m \} \mid \mathcal{S}_j\neq \emptyset\}.
$$ 
For each $j \in J$, choose $\mathfrak{a}_j \in \mathcal{S}_j$ such that $$
e_j(\mathfrak{a}_j)=\min_{\mathfrak{a} \in \mathcal{S}_j}(e_j(\mathfrak{a})) \geq 1.
$$ By the weak approximation theorem  (Lemma \ref{lem:weakappro}), there exists $\underline{v} \in {\mathcal Z}^k$ such that
$$
\underline{v} \equiv \underline{u}_{\mathfrak{a}_j} \pmod{\mathfrak{p}_j^{(e_j(\mathfrak{a}_j))}}\quad \textrm{for all } j \in J.
$$  

To complete the proof, we show that 
$P(\underline{v},\underline{Y}) \not \equiv 0 \pmod{\mathfrak{a}}\hskip 1mm \textrm{for all }\mathfrak{a} \in \mathcal{S}$.
Let $\mathfrak{a} \in \mathcal{S}$. Since $P(\underline{u}_{\mathfrak{a}},\underline{Y})\not\equiv 0 \pmod{\mathfrak{a}}$, there exists $j \in \{1,\dots,m\}$ with $e_j(\mathfrak{a}) \geq 1$ such that $P(\underline{u}_{\mathfrak{a}},\underline{Y})\not\equiv 0 \pmod{\mathfrak{p}_j^{(e_j(\mathfrak{a}))}}$; in other words, $\mathfrak{a} \in \mathcal{S}_j$. As 
$$P(\underline{v},\underline{Y}) \equiv P(\underline{u}_{\mathfrak{a}_j},\underline{Y}) \not\equiv 0 \pmod{\mathfrak{p}_j^{(e_j(\mathfrak{a}_j))}}
$$ and $e_j(\mathfrak{a}) \geq e_j(\mathfrak{a}_j)$, we obtain $$
P(\underline{v},\underline{Y}) \not\equiv 0 \pmod{\mathfrak{p}_j^{(e_j(\mathfrak{a}))}}.
$$ Therefore we can conclude that
\vskip 1mm

\hskip 55mm {$P(\underline{v},\underline{Y}) \not\equiv 0 \pmod{\mathfrak{a}}.
$} \end{proof}

\subsubsection{End of proof of Proposition \ref{lem:schcases}}\label{ssc:enproof36} The three cases of Lemma \ref{lem:main} are cases (1),(2),(3) below. Lemma \ref{lem:chinremthe} ensures the nonvanishing approximation property for the \hbox{(equivalent to ${\mathcal P}$)} primal\-ity type $\mathcal{P}'$ as chosen below in each case. It remains to check the \hbox{support finiteness property.}
\vskip 1mm
\noindent
(1) Let $\mathcal{Z}$ be a Krull domain. For $\mathcal{P}={\rm Prime}$, the support finiteness property holds since $\mathcal{Z}$ is a PDF ring by Remark \ref{rmk:finmanydiv}. For $\mathcal{P} \in \{  {\rm Irred}, {\rm Nonunit}\}$, take $\mathcal{P}'={\rm Irred}$ and $\mathcal{E}=\s1(\mathcal{Z})$. The support finiteness property then follows from Lemma \ref{lem:interrest}.  
\vskip 1mm
\noindent
(2) Let $\mathcal{Z}$ be a  PDF ring and $\mathcal{P}={\rm Prime}$. Take $\mathcal{P}'=\mathcal{P}$ and $\mathcal{E}=\mathcal{P}$. The support finiteness property holds from the definition of PDF ring.\label{seccase:ss}
\vskip 1mm
\noindent
(3) Let $\mathcal{Z}$ be a near UFD and $\mathcal{P} \in \{ {\rm Irred}, {\rm Prime}, {\rm Nonunit}\}$. Take $\mathcal{P}'={\rm Prime}$ and $\mathcal{E}={\rm Prime}$. The support finiteness property holds from the definition of near UFDs.

\section{Polynomial rings are locally Schinzel: Proof of Theorem \ref{thm:main1-2}} 
\label{sec:poly-rings}

Here we show Theorem \ref{thm:main1-2}. We begin with a lemma linking fixed divisors and ``actual'' divisors over polynomial rings.

\begin{lem}
\label{lem:fixeddiv}
Let $\underline{T}=(T_1,\dots,T_k)$ and $\underline{Y}=(Y_1,\dots,Y_n)$ with $k ,n\geq 1$. Let $\mathcal{Z}=\mathcal{R}[U]$ for an infinite domain $\mathcal{R}$.  Then every fixed divisor $\pi\in \mathcal{Z}$ of a polynomial $f \in \mathcal{Z}[\underline{T},\underline{Y}]$ w.r.t. $\underline{T}$ is a divisor of $f$ in $\mathcal{Z}[\underline{T}]$. 
\end{lem}

\begin{proof}
Without loss of generality, we may assume that $f$ belongs to $\mathcal{Z}[\underline{T}]$. Indeed, $\pi \in \mathcal{Z}$ is a fixed divisor of $f \in \mathcal{Z}[\underline{T},\underline{Y}]$ w.r.t. $\underline{T}$ if and only it is a fixed divisor w.r.t. $\underline{T}$ of each of its $\mathcal{Z}[\underline{T}]$-coefficients. 
\vskip 1mm
\noindent
{\bf Step 1: The case $k=1$.} Write
$$
f=a_d(U)T_1^d+\dots+a_1(U)T_1+a_0(U) \in \mathcal{Z}[T_1]
$$
and suppose $\pi \in \mathcal{Z}$ is a fixed divisor of $f$ w.r.t. $T_1$. Since $\mathcal{R}$ is infinite, there exists $s \in \mathcal{R}$ such that for the automorphism 
$$\varphi_s: U \in \mathcal{Z} \mapsto U-s \in \mathcal{Z},
$$ the polynomial $\varphi_s(\pi)$, viewed as a polynomial in $U$,  has no root in 
\begin{equation}\label{eq:roote}
\{0\} \cup \{w \in \overline{{\rm Frac}(\mathcal{R})} \mid w^i=1\quad\textrm{for some }i=1,\dots,d \}.
\end{equation} Clearly $\varphi_s(\pi)$ is a fixed divisor of $\varphi_s(f)$ w.r.t. $T_1$. Hence, we may assume without loss of generality that $\pi$ itself has no root in \eqref{eq:roote}. Since $\pi$ is fixed divisor, we have
$$
f(U^i)=a_d(U)U^{id}+\dots+a_1(U)U^i+a_0(U) \equiv 0 \pmod{\pi}\quad i=0,\dots,d.
$$
Using Vandermonde determinant, this yields
$$
\prod_{0 \leq i < j\leq d}(U^j-U^i) (a_0(U),\dots,a_d(U)) \equiv 0 \pmod{\pi}.
$$
 We claim that none of the factors $U^j-U^i$ is a zero divisor modulo $\pi$. Indeed, suppose 
$$U^j(U^{j-i}-1)g=(U^j-U^i)g= \pi r
$$ for some $g,r \in \mathcal{Z}$. Since $0 \leq j-i \leq d$ and $\pi$ has no root in \eqref{eq:roote}, it follows that $\pi$ and $U^j(U^{j-i}-1)$ are coprime in ${\rm Frac}(\mathcal{R})[U]$. Hence $U^{j}(U^{j-i}-1)$ divides $r$ in ${\rm Frac}(\mathcal{R})[U]$, and therefore also in $\mathcal{Z}=\mathcal{R}[U]$ since $U^j(U^{j-i}-1)$ is monic. Thus $U^j-U^i$ is a not a zero divisor modulo $\pi$. It follows that $\pi$ divides $a_0(U),\dots,a_d(U)$ as desired.
\vskip 1mm
\noindent
{\bf Step 2: The general case.} We proceed by induction on $k$. The case $k=1$ has been established. Assume now that $k \geq 2$ and let $\underline{T}'=(T_2,\dots,T_k)$, and suppose the statement holds for $1,\dots,k-1$ variables. Write
$$
f=b_\ell(\underline{T}')T_1^\ell+ \dots+b_1(\underline{T}')T_1+b_0(\underline{T}')\,\,\, (b_i(\underline{T}') \in \mathcal{Z}[\underline{T}']).
$$
Fix $(v_2,\dots,v_{k}) \in \mathcal{Z}^{k-1}$. Then $\pi$ is a fixed divisor of $f(T_1,v_2,\dots,v_d)$ w.r.t. $T_1$. By the one-variable case, $\pi$ divides each coefficient 
$$
b_j(v_2,\dots,v_k),\quad j=1,\dots,d
$$ Since $(v_2,\dots,v_k)$ is arbitrary, the induction hypothesis applied to the polynomials $b_j(\underline{T}')$ implies that $\pi$ divides each $b_j(\underline{T}')$. Hence $\pi$ divides $f$.
\end{proof}

Building on Lemma \ref{lem:fixeddiv}, the following is the crucial ingredient in proving (a more explicit version of) Theorem \ref{thm:main1-2}.

\begin{prop} \label{prop:R[U]-loc-Schinzel-ring}
Let $\mathcal R$ be an infinite domain, let $K:=\textrm{\rm Frac}(\mathcal{R})$, and set $\mathcal{Z} = \mathcal{R}[U]$. 
Let $k,n,s\ge 1$, $\underline{T} := (T_1,\dots, T_k)$,
$\underline T':=(T_2,\dots, T_k)$ and $\underline Y:=(Y_1,\dots, Y_n)$.\\
Let $\mathcal{P}\in \{\rm{Nonunit}\hskip 1pt \mathcal{Z}, \rm{Irred} \hskip 1pt \mathcal{Z}, \rm{Prime} \hskip 1pt \mathcal{Z}\}$ be a primality type, and let $P_1(\underline T, \underline Y), \dots, P_s(\underline T,\underline Y) \in {\mathcal Z}[\underline T,\underline Y]$ be polynomials satisfying the following.
\begin{itemize}
\item[i)] $P_1\cdots P_s$ has no divisor in $\mathcal{P}$.
\item[ii)] When viewed as elements of $K[U][\underline{T}, \underline{Y}]$, none of $P_1,\dots, P_s$ has a divisor in $K[U,T_1]\setminus K[U]$.
\end{itemize}
Then there exists a polynomial $h(U)\in \mathcal{Z}\setminus\{0\}$ such that for all sufficiently large integers $C$ (with the bound depending on $P_1,\dots, P_s$), there is a finite set $S\subset\mathcal{R}$ fulfilling the following:\\
For all $t_1(U)\in \mathcal{Z}$ of the form 
$$t_1(U)  = U^{2C} h(U) g(U) + U^C + \tau,$$
with $g(U)\in \mathcal{Z}$ arbitrary and $\tau\in \mathcal{R}\setminus S$, the polynomial
$\prod_{i=1}^s P_i(t_1(U),\underline T', \underline Y)\in \mathcal{Z}[\underline T', \underline Y]$ does not have a divisor in $\mathcal{P}$.
\end{prop}

\begin{proof}
Let $P:=P_1\cdots P_n$.  Let $c_1(U,T_1),\dots, c_N(U,T_1)\in \mathcal R[U,T_1]$ be the coefficients of $P$ (viewed as a polynomial in $\underline {T}', \underline Y$). By Assumption i), there is no $d\in \mathcal P$ 
dividing all the $c_i$. 

To show the assertion that $\prod_{i=1}^s P_i(t_1(U),\underline{T}',\underline{Y})$ does not have a divisor in $\mathcal{P}$ (for $t_1(U)$ as given above), we distinguish the two cases of $d\in \mathcal{P}\cap \mathcal R$ and of $d\in \mathcal{P}\cap(\mathcal R[U]\setminus \mathcal R)$.
\vskip 1mm

\noindent
\emph{Case 1)} To treat the case of divisors $d\in \mathcal{P}\cap \mathcal{R}$,  choose an integer $C$ larger than the maximal $U$-degree of $c_i(U,T_1)$, $i=1,\dots, N$. For arbitrary $g(U)\in \mathcal{Z}$ and $\tau\in \mathcal{R}$, set 
\vskip 1mm

\centerline{$t_1(U) = U^{2C} g(U) + U^C + \tau$.}
\vskip 1mm

\noindent
Up to replacing $T_1$ by $T_1':=T_1-\tau$, we may reduce to the case $\tau=0$, and will therefore assume this in the following argument. We claim that $c_1(U,t_1(U)),\dots, c_N(U,t_1(U))$ have no common (non-unit, resp.\ irreducible, resp.\ prime, depending on the primality type $\mathcal{P}$) divisor $d\in \mathcal{R}$. Assume on the contrary the existence of such a divisor $d$. Write $c_i(U,T_1) = \sum_{j=0}^{d_i}\gamma_{i,j}(U) T_1^j$ with $\gamma_{i,j}\in \mathcal{Z}$, and expand $$c_i(U,t_1(U)) = \sum_{j=0}^{d_i}\gamma_{i,j}(U)\cdot (U^{2C} g(U) + U^C)^j.$$ 
 Note that, by choice of $C$, the terms of $\gamma_{i,0}(U)$ are exactly the ones of degree $<C$ in this expansion, whence $d$ must divide $\gamma_{i,0}$. We may thus replace $c_i$ by $c_i-\gamma_{i,0}(U)$ and now continue iteratively. I.e., we assume that $\gamma_{i,j}(U)$ is divisible by $d$ for all $j$ up to some bound $k<d_i$, and consider the remaining sum $\sum_{j=k+1}^{d_i} \gamma_{i,j}(U)\cdot (U^{2C} g(U) + U^C)^j$. Here, the lowest-degree terms form the sum $U^{C(k+1)}\gamma_{i,k+1}(U)$, allowing us to conclude that $d$ must divide  $\gamma_{i,k+1}(U)$ as well, and thus must eventually divide all the $\gamma_{i,j}(U)$. In conclusion, $d$ divides all the $c_i$. But then $d$ is a divisor of $P$, contradicting the assumptions of the theorem.

Note that this first case has required no exceptions among the values $\tau\in \mathcal{R}$ and no restrictions on the polynomial $h(U)$ from the assertion.
\vskip 1mm

\noindent
\emph{Case 2)} To treat the case of divisors $d\in \mathcal{P}\cap(\mathcal R[U]\setminus \mathcal R)$, denote by $C_1,\dots, C_N$ the affine plane curves over $\overline{K}$ given by the equations $c_1(U,T_1)=0,\dots, c_N(U,T_1)=0$, respectively. 
We will for the moment make the following extra assumption (*), to which the general case will be reduced:


\vskip 1mm
\noindent
(*) \emph{The coefficients $c_1(U,T_1),\dots, c_N(U,T_1)$ have no non-unit common divisor in $K[U]$.}

\vskip 1mm

Under Assumption (*), it follows that the curves $C_1,\dots, C_N$ do not have a common irreducible component. Indeed, otherwise $c_1(U,T_1),\dots, c_N(U,T_1)$ would have an irreducible common divisor in the UFD $K[U,T_1]$. But if such a divisor has positive $T_1$-degree, it would divide one of the $P_i$, contradicting Assumption ii);
on the other hand, an irreducible divisor in $K[U]$ would contradict Assumption (*).

By B\'ezout's theorem, there are then only finitely many points  $A_1,\dots, A_m\in \mathbb{A}^2(\overline{K})$ in which all the $C_i$ intersect.
Let $h(U)\in \mathcal{Z}\setminus\{0\}$ be such that $h(\alpha)=0$ for all $\alpha$ occurring as the $U$-coordinate of some $A_i$, $i=1,\dots, m$. Concretely, one may choose $h(U)$ as the product of minimal polynomials of these $\alpha$ over $K$, multiplied by a suitable constant in $\mathcal{R}$ to clear denominators. 
Next, given any $C\in \mathbb{N}$, let $S (=S(C))\subset \mathcal{R}$ be the finite set of values $\beta\in\mathcal{R}$ such that some $A_i$, $i=1,\dots, m$, has $(U,T_1)$-coordinates $(\alpha, \alpha^C + \beta)$, $\alpha\in \overline{K}$.

For $\tau\in \mathcal{R}\setminus S$ and for
$\tilde{g}(U)\in \mathcal{Z}$ arbitrary, set $t_1(U) = \tilde{g}(U)h(U) + U^C + \tau$, and define the curve $C_{N+1}$ via $C_{N+1}: T_1 = t_1(U)$. By definition of $S$ and $\tau$, $C_{N+1}$ does not pass through any of the points $A_i$, $i=1,\dots, m$. This implies that $c_1(U,t_1(U)),\dots, c_N(U,t_1(U))$ have no common non-constant divisor $d\in \mathcal{R}[U]\setminus \mathcal{R}$, since indeed the roots $u$ of such a divisor in $\overline{K}$ would by definition induce simultaneous intersection points $(u,t_1(u))$ of the curves $C_1,\dots, C_{N+1}$.

\vskip 1mm

\noindent
\emph{General situation of 2)}. Clearly, the set of values $t_1(U)$ given in the assertion of the proposition fulfills the requirements of both the sets given in Cases 1) and 2). To finish the proof of the proposition, it thus remains to treat the general situation of 2), i.e., without assuming the additional  condition (*).  

Assume to this end that $\delta(U)\in K[U]$ is the greatest common divisor (necessarily of $T_1$-degree $0$ due to Assumption ii)) of the coefficients $c_1(U,T_1),\dots, c_N(U,T_1)$, viewed as elements of $K[U,T_1]$.
Set 
$\tilde{c}_i : =c_i/\delta\in K[U,T_1]$.
Since the $\tilde{c}_i$ satisfy Assumption (*), we may conclude from the above (applied with $K$ in place of $\mathcal{R}$) that the $\tilde{c}_i(U,t_1(U))$, $i=1,\dots, N$, have no nonunit common divisor in $K[U]$ as soon as $t_1(U)$ is of the form 

\begin{equation} 
\label{eq:poly_shape} 
t_1(U)= U^{2C}g(U)h(U) + U^C + \tau
\end{equation}


\noindent
(for $g(U)\in K[U]$ arbitrary, $h(U)\in K[U]$ suitable, and $\tau\in K$ away from a suitable finite set depending on $C$). In particular, there is an infinite subfamily of values 
$t_1(U)$ satisfying \eqref{eq:poly_shape} and additionally lying in $\mathcal R[U]$.\footnote{One should only multiply $h(U)\in K[U]$ by a suitable constant in $\mathcal{R}$ in order to replace it by a polynomial in $\mathcal{R}[U]$, which is possible due to $g(U)$ being arbitrary.} 

Fix  $t_1(U)\in \mathcal R[U]$ as above. Assume on the contrary that $c_1(U,t_1(U)),\dots,c_N(U,t_1(U))$ have a common divisor $d(U) \in \mathcal{R}[U] \setminus \mathcal{R}$. It follows from the $\tilde{c}_i(U,t_1(U))$, $i=1,\dots, N$ having no nonunit common divisor in $K[U]$ that $d(U)$ divides $\delta(U)$ in $K[U]$.
By Assumption i), there exists $1 \leq i_0 \leq N$ such that $c_{i_0}(U,T_1)$ is not divisible in ${\mathcal R}[U,T_1]$ by $d(U)$. 
As in Case 1), up to applying a shift in the variable $T_1$, we may and will assume $\tau=0$ for the following. 
Then
$$
c_{i_0}(U,t_1(U))=\sum_{j=0}^{d_{i_0}} \gamma_{i_0,j}(U)\left(U^{2C}g(U)h(U) + U^C\right)^j.
$$
For each $0 \leq j \leq d_{i_0}$, since $\delta(U)$  divides $\gamma_{i_0,j}(U)$ and $d(U)$ divides $\delta(U)$ in $K[U]$, we have 
$$
\tilde{\gamma}_{i_0,j}(U):=\frac{\gamma_{i_0,j}(U)}{d(U)} \in K[U].
$$
Then
$$
\sum_{j=0}^{d_{i_0}} \tilde{\gamma}_{i_0,j}(U)\left(U^{2C}g(U)h(U) + U^C\right)^j=\frac{c_{i_0}(U,t_1(U))}{d(U)} \in \mathcal{R}[U].
$$ By the same argument as Case 1), one shows inductively that, for every $0 \leq j \leq d_{i_0}$, one has 
$$\tilde{\gamma}_{i_0,j}(U) \in \mathcal{R}[U].
$$ It follows that $d(U)$ divides $c_{i_0}(U,T_1)$, a contradiction.
\end{proof}

\begin{proof}[Proof of Theorem \ref{thm:main1-2}]
By Proposition \ref{prop:schinzel+Hil=intH}, since $\mathcal{Z}=\mathcal{R}[U]$ is a Hilbertian ring, the integrally Hilbertian property is implied by the locally Schinzel property, i.e., it suffices to prove the latter.
 When ${\mathcal R}$ is a finite domain, it is a finite field, and ${\mathcal Z}={\mathcal R}[U]$ is a PID, hence a locally Schinzel domain as well. We may and will therefore assume that $\mathcal{R}$ is infinite. Let $\mathcal{Q}=\textrm{Frac}(\mathcal{Z})$, let $\mathcal{P}\in \{\rm{Nonunit} \mathcal{Z}, \rm{Irred} \mathcal{Z}, \rm{Prime} \mathcal{Z}\}$ be a primality type, and $P_1(\underline T, \underline Y), \dots, P_s(\underline T,\underline Y)$ be polynomials satisfying conditions {\rm (Prim}$/{\mathcal Q}[\underline T])$ and $\hbox{\rm (NoFixDiv}/{\mathcal Z}[\underline T])_{\mathcal P}$ from \S \ref{ssec:int-Hilb-ring}. Let $P:=P_1\cdots P_s$. Due to Condition $\hbox{\rm (NoFixDiv}/{\mathcal Z}[\underline T])_{\mathcal P}$, $P$ is in particular not divisible by any element in $\mathcal{P}$, and due to Condition {\rm (Prim}$/{\mathcal Q}[\underline T])$, none of $P_1,\ldots, P_s$ is divisible by any element in $\textrm{Frac}(\mathcal{R})[T_1,U] \setminus \textrm{Frac}(\mathcal{R})[U]$. 
 By Proposition \ref{prop:R[U]-loc-Schinzel-ring}, 
 there exist full arithmetic progressions 
 $\omega \mathcal{Z} + \alpha \subset\mathcal{Z}$
 of values $t_1(U)$ for which
 the polynomial $P(t_1(U), \underline{T}',\underline{Y})\in \mathcal{Z}[\underline T', \underline{Y}]$ is not divisible by any element in $\mathcal{P}$. 
 Concretely, Proposition \ref{prop:R[U]-loc-Schinzel-ring} gives $\omega = U^{2C}h(U)$ for suitably large $C\in \mathbb{N}$ and suitable $h(U)\in \mathcal{Z}\setminus \{0\}$, and $\alpha = U^C + \tau$ for all $\tau\in \mathcal{R}$ away from a finite set (depending on $C$). 
 By Lemma \ref{lem:fixeddiv}, this implies that $P_1(t_1(U),\underline{T}',\underline{Y}),\dots, P_s(t_1(U), \underline{T}',\underline{Y})$ satisfy $\hbox{\rm (NoFixDiv}/{\mathcal Z}[\underline T'])_{\mathcal P}$.

Moreover, condition {\rm (Prim}$/{\mathcal Q}[\underline T])$ implies in particular that the coefficients w.r.t. $\underline{Y}$ of $P_1(\underline{T},\underline{Y}), \dots, P_s(\underline{T},\underline{Y})$, viewed as elements of $\mathcal{Q}(T_1)[\underline{T}']$, are coprime, thus satisfying Condition ii) of \cite[Proposition 3.1]{BD22}, applied with $\underline{a}=T_1$ and $\underline{x}=\underline{T}'$. Due to the equivalence ii)$\Leftrightarrow$iii) of  \cite[Proposition 3.1]{BD22}, there exists a finite subset $Z\subseteq \mathcal{Q}$ such that for all $t_1\in \mathcal{Q}\setminus Z$, the specializations of the above coefficients are coprime as elements of $\mathcal{Q}[\underline{T}']$, i.e., the polynomials $P_1(t_1,\underline{T}', \underline{Y}), \dots, P_s(t_1,\underline{T}', \underline{Y})$ satisfy {\rm (Prim}$/{\mathcal Q}[\underline T'])$. In particular, up to increasing the lower bound on the exponent $C\in \mathbb{N}$ above, we may assume that this conclusion holds for all the values $t_1(U)\in \omega\mathcal{Z}+\alpha$ exhibited above. As a consequence, $\mathcal{Z}$ is locally Schinzel.
\end{proof}

\section{Polynomial Schinzel rings} \label{sec:applications}

 \S \ref{ssec:schinzel-rings} introduces Schinzel rings (Definition \ref{def:schinzel}). This notion recasts in a common environment the celebrated Schinzel Hypothesis (thanks to Theorem \ref{thm:schinzel1=>Schinzelk}) and our results on polynomial rings. These are Theorem \ref{thm:main3} and Theorem \ref{thm:hypschistronggen}, which are more precise forms of Theorem \ref{thm:schinzel-main}.
 They are stated and commented on in \S \ref{SchH-general} and in \S \ref{ssec:hypschistronggen}. Their proofs are postponed to Section \ref{sec:proofs}.
The primality type is tacitly taken to be ${\mathcal P} = {\rm Nonunit}$ throughout the section.

\subsection{Schinzel rings} \label{ssec:schinzel-rings}
\subsubsection{Fields with a product formula}
\label{sssec:FDP}
Recall from \cite[\S 17.3]{FJ23} that a field ${\mathcal Q}$ equipped with a nonempty set $S$ of primes ${\frak p}$, with associated absolute value $|\hskip 2mm |_{\frak p}$, is said to satisfy the \emph{product formula} if for each ${\frak p}\in S$, there exists $\beta_{{\frak p}} >0$ such that:
\vskip 1mm

\noindent
(*) {\it For each $a\in {\mathcal Q}^\times$, the set $\{{\frak p}\in S \hskip 2pt | \hskip 2pt |a|_{\frak p} \not=1 \}$ is finite and $\prod_{{\frak p}\in S} |a|_{\frak p}^{\beta_{{\frak p}}} = 1$.}

\vskip 1mm

The field ${\mathbb Q}$ is a typical example: the product formula is: $\prod_{p} |a|_p \cdot |a| = 1$ for every $a\in {\mathbb Q}^\ast$, where $p$ ranges over all prime numbers, $|\cdot|_p$ is the $p$-adic absolute value and $|\cdot |$ is the standard absolute value. Rational function fields $\kappa(u_1,\ldots,u_r)$ in $r\geq 1$ variables over an arbitrary field $\kappa$, and finite extensions of fields with a product formula are other examples \cite[\S 17.3]{FJ23}. 

From a result of Weissauer, fields with a product formula, of characteristic $0$ or imperfect, are Hilbertian fields \cite[Theorem 17.3.3]{FJ23}. A domain ${\mathcal Z}$, of characteristic $0$ or imperfect, such that the fraction field ${\mathcal Q}$ has a product formula is a Hilbertian ring \cite[Theorem 4.6]{BDN20b}.

Associated to a product formula, relative to a set $S$ of primes, comes a natural height on ${\mathcal Q}$ -- the \emph{Weil height} -- which we denote by $H_S$ and  is defined as follows: for every $a\in {\mathcal Q}^\times$,
\vskip 1mm

\centerline{$H_S(a) = \prod_{{\frak p} \in S} \max(1,|a|_{\frak p})^{\beta_{\frak p}}$.}

\vskip 1mm
\noindent
Clearly $H_S(a^n) = H_S(a)^n$ ($n\in {\mathbb N}$) and 
$H_S(1/a) = H_S(a)$ if $a\not=0$. 
When ${\mathcal Q} = {\mathbb Q}$, $H_S$ is the usual absolute value. On the field $\kappa(\underline Y)$, each variable $Y_i$ induces a partial degree Weil height: $H_{i}(\cdot) = 2^{\deg_{u_i}(\cdot)}$, $i=1,\ldots, n$.


\subsubsection{Definition of Schinzel rings}

\begin{defn} \label{def:schinzel} 
Let ${\mathcal Z}$ be domain such that the fraction field ${\mathcal Q}$ is equipped with several sets of primes $S_1,\ldots,S_n$ satisfying the product formula. Let $H_1,\ldots,H_n$ be the corresponding Weil heights. The ring  ${\mathcal Z}$ is called a \emph{Schinzel ring \hbox{w.r.t. the heights $H_1,\ldots,H_n$}} if the following holds. Let $\underline P$ be a finite set of polynomials $P_1,\ldots,P_s \in {\mathcal Z}[\underline T]$,  irreducible in $\mathcal{Q}[\underline T]$
and such that the product $P_1\cdots P_s$ has no fixed divisor in ${\mathcal Z}$ \hbox{w.r.t.} $\underline T$.
Let $A$ be a positive real number. Then the following subset of ${\mathcal Z}^k$ is Zariski-dense:
\vskip 1mm

\centerline{\hskip 6mm ${\mathcal S}_{\mathcal{Z}}(\underline P,A)
= \left\{\underline t = (t_1,\ldots,t_k)\in {\mathcal Z}^k \hskip 1pt \left\vert\hskip 1pt \begin{array}{c}
     P_1(\underline t), \ldots, P_s(\underline t) \ \hbox{\rm are irreducible in ${\mathcal Z}$}, \hbox{and} \hfill \hfill \\
     H_i(t_j) \geq A,\ \hbox{for all } j=1,\ldots,k \ \hbox{and}\  i=1,\ldots,n. \hfill \hfill \\
    \end{array}\right. \right\} 
    $.}
\end{defn}

The original Schinzel Hypothesis corresponds to the special case ${\mathcal Z}={\mathbb Z}$ and $k=1$, but is in fact equivalent to the full definition of ``${\mathbb Z}$ is a Schinzel ring'', as shown by Theorem \ref{thm:schinzel1=>Schinzelk} below, which already appeared in \cite[Lemma 2.3]{KK25} for $s=1$ and ${\mathcal Z}={\mathbb Z}$, and here is proved in \S \ref{ssec:pf-schinzel1=>Schinzelk}.
Also note that for ${\mathcal Z}={\mathbb Z}$, Definition \ref{def:schinzel} is unchanged if one drops the height condition in the definition of ${\mathcal S}_{\mathcal{Z}}(\underline P,A)$. This is not the case however for ${\mathcal Z} = \kappa[Y_1,\ldots,Y_n]$.



\begin{thm} \label{thm:schinzel1=>Schinzelk}
Let $\mathcal{Z}$ be a Krull domain such that $\mathcal Q$ is equipped with several sets of primes $S_1,\dots,S_n$ satisfying the product formula. Assume that $\mathcal{Z}$ satisfies the special $k=1$ parameter case of Definition \ref{def:schinzel}. Then $\mathcal{Z}$ is a Schinzel ring in the full sense, \hbox{i.e.} for several parameters. 
\end{thm}

\vskip 1mm

In this context of Schinzel rings, we have the following results:
\vskip 0,5mm

\noindent
(a) (stated as Theorem \ref{thm:schinzel-main}). \emph{If ${\mathcal Z}$ is an integrally Hilbertian ring, then the polynomial ring ${\mathcal Z}[\underline Y]$ in $n\geq 1$ variables is a Schinzel ring \hbox{w.r.t.} the partial degrees Weil heights $\deg_{Y_i}(\cdot)$, $i=1,\ldots,n$}.
\vskip 1mm

\noindent
(b) (stated as Corollary \ref{cor:Z[Y_0,...,Y_n]}). \emph{If ${\mathcal Z}$ is an arbitrary domain, the polynomial ring ${\mathcal Z}[Y_0,Y_1,\ldots,Y_n]$ in $n+1 \geq 2$ variables is a Schinzel ring \hbox{w.r.t.} the partial degree Weil heights $\deg_{Y_i}(\cdot)$, $i=0,1,\ldots,n$.} 
\vskip 1mm

\noindent
Result (a) will be established as a weak form of Theorem \ref{thm:main3}, which is stated in \S \ref{thm:main3} and proved in \S \ref{ssec:Schinzel-H}. Result (b) is proved in \S \ref{ssec:pf-corollary1.9}.

\subsection{Effective polynomial Schinzel Hypothesis} \label{SchH-general}

 Theorem \ref{thm:main3} goes beyond the Schinzel ring property stated in Theorem \ref{thm:schinzel-main}. Its statement and proof are expressed in pure polynomial terms. 

 
\subsubsection{Statement of Theorem \ref{thm:main3}}
Given an $n$-tuple $\underline d=(d_1,\ldots,d_n)$ of nonnegative integers, denote the number of monic monomials in $\underline Y$ of degree $\leq d_j$ in $Y_j$ ($j=1,\ldots,n$) by $\ell(\underline d)$:
\vskip 1mm

\centerline{$\ell(\underline d) = \prod_{j=1}^n (d_j+1)$,}
\vskip 1mm

\noindent
and denote the set of polynomials $M \in {\mathcal Z}[\underline Y]$ such that $\deg_{Y_j}(M) \leq d_j$ 
($j= 1,\ldots,n$) by ${\mathcal P}ol_{{\mathcal Z},n,\underline{d}}$. We view it as the $\ell(\underline d)$-dimensional affine space over ${\mathcal Z}$: the coordinates correspond to the coefficients.

\begin{thm} \label{thm:main3}
Assume that ${\mathcal Z}$ is an integrally Hilbertian ring.
Let $P_1,\ldots,P_s \in {\mathcal Z}[\underline{T},\underline{Y}]$ be $s\geq 1$ polynomials, irreducible in $\mathcal{Q}[\underline{T},\underline{Y}]$, and such that the product $P_1\cdots P_s$ has no nonunit divisor in ${\mathcal Z}$.
For each $i=1,\ldots,k$, let ${\underline d}_i=(d_{i1},\ldots,d_{in})$ be an $n$-tuple of nonnegative integers. Assume that \vskip 1mm

\noindent
{\rm (1)} $\deg_{\underline Y}(P_l) \geq 1$ for $l=1,\ldots,s$ or ${\underline d}_i \not= (0,\ldots,0)$ for $i=1,\ldots,k$. 
\vskip 2mm 

\noindent
Consider the subset
\vskip 1mm

\centerline{${\mathcal S}_{{\mathcal Z}[\underline Y]}(\underline P,{\underline d}_1,\ldots,{\underline d}_n) \subset {\mathcal P}ol_{{\mathcal Z},n,\underline{d}_1} \times \cdots \times {\mathcal P}ol_{{\mathcal Z},n,\underline{d}_k}$} 
\vskip 2mm

\noindent
consisting of all $k$-tuples $(M_1(\underline Y), \ldots, M_k(\underline Y))$ such that   \vskip 1,5mm

\noindent
{\rm (2)} \hskip 10mm $P_i\hskip 2pt (M_1(\underline Y), \ldots, M_k(\underline Y),Y_1, \ldots, Y_n)$
is irreducible in ${\mathcal Z}[\underline Y]$, $i=1,\ldots, s$.

\vskip 2mm

\noindent
Then we have the following three conclusions:
\vskip 1mm

\noindent
{\rm (a)} Assume further that
\vskip 0,5mm

\noindent
{\rm (3.a)} \hskip 25mm $\ell({\underline d}_i)> \sum_{j=1}^s \deg_{T_i}(P_j)$, \hskip 3mm $i=1,\ldots,k$.
\vskip 0,5mm

\noindent
Then the subset ${\mathcal S}_{{\mathcal Z}[\underline Y]}(\underline P,{\underline d}_1,\ldots,{\underline d}_n)$ is Zariski-dense in ${\mathcal P}ol_{{\mathcal Z},n,\underline{d}_1} \times \cdots \times {\mathcal P}ol_{{\mathcal Z},n,\underline{d}_k}$.


\vskip 2mm

\noindent
{\rm (b)} If ${\mathcal Z}$ is a {\it near UFD}, then the same holds with {\rm (3.a)}
replaced by 
\vskip 0,5mm

\noindent
{\rm (3.b)} \hskip 20mm $2^{\ell({\underline d}_i)} > \sum_{j=1}^s \deg_{T_i}(P_j)$, \hskip 3mm $i=1,\ldots,k$.
\vskip 2mm

\noindent
{\rm (c)} If $k=1$ and $\underline {d}_1 = \underline d = (d_1,\ldots,d_n)$, 
the same holds with {\rm (3.a)}
replaced by 
\vskip 1mm

\noindent
{\rm (3.c)} \hskip 20mm $\sum_{j=1}^n d_j > \sum_{i=1}^s\deg_{\underline{Y}}(P_i)$.
\vskip 0,7mm
\noindent
and, if additionally ${\mathcal Z}$ is a {near UFD}, by 
\vskip 1mm

\noindent 
{\rm (3.c')}$ \hskip 20mm\sum_{j=1}^n d_j  > \max_{1 \leq i \leq s}\deg_{\underline{Y}}(P_i)$.

\end{thm}

\subsubsection{Remarks}
(a) The original Schinzel Hypothesis situation is excluded by assumption (1). Indeed, in that situation,  $P_1,\ldots,P_s$ are polynomials in $T$ and $\underline d=(0,\ldots,0)$, thus (1) fails.
\vskip 1,5mm

\noindent
(b) \emph{The bounds in Theorem \ref{thm:main3}(3.b) are sharp}. Take $k=n=1$ for simplicity. Given any integer $d\geq 0$, we construct below an irreducible polynomial $P\in \Z[T,Y]$ such that $2^{\ell(d)} = \deg_T(P)$ and $P(M(Y),Y)$ is reducible in $\Z[Y]$ for all polynomials $M\in \Z[Y]$ of degree $d$. 

Denote the subset of all polynomials in $\Z[Y]$ of degree $\leq d$ and with  coefficients $0$ or $1$ by ${\mathcal M}_d$. We have ${\rm card}({\mathcal M}_d)= 2^{d+1}$. Consider the following polynomial:
\vskip 1mm

\centerline{$\displaystyle P_0(T,Y) = \prod_{p(Y)\in {\mathcal M}_d} (T-p(Y))$.}
\vskip 1mm

\noindent
Then take for $P$ the polynomial  
\vskip 1mm

\centerline{$P(T,Y)= P_0(T,Y) + 2m$,}
\vskip 1mm

\noindent
where $m\in \Z$ is chosen such that $P$ is irreducible in $\Z[T,Y]$. To see that such an integer $m$ exists, apply Theorem \ref{thm:main3}(3.b) with $k=1$, $n=2$, $s=1$, with $P_1$ taken to be $Q(U,T,Y)= P_0(T,Y) +2 U$, with the parameter taken to be $U$ and ${\underline Y}=(T,Y)$, and with $\underline d=(0,0)$. The polynomial $Q$ is irreducible in $\Z[U,T,Y]$; condition (1) holds since $\deg_{T,Y}(Q) \geq 1$; and $\ell(0,0) = (0+1)(0+1)=1$ so $2^{\ell(0,0)}=2 > \deg_U(Q)=1$. Therefore one can indeed take $m\in \Z[T,Y]$ such that $\deg_T(m) = \deg_Y(m)=0$ and $Q(m,T,Y)$ irreducible in $\Z [T,Y]$. 

We have $\deg_T(P) =2^{d +1} = 2^{\ell(d)}$ and, by construction, for every polynomial $M(Y)\in \Z[Y]$ of degree $d$, we have $M(Y) \equiv p(Y) \pmod{2}$ for some $p(Y)\in {\mathcal M}_d$ and so the polynomial $P(M(Y),Y)$ is divisible by $2$. 
%
\vskip 1,5mm

\noindent
(c) 
The bounds in Theorem \ref{thm:main3}(3.c) only depend on the degrees in $\underline Y$ of the polynomials $P_1,\ldots,P_s$. They can be interesting in certain situations, for example when $P_1,\ldots,P_s$ only depend on $\underline T$. The second bound (3.c') improves on the one given in \cite[Theorem 1.1]{BDN20b}.

\subsubsection{Principle of the proof}
The full proof of Theorem \ref{thm:main3} is given in \S \ref{ssec:Schinzel-H}. The aimed explicitness of the statement may somehow hide the leading principle.
Assume $k=1$ for simplicity.
In each of the polynomials $P_1,\ldots,P_s$ of Theorem \ref{thm:main3}, replace the parameter $T$ by the generic polynomial $M_{{\underline d}}(\underline Y)$ of degree $\leq d_{j}$ in $Y_j$ ($j=1,\ldots,n$):
\vskip 1mm

\centerline{$\displaystyle M_{{\underline d}}(\underline Y) = \sum_{\underline r=(r_1,\ldots,r_n)} \lambda_{\underline r} \hskip 1mm Y_1^{r_1} \cdots Y_n^{r_n}$;}
\vskip 1mm

\noindent
where the sum ranges over all $n$-tuples $\underline r=(r_1,\ldots,r_n)$ with $r_j\leq d_{j}$ ($j=1,\ldots,n$). The word ``generic'' means that the coefficients $\lambda_{\underline j}$ are indeterminates. 
The resulting polynomials
\vskip 1mm

\centerline{$F_i = P_i\hskip 2pt (M_{{\underline d}}(\underline Y),\underline{Y})\hskip 3mm (i=1,\ldots,s)$}
\vskip 1mm

\noindent
are polynomials in ${\mathcal Z}[\underline \Lambda, \underline Y]$, where $\underline \Lambda$ is the set consisting of all the indeterminates $\lambda_{\underline j}$. We will show that, if the integers $d_1,\ldots,d_n$ are suitably large,
the polynomials $F_1,\ldots, F_s$ are irreducible in ${\mathcal Q}[\underline \Lambda,\underline Y]$, of degree $\geq 1$ in $\underline Y$ (Lemma \ref{lemmaHS1}), and that their product $\prod_{i=1}^s F_i$ has no fixed divisor \hbox{w.r.t.} $\underline{\Lambda}$ (Lemma \ref{lemmaHS2}). We will then be able to use the integrally Hilbertian property to specialize the indeterminates from $\underline{\Lambda}$ and conclude the proof.




\subsection{Local condition preservation} \label{ssec:hypschistronggen}
As indicated in Section \ref{sec:intro}, our central Schinzel-type result -- Theorem \ref{thm:schinzel-main} -- has a second part showing that, under appropriate assumptions, absence of fixed divisors can also be preserved by specialization. Theorem \ref{thm:hypschistronggen} is a more precise version.

\begin{thm} \label{thm:hypschistronggen}
Assume that $\mathcal{Z}$ is an integrally Hilbertian ring that is a near UFD or a Krull domain. Let $P_1(\underline T,\underline{Y}),\dots,P_s(\underline T,\underline{Y})\in \mathcal{Z}[\underline T,\underline Y]$ be $s$ polynomials, irreducible in $\mathcal{Q}[\underline T,\underline{Y}]$, and such that $P_1\cdots P_s$ has no fixed divisor \hbox{w.r.t.} the $(k+n)$-tuple $(\underline T,\underline{Y})$ in ${\rm Nonunit}\hskip 1pt \mathcal{Z}$.
\vskip 0,5mm    
    
\noindent
{\rm (a)} Let $A$ be a positive real number. Then the subset ${\mathcal S}^\sharp_{\mathcal{Z}[\underline Y]}(\underline P,A)\subset {\mathcal Z}[\underline Y]^k$ of  all $k$-tuples $\underline{M}(\underline{Y})=(M_1(\underline{Y}),\dots,M_k(\underline{Y}))$ such that 
\vskip 1mm

{\rm (i)} $P_i(\underline M(\underline Y),\underline Y)$ is irreducible in ${\mathcal Z}[\underline Y]$, $i=1,\ldots,s$,
\vskip 0,5mm

{\rm (ii)} the product $P_1(\underline{M}(\underline{Y}),\underline{Y})\cdots P_s(\underline{M}(\underline{Y}),\underline{Y})$ has no fixed divisor \hbox{w.r.t.} $\underline{Y}$ in ${\rm Nonunit}\hskip 1pt \mathcal{Z}$,
\vskip 0,5mm

{\rm (iii)} $\deg_{Y_j}(M_i) > A$, $j=1,\dots,n$, $i=1,\ldots,s$,
\vskip 0,5mm

\noindent
is Zariski-dense.
\vskip 1mm

\noindent
{\rm (b)} For $k=n=1$,  {\rm (a)} holds with $M(Y)$ further required to be monic and  {\rm (iii)} replaced by
\vskip 0,5mm

{\rm (iii)'} $\deg_{Y}(M_i) = d_i$, $i=1,\ldots,s$, where $d_1,\ldots,d_s$ are arbitrary prescribed integers $d_i\geq 1$.
\end{thm}

\section{Proofs of Schinzel-type results} \label{sec:proofs}

\S \ref{ssec:Schinzel-H}, \S \ref{ssec:proof-thm5-1}, \S \ref{ssec:pf-corollary1.9}, \S \ref{ssec:pf-schinzel1=>Schinzelk} respectively provide the proofs of Theorem \ref{thm:main3}, Theorem \ref{thm:hypschistronggen}, Corollary \ref{cor:Z[Y_0,...,Y_n]}, Theorem \ref{thm:schinzel1=>Schinzelk}.

\subsection{Proof of Theorem \ref{thm:main3}} \label{ssec:Schinzel-H}

\subsubsection{Setup of the proof} 
Fix $s$ polynomials $P_1,\ldots, P_s \in {\mathcal Z}[\underline T,\underline Y]$.
For each index $i=1,\ldots,k$, let ${\underline{Q_i}}= (Q_{i0}, Q_{i1},\ldots,Q_{i\ell_i}$), with $Q_{i0}=1$,
be a $(\ell_i+1)$-tuple of nonzero polynomials in ${\mathcal Z}[\underline{Y}]$, distinct up to multiplicative constants 
in $\mathcal{Q}^\times$ and let $\underline{\lambda_i}=(\lambda_{i0}, \lambda_{i1},\ldots,\lambda_{i\ell_i})$ 
($\ell_i \geq 0$) be a corresponding tuple of indeterminates. We consider here a more general than necessary to prove Theorem \ref{thm:main3} for which one can take ${\underline{Q_i}}$ consisting of \emph{all monic monomials} of degree $\leq d_{ij}$ in $Y_j$ ($j=1,\ldots,n$), and then $\ell_i+1 =\ell(\underline d_i)$, $i=1,\ldots,k$ (see Remark \ref{rmk:momomials-vs-general}).  

Assume that 
all the indeterminates $\lambda_{i,l}$ , $i=1,\ldots,k$, $l=0,\ldots,\ell_i$, are algebraically independent and denote by $\underline{\Lambda}$ the tuple obtained by concatenating all tuples $\underline{\lambda_1}, \ldots, \underline{\lambda_k}$. Consider then the polynomials:

\vskip 1mm

\centerline{$\displaystyle M_{\underline {Q_i}}({\underline{\lambda_i}}, \underline Y) = \sum_{l=0}^{\ell_i} \lambda_{i,l} \hskip 1mm Q_{i,l}\in {\mathcal Z}[\underline \Lambda,\underline Y], \hskip 3mm i=1,\ldots,k$.}
\vskip 1mm

\noindent
Finally replace in each of the original polynomials $P_1,\ldots,P_s$ each parameter $T_i$ by $M_{\underline {Q_i}}({\underline{\lambda_i}}, \underline Y)$ ($i=1,\ldots,k$). 
The resulting polynomials
\vskip 1,5mm

\centerline{$F_i = P_i\hskip 2pt (M_{\underline {Q_1}}({\underline{\lambda_1}}, \underline Y), \ldots, M_{\underline {Q_k}}({\underline{\lambda_k}}, \underline Y),Y_1, \ldots, Y_n),\hskip 3mm i=1,\ldots,s$}
\vskip 1,5mm

\noindent
are polynomials in ${\mathcal Z}[\underline{\Lambda}, \underline Y]$. 

\subsubsection{Irreducibility of $F_1,\ldots,F_s$} The goal is Lemma \ref{lemmaHS1}. We start with Lemma \ref{lem:spec}. Fix $i\in \{1,\dots, s\}$ and, for simplicity, write $\underline{\lambda}$ for $\underline{\lambda}_i$, $\ell$ for $\ell_i$ and $\underline{Q}$ for $\underline{Q}_i$. Let $\underline U$ be a tuple of new variables and let $A=\mathcal{Q}[\underline{U}]$ (with $\underline U$ possibly empty in which case $A=\mathcal{Q}$).

\begin{lem}\label{lem:spec}
Let
$$P(T,\underline{Y})=P_{\rho}(\underline{Y}) T^{\rho} +\cdots+ P_{1}(\underline{Y}) T + P_{0}(\underline{Y})\,\ (P_j(\underline{Y}) \in A[\underline{Y}], j=1,\dots,\rho)
$$ be an irreducible polynomial in $A[T,\underline{Y}]$ of degree $\rho$. Assume that either $\deg_{\underline{Y}}(P) \geq 1$ or ($\rho \geq 1$ and $\ell \geq 1$). Then $G(\underline{\lambda},\underline{Y})=P(M_{\underline{Q}}(\underline{\lambda},\underline{Y}),\underline{Y})$ is irreducible in $A[\underline{\lambda},\underline{Y}]$ and of degree $\geq 1$ in $\underline{Y}$.
\end{lem}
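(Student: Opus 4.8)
The plan is to view $G(\underline{\lambda},\underline{Y}) = P(M_{\underline Q}(\underline\lambda,\underline Y),\underline Y)$ as a polynomial obtained from the irreducible $P(T,\underline Y)$ by the substitution $T \mapsto M_{\underline Q}(\underline\lambda,\underline Y) = \sum_{l=0}^{\ell}\lambda_l Q_l$, and to exploit the fact that this substitution is, in a precise sense, a linear change of variables (plus a specialization) that can be inverted. First I would dispose of the degenerate situations: if $\ell = 0$, then $M_{\underline Q} = \lambda_0 Q_0 = \lambda_0$ (since $Q_{i0}=1$), so $G = P(\lambda_0,\underline Y)$ is just $P$ with $T$ renamed $\lambda_0$; irreducibility is immediate, and the hypothesis forces $\deg_{\underline Y}(P)\geq 1$, giving $\deg_{\underline Y}(G)\geq 1$. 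So assume $\ell \geq 1$.

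The key step is the change of variables. Work in the polynomial ring $A[\underline\lambda,\underline Y] = A[\lambda_0,\lambda_1,\dots,\lambda_\ell,\underline Y]$. Introduce a new indeterminate $S$ and set $\lambda_0 = S - \sum_{l=1}^{\ell}\lambda_l Q_l$; this is an $A[\lambda_1,\dots,\lambda_\ell,\underline Y]$-algebra automorphism of $A[\lambda_0,\dots,\lambda_\ell,\underline Y]$ sending $A[\lambda_0,\lambda_1,\dots,\lambda_\ell,\underline Y]$ isomorphically onto $A[S,\lambda_1,\dots,\lambda_\ell,\underline Y]$ (it is a shear: $\lambda_0$ is replaced by $\lambda_0$ plus an element not involving $\lambda_0$). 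Under this isomorphism $M_{\underline Q}(\underline\lambda,\underline Y) = \lambda_0 + \sum_{l=1}^\ell \lambda_l Q_l \mapsto S$, so $G(\underline\lambda,\underline Y)$ maps to $P(S,\underline Y)$, viewed as an element of $A[S,\lambda_1,\dots,\lambda_\ell,\underline Y] = \big(A[\lambda_1,\dots,\lambda_\ell]\big)[S,\underline Y]$. Since $P(T,\underline Y)$ is irreducible in $A[T,\underline Y]$ and $A[\lambda_1,\dots,\lambda_\ell]$ is a polynomial ring over $A$ (hence $P$ stays irreducible after this faithfully flat base extension — one can invoke the standard fact that irreducibility of a polynomial is preserved under adjunction of indeterminates, e.g. via Gauss's lemma over the UFD $A[\lambda_1,\dots,\lambda_\ell]$, or Bertini–Noether-type stability), $P(S,\underline Y)$ is irreducible in $A[S,\lambda_1,\dots,\lambda_\ell,\underline Y]$. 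Pulling back through the isomorphism, $G(\underline\lambda,\underline Y)$ is irreducible in $A[\underline\lambda,\underline Y] = A[\underline\Lambda_i\text{-part},\underline Y]$, which is the assertion.

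It remains to check $\deg_{\underline Y}(G) \geq 1$. If $\deg_{\underline Y}(P) \geq 1$, write $P = \sum_j P_j(\underline Y)T^j$ with some $P_{j_0}$ of positive $\underline Y$-degree; then in $G$ the monomial $\lambda_0^{j_0}$ has coefficient $P_{j_0}(\underline Y)$ (the terms with $l\geq 1$ contribute extra $Q_l$ factors to other $\lambda$-monomials, so there is no cancellation), hence $\deg_{\underline Y}(G)\geq \deg_{\underline Y}(P_{j_0}) \geq 1$. If instead $\deg_{\underline Y}(P) = 0$ but $\rho\geq 1$ and $\ell\geq 1$, then $P(T,\underline Y) = P(T)$ has a genuine $T^\rho$ term with $\rho\geq 1$, and substituting $M_{\underline Q}$ produces the term $P_\rho\,(\lambda_\ell Q_\ell)^\rho$ (plus lower contributions) whose $\underline Y$-degree is $\rho\deg_{\underline Y}(Q_\ell)$; if $Q_\ell$ is nonconstant we are done, and if all the $Q_l$ were constant then, being distinct up to constants in $\mathcal Q^\times$, there could be at most one of them — contradicting $\ell\geq 1$ together with $Q_0=1$ — so some $Q_l$ with $l\geq 1$ is nonconstant and, after relabeling, the same argument applies.

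\emph{Main obstacle.} The only subtle point is the claim that irreducibility of $P(T,\underline Y)$ in $A[T,\underline Y]$ persists in $A[\lambda_1,\dots,\lambda_\ell][S,\underline Y]$, i.e. after base change along $A \hookrightarrow A[\lambda_1,\dots,\lambda_\ell]$. This is standard but deserves a line: since $A = \mathcal Q[\underline U]$ is a UFD (even a polynomial ring over a field), $A[\lambda_1,\dots,\lambda_\ell]$ is again a UFD, and a polynomial irreducible over the fraction field of $A$ remains irreducible over the larger polynomial ring by Gauss's lemma provided it is primitive; here $P$, being irreducible in $A[T,\underline Y]$ with $A$ a UFD, is either primitive with respect to $A$ (in which case we are fine) or is a prime element of $A$ itself — but the latter is excluded because $\deg_T P = \rho \geq 0$ and, when $\rho = 0$, the hypothesis forces $\deg_{\underline Y}(P)\geq 1$, so $P$ genuinely involves $T$ or $\underline Y$ and is not a constant. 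So the only real work is this bookkeeping of the degenerate cases, which I would handle up front to keep the main shear argument clean.
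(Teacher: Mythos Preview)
Your proposal is correct and follows essentially the same route as the paper: both arguments use the shear automorphism of $A[\underline\lambda,\underline Y]$ sending $\lambda_0 \mapsto \lambda_0 + \sum_{l\geq 1}\lambda_l Q_l$ (you phrase it via an auxiliary variable $S$, the paper writes it as an automorphism $\varphi$ directly), together with the fact that irreducibility of $P(\lambda_0,\underline Y)$ in $A[\lambda_0,\underline Y]$ persists after adjoining the extra indeterminates $\lambda_1,\dots,\lambda_\ell$. The paper's degree check is marginally cleaner---it looks at the leading coefficient in $\lambda_1$, which is $P_\rho(\underline Y)Q_1(\underline Y)^\rho$, and uses directly that $Q_1$ is nonconstant (forced by $Q_0=1$ and the ``distinct up to $\mathcal Q^\times$'' hypothesis)---whereas you argue via $Q_\ell$ with a relabeling fallback, but the substance is the same.
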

\begin{proof}
The irreducibility conclusion follows similarly to \cite[Lemma 2.1]{BDN20b}. Consider the ring automorphism $\varphi:A[\underline{\lambda},\underline{Y}] \rightarrow A[\underline{\lambda},\underline{Y}]$ that is the identity on $A[\lambda_1,\ldots,\lambda_\ell,\underline{Y}]$ and maps $\lambda_0$ to 
the polynomial $\lambda_0 +  \sum_{i=1}^{\ell} \lambda_i Q_i (\underline{X})$. Since $P(\lambda_{0},\underline{Y})$ is irreducible in $A[\lambda_{0},\underline{Y}]$, it is also irreducible in 
$A[\underline{\lambda},\underline{Y}]$. Hence $G(\underline{\lambda},\underline{Y})=\varphi(P(\lambda_0,\underline{Y}))$ is irreducible in $A[\underline \lambda, \underline{Y}]$. 
Regarding the degree conclusion, assume first $\ell\geq 1$. As a polynomial in $\lambda_1$, the leading coefficient of $G$ is $P_{\rho}(\underline{Y}) Q_1(\underline{Y})^{\rho}$. If $\rho\geq 1$, it is of positive degree in $\underline{Y}$ since $Q_1(\underline Y)$ is. If $\rho=0$, then $\deg_{\underline{Y}}(P) \geq 1$, so $G(\underline{\lambda},\underline{Y})= P_{0}(\underline Y)=P(T,\underline{Y})$ is of positive degree in $\underline Y$. 
Finally consider the case $\ell =0$. We then have $G(\underline{\lambda},\underline{Y}) = P(\lambda_0,\underline{Y})$. The result follows since, in this case, we also have $\deg_{\underline Y}(P) \geq 1$.
\end{proof}

Fix now $l\in \{1,\ldots,s\}$ and denote $P_l$ and $F_l$ by $P$ and $F$ in the following lemma.

\begin{lem} 
\label{lemmaHS1} Assume that $P$ is irreducible in $\mathcal{Q}[\underline{T},\underline Y]$ and satisfies
\vskip 1mm

\noindent
{\rm (1)} $\deg_{\underline Y}(P) \geq 1$ or $\ell_i \geq 1$ for $i=1,\ldots,k$. 
\vskip 1,5mm

\noindent
Then the polynomial $F$ is irreducible in ${\mathcal Q}[\underline{\Lambda},\underline Y]$ and of degree $\geq 1$ in $\underline Y$.
\end{lem}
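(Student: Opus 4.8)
The plan is to specialize the parameters $T_1,\dots,T_k$ one at a time and to apply Lemma~\ref{lem:spec} at each step. Put $P^{(0)}=P$ and, for $1\le i\le k$, let
$$P^{(i)}=P\bigl(M_{\underline{Q_1}}(\underline{\lambda_1},\underline Y),\dots,M_{\underline{Q_i}}(\underline{\lambda_i},\underline Y),T_{i+1},\dots,T_k,\underline Y\bigr),$$
which lies in $R_i:={\mathcal Q}[\underline{\lambda_1},\dots,\underline{\lambda_i},T_{i+1},\dots,T_k,\underline Y]$ and satisfies $P^{(k)}=F$. Since the polynomials $M_{\underline{Q_m}}(\underline{\lambda_m},\underline Y)$ involve no parameters, one has $\deg_{T_i}(P^{(i-1)})=\deg_{T_i}(P)$ for every $i$; in particular, if $\deg_{T_i}(P)=0$ the $i$-th step is vacuous, $P^{(i)}=P^{(i-1)}$.

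I would prove by induction on $i$ that $P^{(i)}$ is irreducible in $R_i$, carrying along whatever is known about $\deg_{\underline Y}(P^{(i)})$ in order to feed Lemma~\ref{lem:spec} at the next step. For the passage from $i-1$ to $i$, view $P^{(i-1)}$ as an element of $A[T_i,\underline Y]$ with $A={\mathcal Q}[\underline{\lambda_1},\dots,\underline{\lambda_{i-1}},T_{i+1},\dots,T_k]$ and apply Lemma~\ref{lem:spec} with the data $(\underline{Q_i},\underline{\lambda_i},\ell_i)$: this requires only that $P^{(i-1)}$ be irreducible in $A[T_i,\underline Y]$ (the inductive hypothesis) and that it satisfy the degree hypothesis of Lemma~\ref{lem:spec}, namely $\deg_{\underline Y}(P^{(i-1)})\ge1$ or ($\deg_{T_i}(P)\ge1$ and $\ell_i\ge1$); the conclusion is that $P^{(i)}=P^{(i-1)}(M_{\underline{Q_i}}(\underline{\lambda_i},\underline Y),\underline Y)$ is irreducible in $A[\underline{\lambda_i},\underline Y]=R_i$ and has $\deg_{\underline Y}(P^{(i)})\ge1$. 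When $\deg_{T_i}(P)=0$ the step is trivial; when $\deg_{T_i}(P)\ge1$ but $\ell_i=0$, the $i$-th substitution is just the relabelling $T_i\mapsto\lambda_{i,0}$, an isomorphism of polynomial rings, which preserves irreducibility and the $\underline Y$-degree.

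It remains to check the degree hypothesis of Lemma~\ref{lem:spec} at each step, and this is where assumption (*) enters. If $\deg_{\underline Y}(P)\ge1$, a short sub-induction (using the relabelling remark when $\ell_i=0$, Lemma~\ref{lem:spec} when $\ell_i\ge1$ and $\deg_{T_i}(P)\ge1$, and vacuousness otherwise) shows $\deg_{\underline Y}(P^{(i)})\ge1$ for every $i$, so the first disjunct is always available. If instead $\deg_{\underline Y}(P)=0$, then (*) forces $\ell_i\ge1$ for all $i$; moreover $P$ is irreducible with $\deg_{\underline Y}(P)=0$, hence non-constant in $\underline T$, so $\deg_{T_j}(P)\ge1$ for some $j$, and, letting $j_0$ be the least such index, the steps $i<j_0$ are vacuous ($P^{(j_0-1)}=P$), the $j_0$-th step satisfies the second disjunct ($\deg_{T_{j_0}}(P)\ge1$, $\ell_{j_0}\ge1$) and produces $\deg_{\underline Y}(P^{(j_0)})\ge1$, after which the first disjunct is available as before. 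Taking $i=k$ gives that $F=P^{(k)}$ is irreducible in ${\mathcal Q}[\underline\Lambda,\underline Y]$ and of degree $\ge1$ in $\underline Y$.

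The only genuinely non-formal point is the control of the $\underline Y$-degree: one must ensure it becomes, and then stays, $\ge1$ so that Lemma~\ref{lem:spec} keeps applying, which is exactly what (*) — together with the non-constancy of an irreducible $P$ — guarantees; everything else is bookkeeping about which variables belong to $A$ at each step. I note in passing that irreducibility of $F$ alone has a quicker proof: $F=\psi\bigl(P(\lambda_{1,0},\dots,\lambda_{k,0},\underline Y)\bigr)$, where $\psi$ is the automorphism of ${\mathcal Q}[\underline\Lambda,\underline Y]$ fixing every generator except $\lambda_{i,0}\mapsto\lambda_{i,0}+\sum_{l=1}^{\ell_i}\lambda_{i,l}Q_{i,l}$ ($i=1,\dots,k$), and $P(\lambda_{1,0},\dots,\lambda_{k,0},\underline Y)$ is irreducible in ${\mathcal Q}[\underline\Lambda,\underline Y]$, being a relabelling of the irreducible $P$ followed by adjunction of the remaining indeterminates $\lambda_{i,l}$; so applying $\psi$ keeps it irreducible. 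Only the $\underline Y$-degree assertion really needs the step-by-step analysis above.
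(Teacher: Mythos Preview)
Your proof is correct and follows essentially the same approach as the paper's: an inductive application of Lemma~\ref{lem:spec} over the parameters $T_1,\dots,T_k$, splitting into the two cases of~(*) and, in the second case, locating the first index $j_0$ with $\deg_{T_{j_0}}(P)\ge1$ to bootstrap the $\underline Y$-degree. Your closing remark that irreducibility alone follows from a single global automorphism is a nice addition not spelled out in the paper's proof of this lemma (though it is exactly the idea behind Lemma~\ref{lem:spec} itself).
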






\begin{proof} Set ${\underline T}^\prime = (T_2,\ldots,T_k)$ and view  $P$ as in ${\mathcal Z}[{\underline T}^\prime][T_1,{\underline Y}]$ (with ${\underline T}^\prime$ the empty tuple if $k=1$). We distinguish two cases.
\vskip 1mm

\noindent
\noindent
{\it 1st case:} $\deg_{\underline Y}(P) \geq 1$.
Apply Lemma \ref{lem:spec} to the polynomial $P$, $T=T_1$ , $\underline Q=\underline{Q_1}$ and $A$ taken to be ${\mathcal Q}[{\underline T}^\prime]$ (i.e., $\underline{U}=\underline{T}'$).
The resulting polynomial, say $F_1$, is a polynomial in ${\mathcal Z}[{\underline T}^\prime][{\underline{\lambda_1}}, {\underline Y}]$, which is irreducible in ${\mathcal Q}[{\underline T}^\prime][ {\underline{\lambda_1}},{\underline Y}]$ and such that $\deg_{{\underline Y}}(F_1) \geq 1$. The same lemma can in turn be applied to this polynomial $F_1$, viewed in ${\mathcal Q}[{\underline{\lambda_1}},{\underline T}^{\prime\prime}][T_2,{\underline Y}]$, where ${\underline T}^{\prime\prime}=(T_3,\ldots,T_k)$, and with $T=T_2$ and $\underline Q=\underline{Q_2}$. Proceeding inductively with each of the parameters $T_1,\ldots,T_k$ leads to the conclusion that $F$ is irreducible in ${\mathcal Q}[\underline{\Lambda},\underline Y]$ and of degree $\geq 1$ in $\underline Y$.

\vskip 1mm
\noindent
{\it 2nd case:} $\deg_{\underline Y}(P)=0$. Due to assumption (1), we then have $\ell_i \geq 1$ for $i=1,\ldots,k$. The polynomial $P$ is in ${\mathcal Q}[\underline T]$ and is irreducible in ${\mathcal Q}[\underline T]$. View it in ${\mathcal Q}[{\underline T}^\prime][T_1]$. If $\deg_{T_1}(P) = 0$, there is no specialization of $T_1$ to be performed, or, in other words, any specialization of $T_1$ to  $M_{\underline {Q_1}}({\underline{\lambda_1}},\underline Y)$ leaves the polynomial $P$ unchanged. As $P$ is not a constant polynomial, there is an index $i_0\in \{1,\ldots,s\}$ such that $\deg_{T_{i_0}}(P)\geq 1$, and we may choose $i_0$ to be minimal. After specializations of $T_1,\ldots,T_{i_0-1}$ that have left $P$ unchanged, we wish to specialize $T_{i_0}$. Lemma \ref{lem:spec} can be applied to the polynomial $P$, viewed in ${\mathcal Q}[T_{i_0+1},\ldots,T_k] [T_{i_0},\underline Y]$, with $T=T_{i_0}$ , $\underline Q=\underline{Q_{i_0}}$ and $\ell=\ell_{i_0}\geq 1$, with as a result a polynomial, say $F_{i_0} \in {\mathcal Z}[T_{i_0+1},\ldots,T_k] [\underline{\lambda_{i_0}},\underline Y]$, irreducible in ${\mathcal Q}[T_{i_0+1},\ldots,T_k] [\underline{\lambda_{i_0}},\underline Y]$ and of degree $\geq 1$ in $\underline Y$. Then, for the next parameter $T_{i_0+1}$, the argument of the 1st case can be applied to $F_{i_0}$, 
viewed in ${\mathcal Q}[T_{i_0+2},\ldots,T_k,\lambda_{i_0}] [{T_{i_0+1}},\underline Y]$, and then again the same argument can be applied for the remaining parameters to finally lead to the desired conclusion that $F$ is irreducible in ${\mathcal Q}[\underline{\Lambda},\underline Y]$ and of degree $\geq 1$ in $\underline Y$.
\end{proof}

\subsubsection{Partial conclusion}
\label{ssec:general-conclusion} Assume that all our polynomials $P_1,\ldots,P_s$ satisfy condition (1) from Lemma \ref{lemmaHS1}, \hbox{i.e.}, we have:

\vskip 1mm

\noindent
{\rm (1/{\rm all})} $\deg_{\underline Y}(P_l) \geq 1$ for $l=1,\ldots,s$ or $\ell_i \geq 1$ for $i=1,\ldots,k$. 
\vskip 1,5mm

\noindent
Then, from Lemma \ref{lemmaHS1},
the polynomials $F_1,\ldots,F_s$ satisfy conditions (Irred$/{\mathcal Q}(\underline \Lambda)$) and (Prim$/{\mathcal Q}[\underline \Lambda]$) from Section \ref{ssec:int-Hilb-ring}. By definition of ``integrally Hilbertian'', one  
obtains this conclusion.

\begin{prop} \label{prop:generalSchH}
Assume that ${\mathcal Z}$ is integrally Hilbertian.
Let $P_1,\ldots,P_s\in {\mathcal Z}[\underline T, \underline Y]$, irreducible in ${\mathcal Q}[\underline T, \underline Y]$ and satisfying condition {\rm (1/{\rm all})}.
If in addition, the polynomials $F_1,\ldots,F_s$ satisfy condition $\hbox{\rm (NoFixDiv}/{\mathcal Z}[\underline \Lambda])$ \emph{from Section \ref{ssec:int-Hilb-ring}}, then the set of $k$-tuples of polynomials
\vskip 1,5mm

\centerline{$M_{\underline {Q_i}}({\underline{\theta_i}}, \underline Y) = \sum_{l=0}^{\ell_i} \theta_{i,l} \hskip 1mm Q_{i,l}\in {\mathcal Z}[\underline Y], \hskip 3mm (i=1,\ldots,k)$}
\vskip 1mm

\centerline{(corresponding to some $(\ell_i+1)$-tuples $(\theta_{i,0}, \ldots, \theta_{i,\ell_i}) \in {\mathcal Z}^{\ell_i+1}$)}

\vskip 1,5mm

\noindent
such that
\vskip 1,5mm

\noindent
{each polynomial $P_i\hskip 2pt (M_{\underline {Q_1}}({\underline{\theta_1}}, \underline Y), \ldots, M_{\underline {Q_k}}({\underline{\theta_k}}, \underline Y),\underline Y)$ is irreducible in ${\mathcal Z}[\underline Y]$, $i=1,\ldots,s$,}
\vskip 2mm

\noindent
is Zariski-dense in ${\mathcal P}ol_{{\mathcal Z},n,\underline{d}_1} \times \cdots \times {\mathcal P}ol_{{\mathcal Z},n,\underline{d}_k}$.







\end{prop}

\begin{rmk} \label{rmk:momomials-vs-general}
In \S \ref{sssec:no-fixed-divisor}, we show that condition $\hbox{\rm (NoFixDiv}/{\mathcal Z}[\underline \Lambda])$ holds in the situation of Theorem \ref{thm:main3} for which the lists $\underline{Q_1},\ldots, \underline{Q_k}$ consist of all monomials of bounded degrees, and so can conclude the proof.
It may be interesting to consider other situations corresponding to other lists of monomials; one needs then to check condition $\hbox{\rm (NoFixDiv}/{\mathcal Z}[\underline \Lambda])$ for these lists. An extreme example of this is when $\underline{Q_i}$ consists of the single constant monomial $Q_{i0}=1$ ($i=1,\ldots,k$). Condition  $\hbox{\rm (NoFixDiv}/{\mathcal Z}[\underline \Lambda])$ is then assumption $\hbox{\rm (NoFixDiv}/{\mathcal Z}[\underline T])$ from \S \ref{ssec:int-Hilb-ring} and Proposition \ref{prop:generalSchH} is Definition \ref{def:integrally-hilbertian}.
\end{rmk}

\subsubsection{The No Fixed Divisor condition $\hbox{\rm (NoFixDiv}/{\mathcal Z}[\underline \Lambda])$}
\label{sssec:no-fixed-divisor}
The situation is that of Theorem \ref{thm:main3}. For each $i=1,\ldots,k$, an $n$-tuple ${\underline d}_i=(d_{i1},\ldots,d_{in})$ of nonnegative integers is given and $\underline{Q_i}$ is the tuple of all monic monomials of degree at most some given integer $d_{ij}$ in $Y_j$ (in some order with $Q_{i0}=1$);
we then have $\ell_i+1 = \ell(\underline d_i)=  \prod_{j=1}^n (d_{ij}+1)$. In particular, $\ell_i \geq 1$ exactly corresponds to $\underline d_i\not= (0,\ldots,0)$, so condition {\rm ((1/{\rm all}))} from \S \ref{ssec:general-conclusion} corresponds to condition (1) from Theorem \ref{thm:main3}.

Set $\Pi = P_1 \cdots P_s$ and use the abbreviation 
$\underline M_{\underline Q}( \underline{\Lambda},\underline Y)$ for the $k$-tuple 
\vskip 2mm
\centerline{$\underline M_{\underline Q}( \underline{\Lambda},\underline Y)=(M_{\underline {Q_1}}({\underline{\lambda_1}}, \underline Y), \ldots, M_{\underline {Q_k}}({\underline{\lambda_k}}, \underline Y))$.}
\vskip 2mm

\noindent
Then we have $F_1\cdots F_s= \Pi\hskip 1pt (\underline M_{\underline Q}(\underline{\Lambda},\underline Y),\underline Y)$. Similarly, given some ring $A$ and some ring morphism ${\mathcal Z}\rightarrow A$, for every $\underline{\Theta}=(\underline{\theta_1},\ldots,\underline{\theta_k)}\in \prod_{i=1}^k A^{\ell_i+1}$, we denote by $\underline M_{\underline Q}( \underline{\Theta},\underline Y)$ the $k$-tuple of polynomials in $A[\underline Y]$ obtained from $\underline M_{\underline Q}( \underline{\Lambda},\underline Y)$ by specializing the indeterminates in $\underline{\Lambda}$ to $\underline{\Theta}$.

\begin{lem} \label{lemmaHS2}  
Assume that $P_1,\ldots,P_s\in {\mathcal Z}[\underline T, \underline Y]$ are irreducible in ${\mathcal Q}[\underline T, \underline Y]$, the product $P_1\cdots P_s$ has no nonunit divisor in ${\mathcal Z}$ and condition {\rm (1)} from Theorem \ref{thm:main3} holds. Then, under the various assumptions 
of statements {\rm (a)},{\rm (b)},{\rm (c)} of \hbox{\rm Theorem \ref{thm:main3}}, the polynomials $F_1,\ldots,F_s$ satisfy condition $\hbox{\rm (NoFixDiv}/{\mathcal Z}[\underline \Lambda])$, \hbox{i.e.}, the product $F_1\cdots F_s$  has no fixed divisor \hbox{w.r.t.} $\underline{\Lambda}$ in ${\rm Nonunit}\hskip 1pt {\mathcal Z}$. 
\end{lem}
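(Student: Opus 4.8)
Write $\Pi = P_1\cdots P_s$, so that $F_1\cdots F_s=\Pi(\underline M_{\underline Q}(\underline\Lambda,\underline Y),\underline Y)$ in the notation of \S\ref{ssec:general-conclusion}. Since the primality type is ${\rm Nonunit}$, what has to be shown is: for every nonzero nonunit $a\in{\mathcal Z}$ there is a specialization $\underline\Theta\in\prod_{i=1}^k{\mathcal Z}^{\ell_i+1}$ of $\underline\Lambda$ with $\Pi\bigl(\underline M_{\underline Q}(\underline\Theta,\underline Y),\underline Y\bigr)\not\equiv 0\pmod a$. Note at the outset that putting $\lambda_{i,l}=0$ for all $l\ge 1$ turns $M_{\underline{Q_i}}$ into $\lambda_{i,0}$, so the generic $F_1\cdots F_s$ specializes back to $\Pi$ (with $T_i$ renamed $\lambda_{i,0}$); in particular, since $\Pi$ has no nonunit divisor in ${\mathcal Z}$, the reduction $\overline\Pi$ of $\Pi$ modulo any nonzero $a$ is nonzero, and any would-be fixed divisor of $F_1\cdots F_s$ w.r.t.\ $\underline\Lambda$ is already a fixed divisor of $\Pi$ w.r.t.\ $\underline T$. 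Combined with Lemma~\ref{lem:zero} and the bound $\deg_{\lambda_{i,l}}(F_1\cdots F_s)\le\deg_{T_i}(\Pi)=\sum_{j}\deg_{T_i}(P_j)=:\rho_i$, this confines the prime ideals that can arise among $\mathrm{Div}_{\mathcal Z}(\Pi)$ and the sets $\Gamma_{\rho_i}$ of primes with small residue field; in the Dedekind / near~UFD setting these are the relevant ``local'' obstructions and they form a manageable family.

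\textbf{Core step: a grid non-vanishing argument.} Suppose we are working over a residue field $\kappa={\mathcal Z}/\mathfrak p$ for which $\overline\Pi\neq0$ (the reduction to such a $\mathfrak p$, or its substitute, is the last step). Regard $\overline\Pi$ as a nonzero element of $D[\underline T]$ where $D=\kappa[\underline Y]$ is a domain, with $\deg_{T_i}\overline\Pi\le\rho_i$. The polynomials $\overline{M_{\underline{Q_i}}}(\underline\theta_i,\underline Y)=\sum_{l}\theta_{i,l}Q_{i,l}(\underline Y)$ obtained by letting each $\theta_{i,l}$ run through $\{0,1\}$ are pairwise distinct in $D$ — because the $Q_{i,l}$ are distinct monomials, hence $\kappa$-linearly independent — and there are $2^{\ell(\underline d_i)}$ of them. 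Under the hypotheses of case (a) ($\ell(\underline d_i)>\rho_i$, \emph{a fortiori} $2^{\ell(\underline d_i)}>\rho_i$) or of case (b) ($2^{\ell(\underline d_i)}>\rho_i$), we thus have, for each $i$, a set $S_i\subset D$ of $>\rho_i$ distinct substitution values. The elementary fact that a nonzero polynomial of degree $\le\rho_i$ over a domain cannot vanish on a set of more than $\rho_i$ points, iterated over $i=1,\dots,k$, then yields $\underline\theta\in\prod_iS_i$ with $\overline\Pi\bigl(\overline{\underline M}(\underline\theta,\underline Y),\underline Y\bigr)\neq0$ in $D$, i.e.\ $\Pi\bigl(\underline M_{\underline Q}(\underline\theta,\underline Y),\underline Y\bigr)\not\equiv0\pmod{\mathfrak p}$. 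In case (c) ($k=1$), where only $\sum_j d_{1j}>\deg_{\underline Y}(\Pi)$ (resp.\ $>\max_i\deg_{\underline Y}(P_i)$ for near UFDs) is assumed, I would instead use a single monomial of maximal total degree, substituting $T_1\mapsto\lambda_{1,0}+\lambda_{1,\star}\,Y_1^{d_{11}}\cdots Y_n^{d_{1n}}$, and separate terms by total $\underline Y$-degree: the top homogeneous part of $\Pi(M,\underline Y)$ is computed from the leading $T_1$-coefficient of $\Pi$ alone and so cannot cancel; for near UFDs this is run factor by factor on $P_1,\dots,P_s$, which is exactly what allows $\max_i$ to replace $\sum_i$.

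\textbf{Reduction to a good residue field, and the main obstacle.} If ${\mathcal Z}$ is a near UFD (cases (b) and the second bound of (c)), a putative nonunit $a$ has a prime element divisor $\pi$; since $\Pi$ has no nonunit divisor, $\pi\nmid\Pi$, hence $\overline\Pi\neq0$ over $\kappa={\mathcal Z}/\langle\pi\rangle$ and the core step applies, so $\langle a\rangle$ (and even $\langle\pi\rangle$) cannot be a fixed divisor. For a general integrally Hilbertian ${\mathcal Z}$ (case (a)), no such clean reduction to a residue field is available: here one argues directly in $R={\mathcal Z}/a{\mathcal Z}$, passing to $R/\mathfrak q$ for a minimal prime $\mathfrak q$ that avoids a chosen coefficient of $\overline\Pi$ when that coefficient is non-nilpotent, and otherwise peeling off the top nilpotent layer to land once more on a non-vanishing problem over a residue ring to which the core step applies. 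This residual sub-case — where one must control the interaction of the generic substitution with a \emph{non-principal} prime (or with a nilpotent element) dividing $\Pi$, so that the hypothesis ``$\Pi$ has no nonunit divisor'' can still be brought to bear — is the main obstacle of the lemma, and it is precisely there that the degree hypotheses are calibrated to leave exactly enough room. Since the argument produces, for every nonzero nonunit $a$, a specialization of $\underline\Lambda$ into ${\mathcal Z}$ keeping $\Pi(\underline M_{\underline Q}(\cdot,\underline Y),\underline Y)$ nonzero modulo $a$, the product $F_1\cdots F_s$ has no fixed divisor w.r.t.\ $\underline\Lambda$ in ${\rm Nonunit}\,{\mathcal Z}$, which is condition $\hbox{\rm (NoFixDiv}/{\mathcal Z}[\underline\Lambda])$.
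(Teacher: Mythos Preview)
Your treatment of cases (b) and (c) is essentially the paper's: reduce to a prime divisor when ${\mathcal Z}$ is a near UFD, and for (c) substitute the single monomial $Y_1^{d_1}\cdots Y_n^{d_n}$ and separate the pieces $Z_i(\underline Y)\,Y^{i\underline d}$ by total $\underline Y$-degree. (The paper makes the separation explicit for \emph{all} $i<j$, not just the top one, which immediately gives $Z_i\equiv 0\pmod p$ for every $i$; your ``top homogeneous part'' phrasing would need an easy downward induction, but the idea is the same.)

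The genuine gap is in case~(a). Your core step is run only over a domain $D=\kappa[\underline Y]$, and you then try to manufacture such a $\kappa$ by passing from $R={\mathcal Z}/a{\mathcal Z}$ to $R/\mathfrak q$ for a minimal prime $\mathfrak q$ missing some coefficient of $\overline\Pi$. But it can happen that \emph{every} coefficient of $\overline\Pi$ is nilpotent in $R$, so that no such $\mathfrak q$ exists. Concretely, take ${\mathcal Z}=\Z[\sqrt{-5}]$ and let the coefficients of $\Pi$ be $2$ and $1+\sqrt{-5}$; these have no common nonunit divisor in ${\mathcal Z}$ (their gcd ideal is the nonprincipal prime $(2,1+\sqrt{-5})$), yet modulo $a=2$ both lie in the nilradical of $R\cong\mathbb F_2[x]/(x^2)$. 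Your ``peeling off the top nilpotent layer'' is not a defined procedure and does not rescue the argument.

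The paper avoids this reduction entirely by running the grid argument \emph{directly over} $R={\mathcal Z}/a{\mathcal Z}$, using as substitution values for $T_i$ the $\ell(\underline d_i)$ monomials $Q_{i0},\dots,Q_{i\ell_i}$ themselves. The point is that for any commutative ring $R$, each difference $Q_{il}-Q_{il'}$ is a \emph{non-zero-divisor} in $R[\underline Y]$: if $P\neq 0$ has lex-largest monomial $Q$ with coefficient $c\neq 0$, then $Q_{il}P$ and $Q_{il'}P$ have distinct lex-largest monomials $Q_{il}Q$ and $Q_{il'}Q$. Hence the Vandermonde determinant in these $\ell_i+1$ values is a non-zero-divisor, and the usual root-counting argument shows $\overline\Pi$ cannot vanish at all of them; this is exactly where the bound $\ell(\underline d_i)>\rho_i$ of case~(a) (rather than $2^{\ell(\underline d_i)}>\rho_i$) is used. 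Ironically, your own $\{0,1\}$-combinations also have pairwise differences that are non-zero-divisors in $R[\underline Y]$ (same leading-monomial argument, since the leading coefficient is $\pm 1$), so your core step would have worked over $R$ without any reduction to a domain --- you just needed to notice that ``domain'' was never required.
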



\begin{proof} Let $p$ be a nonunit of ${\mathcal Z}$, $p\not=0$. By assumption, the polynomial $\Pi(\underline T,\underline Y)$ is nonzero in $({\mathcal Z}/p{\mathcal Z})[\underline T,\underline Y]$. We start with case (b) of Theorem \ref{thm:main3}.
\vskip 1mm

\noindent
\emph{Case {\rm (b)}: ${\mathcal Z}$ is a near UFD and {\rm (3.b)} holds: $2^{\ell({\underline d}_i)} > \sum_{j=1}^s \deg_{T_i}(P_j)$} ($i=1,\ldots,k$). As ${\mathcal Z}$ is a near UFD, one may assume that $p$ is a prime. For each $i=1,\ldots,k$, the subset
\vskip 2mm

\centerline{$\{M_{\underline {Q_i}}({\overline{\underline{\theta_i}}}, \underline Y) \hskip 2pt | \hskip 2pt \overline{\underline{\theta_i}} \in ({\mathcal Z}/p{\mathcal Z})^{\ell_i+1}\} \subset ({\mathcal Z}/p{\mathcal Z})[\underline Y]$,}
\vskip 2mm

\noindent
is of cardinality $|{\mathcal Z}/p{\mathcal Z}|^{\ell_i+1} \geq 2^{\ell_i+1}$. 
Assumpiion (3.b) rewrites as $2^{\ell_i+1}> \deg_{T_i}(\Pi)$, $i=1,\ldots,k$. Since ${\mathcal Z}/p{\mathcal Z}$ is a domain, it follows that there exists a $k$-tuple 
\vskip 1mm

\centerline{$\underline M_{\underline Q}( \overline{\underline{\Theta}},\underline Y)=(M_{\underline {Q_1}}({\overline{\underline{\theta_1}}}, \underline Y), \ldots, M_{\underline {Q_k}}({\overline{\underline{\theta_k}}}, \underline Y)) \in (({\mathcal Z}/p{\mathcal Z})[\underline Y])^k$}
\vskip 1mm

\noindent
such that
$\Pi\hskip 1pt (\underline M_{\underline Q}(\overline{\underline{\Theta}},\underline Y),\underline Y)$ is nonzero in $({\mathcal Z}/p{\mathcal Z})[\underline Y]$.
 Lifting the $\overline{\underline{\theta_i}}$ to some elements of ${\mathcal Z}$ provides a point ${\underline{\Theta}} = (\underline {\theta_1},\ldots,\underline{\theta_k})\in \prod_{i=1}^k {\mathcal Z}^{\ell_i+1}$ such that 
 \vskip 1mm
 
 \centerline{$\underline M_{\underline Q}( \underline{\Theta},\underline Y)=(M_{\underline {Q_1}}({\underline{\theta_1}}, \underline Y), \ldots, M_{\underline {Q_k}}(\underline{\theta_k}, \underline Y)) \in ({\mathcal Z}[\underline Y])^k$}
 
 \vskip 1,5mm

 \noindent
  has the property that $\Pi\hskip 1pt (\underline M_{\underline Q}(\underline{\Theta},\underline Y),\underline Y)$ is not divisible by $p$.  
Conclude that $\underline{\Theta}$ is a special value of $\underline{\Lambda}$ that makes the polynomial $\Pi\hskip 1pt (\underline M_{\underline Q}(\underline{\Lambda},\underline Y),\underline Y)$ nonzero modulo $p$, \hbox{i.e.} $p$ is not a fixed divisor of $F_1\cdots F_s$ \hbox{w.r.t.} $\underline{\Lambda}$.
\vskip 2mm

\noindent
\emph{Case {\rm (a)}: ${\mathcal Z}$ is  integrally Hilbertian and \hbox{\rm (3.a)} holds: $\ell({\underline d}_i) > \sum_{j=1}^s \deg_{T_i}(P_j)$ ($i=1,\ldots,k$)}. The ring ${\mathcal Z}/p{\mathcal Z}$ is no longer integral in general. We will however be able to adjust the argument by working with a smaller subset of tuples of polynomials $\underline M_{\underline Q}(\underline{\Theta},\underline Y)$
than the one used in Case (b).

Specifically, for each $i=1,\ldots,k$, consider the following subset:
\vskip 1mm

\centerline{${\mathcal S}_i=\{ Q_{il} \hskip 2pt | \hskip 2pt  l=0,\ldots,\ell_i\} \subset {\mathcal Z}[\underline Y]$.}

\vskip 1mm

\noindent
We claim that the corresponding elements, regarded in $({\mathcal Z}/p{\mathcal Z})[\underline Y]$, have this property: any difference between two distinct such elements is not a zero divisor in $({\mathcal Z}/p{\mathcal Z})[\underline Y]$. Namely, if $P\in ({\mathcal Z}/p{\mathcal Z})[\underline Y]$ is nonzero and $Q$ is the largest monomial in $P$ for the lexicographical order, then the largest monomial in  the polynomial $Q_{il} P$ is $Q_{il} Q$. For two different $Q_{il}, Q_{i l^\prime} \in {\mathcal S}_i$, we have $Q_{il} Q\not= Q_{i l^\prime} Q$ and so $Q_{il} P\not= Q_{i l^\prime} P$, which proves the claim. Deduce next that the product of all nonzero differences $(Q_{il}- Q_{i l^\prime})$ with $Q_{il}, Q_{i l^\prime} \in {\mathcal S}_i$ is not a zero divisor in $({\mathcal Z}/p{\mathcal Z})[\underline Y]$.

 It follows from this conclusion that a nonzero polynomial in one variable of degree $\leq \ell_i$ and with coefficients in $({\mathcal Z}/p{\mathcal Z})[\underline Y]$ cannot vanish at every one of the $\ell_i+1$ elements $Q_{il}$ with $ l=0,\ldots,\ell_i$. Indeed the determinant of the linear system corresponding to the $\ell_i+1$ vanishing conditions -- a Van Der Monde determinant -- is precisely the product of differences in $({\mathcal Z}/p{\mathcal Z})[\underline Y]$ considered above. By construction it is not a zero divisor in $({\mathcal Z}/p{\mathcal Z})[\underline Y]$, and so only the zero polynomial could be a  solution to the system, which is excluded.

Conclude: as $\ell_i=\ell(\underline d_i) - 1 \geq \deg_{T_i}(\Pi)$ for each $i=1,\ldots,k$, there exist $l_i\in \{0,\ldots,\ell_i\}$ ($i=1,\ldots,k$) such that the $k$-tuple $\underline Q_{\underline l}=(Q_{1l_1},\ldots,Q_{kl_k}) \in ({\mathcal Z}[\underline Y])^k$ satisfies the following.
\vskip 1mm

\centerline{$\Pi\hskip 1pt (\underline Q_{\underline l},\underline Y)$ is not divisible by $p$.}
\vskip 1mm

\noindent
Conclude as in case (b) that $p$ is not a fixed divisor of $F_1\cdots F_s$ \hbox{w.r.t.} $\underline{\Lambda}$.
\vskip 2mm

\noindent
\emph{Case {\rm (c)}: ${\mathcal Z}$ integrally Hilbertian, $k=1$ and \hbox{\rm (3.c)} holds
$\sum_{j=1}^n d_j > \sum_{i=1}^s \deg_{\underline Y}(P_i)$.}
Write $T$ for $T_1$, $\underline \lambda$ for $\underline{\lambda_1}$ and $\ell$ for $\ell(\underline d)-1$. We may assume that $r=\deg_T(\Pi) \geq 1$. Set
\vskip 1mm

\centerline{$\Pi= P_1 \dots P_s=Z_0(\underline{Y})+Z_1(\underline{Y})T+\dots+Z_r(\underline{Y})T^r$.}
\vskip 1mm

\noindent
Assume on the contrary that $F_1\cdots F_s$ has a fixed divisor \hbox{w.r.t.} $\underline{\lambda}$, \hbox{i.e.}, there is a nonunit $p\in \mathcal{Z}$, $p\not=0$, such that 
\vskip 1mm

\centerline{$(F_1\cdots F_s)(\underline{m},\underline{Y})\equiv 0 \pmod{p}$\hskip 2mm for all $\underline{m} \in \mathcal{Z}^{\ell+1}$.}
\vskip 1mm

\noindent
In particular, for $\underline m$ corresponding to the monomial $Y_1^{d_1}\dots Y_n^{d_n}$, we obtain:
\vskip 1,5mm

\centerline{$Z_0(\underline{Y})+Z_1(\underline{Y})Y_1^{d_1}\dots Y_n^{d_n}+Z_2(\underline{Y})Y_1^{2d_1}\dots Y_n^{2d_n}+\dots+Z_r(\underline{Y})Y_1^{r d_1}\dots Y_n^{r d_n}\equiv 0 \pmod{p}$.}
\vskip 1,5mm



\noindent
We claim that for every $0 \leq i<j \leq r$ such that $Z_i(\underline{Y})$ and $Z_j(\underline{Y})$ are nonzero, the nonzero monomials appearing in $Z_i(\underline{Y})Y_1^{id_1}\dots Y_n^{id_n}$ are different from the ones in $Z_j(\underline{Y})Y_1^{jd_1}\dots Y_n^{j d_n}$. Indeed for nonzero monomials $A$ and $B$ in $Z_i(\underline{Y})Y_1^{id_1}\dots Y_n^{id_n}$ and $Z_j(\underline{Y})Y_1^{jd_1}\dots Y_n^{j d_n}$ respectively, we have:
\begin{align*}
\deg_{\underline{Y}}(A) &\leq \deg_{\underline{Y}}(Z_i)+i\left(\sum_{i=1}^n d_i\right)\\
&\leq \deg_{\underline{Y}}(\Pi)+i\left(\sum_{i=1}^n d_i\right)\\
&= \sum_{i=1}^s\deg_{\underline{Y}}(P_i)+i\left(\sum_{i=1}^n d_i\right)\\
&<\left(\sum_{i=1}^n d_i\right)+i\left(\sum_{i=1}^n d_i\right) \quad \textrm{(by assumption)}\\
&\leq j\left(\sum_{i=1}^n d_i\right)\leq \deg_{\underline{Y}}(B).
\end{align*}
Thus $A \neq B$.
Hence, for each $i=1,\ldots, r$, we have $Z_i(\underline{Y})Y_1^{i\cdot d_1}\dots Y_n^{i \cdot d_n}\equiv 0 \pmod{p}$, and so $Z_i(\underline{Y}) \equiv 0 \pmod{p}$. Therefore $p$ divides $\Pi$, a contradiction.

\vskip 2mm

Finally, assume as in the special case of Theorem \ref{thm:main3}(c) that \emph{$\mathcal{Z}$ is a near UFD and that \hbox{\rm (3.c')} holds: $\sum_{j=1}^nd_j> \max_{1 \leq i \leq s}\deg_{\underline{Y}}(P_i)$.} If $p$ is a fixed divisor of $F_1\cdots F_s$ \hbox{w.r.t.} $\underline \lambda$, then $\Pi(Y_1^{d_1}\dots Y_n^{d_n},\underline{Y})\equiv 0 \pmod{p}$. Without loss of generality, one may assume that $p$ is prime. Since $(\mathcal{Z}/p\mathcal{Z})[\underline{Y}]$ is an integral domain, there exists $1 \leq j \leq s$ such that 
\vskip 1mm

\centerline{$P_{j}(Y_1^{d_1}\dots Y_n^{d_n},\underline{Y})\equiv 0 \pmod{p}$.}
\vskip 1mm

\noindent
In the second part of the argument above, with $P_j$ replacing $P_1\cdots P_s$, we conclude that $P_j \equiv 0 \pmod{p}$, a contradiction.
\end{proof}

\subsubsection{End of proof of Theorem \ref{thm:main3}}
\label{ssec:end-of-pf}
Lemmas \ref{lemmaHS1} and \ref{lemmaHS2} guarantee that Proposition \ref{prop:generalSchH} can be applied under the assumptions of Theorem \ref{thm:main3}. Its conclusion yields the requested conclusion of Theorem \ref{thm:main3}.



\subsubsection{Final comments} \hskip 1mm
\vskip 0,5mm

\noindent
(a) For statement (b) and second part of statement (c) of Theorem \ref{thm:main3}, for which ${\mathcal Z}$ is assumed to be a near UFD, only the condition (\ref{def:near-UFD}-2) -- every nonunit has at least one prime divisor -- is used in the proof. Thus this sole condition can more generally replace the near UFD assumption.
\vskip 1mm

\noindent
(b) The argument for Case (b) also works in the general case of an integrally Hilbertian ring 
\emph{provided that} the local condition is strengthened to assume that the product $P_1\cdots P_s$ has no divisor in ${\rm Spec}^\ast \hskip 1pt {\mathcal Z}$ (instead of just ${\rm Nonunit} \hskip 1pt {\mathcal Z}$). Hence the better bound from Case (b) also holds in the general Case (a) under this stronger local condition.

The argument should be adjusted as follows. Starting from a non unit $p\in {\mathcal Z}$, $p\not=0$, consider a maximal ideal ${\frak p}\subset {\mathcal Z}$ containing $p$. By the stronger local condition, the polynomial $\Pi(\underline T,\underline Y)$ is nonzero in $({\mathcal Z}/{\frak p})[\underline T,\underline Y]$. Then,
one can work as we did in Case (b) with ${\frak p}$ replacing $p{\mathcal Z}$ to construct a point ${\underline{\Theta}} = (\underline {\theta_1},\ldots,\underline{\theta_k})\in \prod_{i=1}^k {\mathcal Z}^{\ell_i+1}$ such that 
 \vskip 1mm
 
 \centerline{$\underline M_{\underline Q}( \underline{\Theta},\underline Y)=(M_{\underline {Q_1}}({\underline{\theta_1}}, \underline Y), \ldots, M_{\underline {Q_k}}(\underline{\theta_k}, \underline Y)) \in ({\mathcal Z}[\underline Y])^k$}
 
 \vskip 1,5mm

 \noindent
 $\Pi\hskip 1pt (\underline M_{\underline Q}(\underline{\Theta},\underline Y),\underline Y)$ is not divisible by ${\mathcal P}$, and so \emph{a fortiori} not by $p$.

\subsection{Proof of Theorem \ref{thm:hypschistronggen}} \label{ssec:proof-thm5-1}
\noindent
\vskip 1mm
\noindent
\emph{\textbf{General setting.}}
Since the primality type ${\rm Nonunit}\hskip 1pt \mathcal{Z}$ is equivalent to:
$$
\mathcal{P}=
\begin{cases}
{\rm Irred}\hskip 1pt \mathcal{Z} & \textrm{if\,\,} \mathcal{Z}\,\,\textrm{is a Krull domain},\\
{\rm Prime}\hskip 1pt \mathcal{Z} & \textrm{if\,\,} \mathcal{Z}\,\,\textrm{is a near UFD},
\end{cases}
$$
as before, we can carry out the proof using the primality $\mathcal{P}$. Recall from Proposition \ref{lem:schcases} that ${\mathcal Z}$ satisfies the properties (NVA) and (SF) \hbox{w.r.t.} ${\mathcal P}$ for the following choice of ${\mathcal E}$ in condition \eqref{eq:exismaide} of Definition \ref{def:schproide}:
$$
\mathcal{E}=
\begin{cases}
\s1(\mathcal{Z}) & \textrm{if\,\,} \mathcal{Z}\,\,\textrm{is a Krull domain},\\
{\rm Prime}\hskip 1pt \mathcal{Z} & \textrm{if\,\,} \mathcal{Z}\,\,\textrm{is a near UFD}.
\end{cases}
$$ 

\noindent
Without loss of generality, we may assume that $\deg_{\underline T}(P_1 \cdots P_s) \geq 1$. Write
$$
P_1 \cdots P_s=a_0\underline{T}^{\underline w_0}\underline{Y}^{\underline z_0}+ \dots+a_t\underline{T}^{\underline w_t}\underline{Y}^{\underline z_t},
$$ with $a_0 \neq 0$, and
$$
(\underline w_0, \underline z_0)> \dots > (\underline{w}_t,\underline{z}_t)
$$ in lexicographic order $T_1 \succ T_2 \succ \cdots \succ T_k \succ Y_1 \succ \cdots \succ Y_n$. Choose nonzero tuples $\underline{d}_1,\dots,\underline{d}_k \in \mathbb{N}^n$
such that, for every $c=1,\dots,t$,
\begin{equation}\label{eq:condZg}
\sum_{i=1}^n\left(\sum_{r=1}^k w_{c,r} d_{r,i}+z_{c,i}\right)< \sum_{i=1}^n\left(\sum_{r=1}^k w_{0,r} d_{r,i}+z_{0,i}\right).
\end{equation}
In case (b): $k=n=1$ of Theorem \ref{thm:hypschistronggen}, it is easy to see that $\underline{d}_1 \in \N$ can be any integer $\geq 1$. In the general case (a), we choose $\underline{d}_1,\dots, \underline{d}_k \in \N^n \cap ]A,+\infty[^n$ so that this condition is satisfied {(such a choice is easily seen to be possible).}

Set $\Delta:=\sum_{i=1}^n\left(\sum_{\tau=1}^k w_{0,r} d_{r,i}+z_{0,i}\right)$. Define
$$
\begin{cases}
\mathcal{S}=({\rm Supp}(a_0) \cap \mathcal{E}) \cup (\Gamma_\Delta \cap \mathcal{E})\\
\Omega=\Sigma(\mathcal{S})_{\mathcal{E}} \cap \mathcal{P}
\end{cases}
$$
By Lemma \ref{lem:interrestgen} and the support finiteness property, $\mathcal{S}$ is finite; hence $\Omega$ is finite. Since $P_1\cdots P_s$ has no fixed divisor \hbox{w.r.t.} $(\underline{T},\underline{Y})$ in $\mathcal{P}$, by the nonvanishing approximation property, there exists $(\underline{\theta},\underline{\alpha}) \in \mathcal{Z}^{k+n}$ such that 
\begin{equation}\label{eq:nonvaneee}
(P_1\cdots P_s)(\underline{\theta},\underline{\alpha}) \not \equiv 0 \pmod{\mathfrak{p}},
\end{equation} for all $\mathfrak{p} \in \Omega$. Let $\omega \in \mathcal{Z}$ be a generator of the principal ideal $\prod_{\mathfrak{p} \in \Omega} \mathfrak{p}$, which is contained in $\bigcap_{\mathfrak{p} \in \Omega} \mathfrak{p}$.
\begin{claim}\label{claim:invardiv}
\begin{enumerate}
\item If an ideal $\mathfrak{q} \in \mathcal{E}$ divides $a_0  \omega^\ell$,  then $\mathfrak{q} \in \mathcal{S}$.\label{finl1}
\item If an ideal $\mathfrak{p} \in \mathcal{P}$ divides $a_0 \omega^\ell$ for some $\ell \geq 0$, then $\mathfrak{p} \in \Omega$.\label{finl2}
\end{enumerate}
\end{claim}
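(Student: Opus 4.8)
The plan is to establish \eqref{finl1} first, by a direct primality argument, and then to obtain \eqref{finl2} as a formal consequence of \eqref{finl1}. Throughout I keep in mind three facts already in place: in each of the two cases under consideration ($\mathcal{Z}$ a Dedekind domain with $\mathcal{E}={\rm Spec}\,\mathcal{Z}$, or $\mathcal{Z}$ a near UFD with $\mathcal{E}={\rm Prime}\,\mathcal{Z}$) every ideal lying in $\mathcal{E}$ is a genuine prime ideal of $\mathcal{Z}$; the set $\Omega$ is finite (by Lemma \ref{lem:interrestgen} together with the support finiteness property); and, by the choice of $\omega$, one has the equality of ideals $\langle\omega\rangle=\prod_{\mathfrak{p}\in\Omega}\mathfrak{p}$, where membership ``$\mathfrak{q}$ divides $x$'' is read as $x\in\mathfrak{q}$.

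For \eqref{finl1}, let $\mathfrak{q}\in\mathcal{E}$ with $a_r\omega^\ell\in\mathfrak{q}$. Since $\mathfrak{q}$ is prime, either $a_r\in\mathfrak{q}$ or $\omega\in\mathfrak{q}$. If $a_r\in\mathfrak{q}$, then $\mathfrak{q}\in{\rm Supp}(a_r)\cap\mathcal{E}\subseteq\mathcal{S}$ and we are done. If instead $\omega\in\mathfrak{q}$, then $\mathfrak{q}\supseteq\langle\omega\rangle=\prod_{\mathfrak{p}\in\Omega}\mathfrak{p}$; since this is a finite product of ideals and $\mathfrak{q}$ is prime, $\mathfrak{q}$ contains some $\mathfrak{p}_0\in\Omega$, i.e.\ $\mathfrak{q}\in{\rm Supp}(\mathfrak{p}_0)$. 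But $\mathfrak{p}_0\in\Omega=\Sigma(\mathcal{S})_\mathcal{E}\cap\mathcal{P}$ gives ${\rm Supp}(\mathfrak{p}_0)\cap\mathcal{E}\subseteq\mathcal{S}$, and as $\mathfrak{q}\in\mathcal{E}$ we conclude $\mathfrak{q}\in\mathcal{S}$.

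For \eqref{finl2}, let $\mathfrak{p}\in\mathcal{P}$ divide $a_r\omega^\ell$ for some $\ell\geq0$, so $a_r\omega^\ell\in\mathfrak{p}$; since $\mathfrak{p}$ already lies in $\mathcal{P}$, it suffices to show $\mathfrak{p}\in\Sigma(\mathcal{S})_\mathcal{E}$, i.e.\ ${\rm Supp}(\mathfrak{p})\cap\mathcal{E}\subseteq\mathcal{S}$. So pick $\mathfrak{q}\in{\rm Supp}(\mathfrak{p})\cap\mathcal{E}$; then $\mathfrak{p}\subseteq\mathfrak{q}$, hence $a_r\omega^\ell\in\mathfrak{q}$, and \eqref{finl1} yields $\mathfrak{q}\in\mathcal{S}$. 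Thus ${\rm Supp}(\mathfrak{p})\cap\mathcal{E}\subseteq\mathcal{S}$, whence $\mathfrak{p}\in\Sigma(\mathcal{S})_\mathcal{E}\cap\mathcal{P}=\Omega$.

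The argument is essentially formal, so I do not expect a genuine obstacle. The only points that demand a moment's care are: checking that the members of $\mathcal{E}$ really are prime ideals in both the Dedekind and near-UFD settings, so that the ``either $a_r$ or $\omega$ lies in $\mathfrak{q}$'' step and the ``a prime containing a finite product of ideals contains one factor'' step apply uniformly; and using that $\Omega$ is finite so that $\prod_{\mathfrak{p}\in\Omega}\mathfrak{p}$ is well defined and this factorization argument is available. All the substantive work is in the finiteness of $\mathcal{S}$ and $\Omega$ and in the production of $\theta$ via the nonvanishing approximation property, which are carried out before the claim.
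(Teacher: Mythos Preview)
Your proof is correct and follows essentially the same approach as the paper. The paper condenses the argument into the single displayed containment $({\rm Supp}(a_r)\cup{\rm Supp}(\omega))\cap\mathcal{E}\subset\mathcal{S}$ (obtained from $\langle\omega\rangle=\prod_{\mathfrak{p}\in\Omega}\mathfrak{p}$ and ${\rm Supp}(\mathfrak{p})\cap\mathcal{E}\subset\mathcal{S}$ for each $\mathfrak{p}\in\Omega$) and then declares both assertions immediate, whereas you unpack the primality step and the passage from \eqref{finl1} to \eqref{finl2} explicitly; the logical content is identical.
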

\begin{proof}[Proof of Claim \ref{claim:invardiv}]
 
We have: 
$$
 ({\rm Supp}(a_0) \cup {\rm Supp}(\omega)) \cap \mathcal{E}
=({\rm Supp}(a_0) \cap \mathcal{E}) \bigcup \left(\bigcup_{\mathfrak{p} \in \Omega} {\rm Supp}(\mathfrak{p}) \cap \mathcal{E}\right)\subset \mathcal{S}.
$$ 

\noindent
Assertions (\ref{finl1}) and (\ref{finl2}) follow then immediately from the definitions; for (\ref{finl2}), note that 
$$
{\rm Supp}(\mathfrak{p}) \cap \mathcal{E} \subset ({\rm Supp}(a_0) \cup {\rm Supp}(\omega)) \cap \mathcal{E} \subset \mathcal{S}.
$$
\end{proof}

For $r=1,\dots,k$, consider all monomials $Q_{r,0},\dots,Q_{r,\ell_r}$ of degree $\leq d_{r,j}$ in $Y_j$ ($j=1,\dots,n$), where $Q_{r,0}=1$ and $Q_{r,\ell_r}=Y_1^{d_{r,1}}\cdots Y_n^{d_{r,n}}$. For each $r=1,\dots, k$, fix a tuple $\underline{\lambda}_r=(\lambda_{r,0},\dots,\lambda_{r,\ell_r-1})$ of indeterminates, and define
\begin{equation}\label{eq:genericRr}
R_r(\underline{\lambda}_r,\underline{Y})=Y_1^{d_{r,1}}Y_2^{d_{r,2}}\cdots Y_n^{d_{r,n}}+\sum_{i=0} ^{\ell_r-1} \lambda_{r,i} Q_{r,i}.
\end{equation}
be the corresponding generic polynomial. Set
$$
\underline{\lambda}=(\underline{\lambda}_1,\dots,\underline{\lambda}_k).
$$
For each $i=1,\dots,s$, define
$$
F_i(\underline{\lambda},\underline{Y})=P_i\left(\theta_1+ \omega \cdot R_1(\underline{\lambda}_1,\underline{Y}),\dots,\theta_k+ \omega \cdot R_k(\underline{\lambda}_k,\underline{Y}),\underline{Y}\right).
$$
\begin{claim}\label{claim:specterm}
The coefficient of $\prod_{i=1}^nY_i^{\sum_{r=1}^k w_{0,r} d_{r,i}+z_{0,i}}$ in $F_1 \cdots F_s$ equals $a_0 \omega^{\Gamma}$, where $\Gamma:=\sum_{i=1}^n w_{0,i}$.
\end{claim}
\begin{proof}
This easily follows from expanding
$F_1\cdots F_s
$ using \eqref{eq:condZg}.
\end{proof}
\vskip 1mm
\noindent
\emph{\textbf{Intermediate step.}}
We wish to show that the polynomials $F_1,\ldots,F_s$ satisfy the conditions $({\rm Irred}/\mathcal{Q}(\underline{\lambda}))$, $({\rm Prim}/\mathcal{Q}[\underline{\lambda}])$ and (${\rm NoFixDiv}/\mathcal{Z}[\underline{\lambda}]_{\mathcal{P}}$).
\begin{claim}\label{claim:irrfigencas}
For every $i=1,\dots,s$, the polynomial $F_i$ is irreducible in $\mathcal{Q}[\underline{\lambda},\underline{Y}]$.
\end{claim}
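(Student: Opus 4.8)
The plan is to reuse, for the particular generic substitution at hand, the automorphism argument already carried out in Lemma~\ref{lem:spec} (and, in the multi-parameter setting, in Lemma~\ref{lemmaHS1}): I will exhibit a $\mathcal{Q}$-algebra automorphism of $\mathcal{Q}[\underline{\lambda},\underline{Y}]$ that carries the one-variable polynomial $P_i(\lambda_0)$ onto $F_i$, and then conclude by invariance of irreducibility under ring automorphisms.

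First I would record the elementary but essential point that, since $\underline{d}=(d_1,\dots,d_n)$ is nonzero, we have $\ell=\prod_{j=1}^n(d_j+1)-1\geq 1$, so that $\lambda_0$ genuinely occurs among the indeterminates $\underline{\lambda}=(\lambda_0,\dots,\lambda_{\ell-1})$, and it appears in $R(\underline{\lambda},\underline{Y})$ with coefficient $Q_0=1$. Hence I may write $R(\underline{\lambda},\underline{Y})=\lambda_0+S(\underline{\lambda},\underline{Y})$, where $S=Y_1^{d_1}\cdots Y_n^{d_n}+\sum_{j=1}^{\ell-1}\lambda_j Q_j$ belongs to the subring $A:=\mathcal{Q}[\lambda_1,\dots,\lambda_{\ell-1},\underline{Y}]$ and does not involve $\lambda_0$, so that $\mathcal{Q}[\underline{\lambda},\underline{Y}]=A[\lambda_0]$.

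Next I would consider the endomorphism $\psi$ of $A[\lambda_0]$ that is the identity on $A$ and sends $\lambda_0\mapsto \theta+\omega(\lambda_0+S)=\omega\lambda_0+(\theta+\omega S)$. This is an $A$-linear substitution of $\lambda_0$ of the form $\lambda_0\mapsto \omega\lambda_0+c$ with $c=\theta+\omega S\in A$ and with invertible linear coefficient $\omega$; here I use that $\omega\neq 0$, which holds since $\omega$ generates the nonzero ideal $\prod_{\mathfrak{p}\in\Omega}\mathfrak{p}$ (each $\mathfrak{p}\in\Omega\subset\mathcal{P}$ being a nonzero principal ideal), or is a unit when $\Omega=\emptyset$, and a nonzero element of the field $\mathcal{Q}$ is a unit of $A$. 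Such a substitution is an automorphism of $A[\lambda_0]$, with inverse the identity on $A$ together with $\lambda_0\mapsto \omega^{-1}(\lambda_0-\theta)-S$.

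Finally, $P_i(\lambda_0)$ is irreducible in $\mathcal{Q}[\lambda_0]$ by hypothesis, hence irreducible in $\mathcal{Q}[\underline{\lambda},\underline{Y}]$, since irreducibility of an element of a polynomial ring over a field is preserved upon adjoining further indeterminates. As $\psi(\lambda_0)=\theta+\omega R(\underline{\lambda},\underline{Y})$ and $\psi$ fixes everything else, $\psi\bigl(P_i(\lambda_0)\bigr)=P_i\bigl(\theta+\omega R(\underline{\lambda},\underline{Y})\bigr)=F_i(\underline{\lambda},\underline{Y})$, so $F_i$ is the image of an irreducible element under a ring automorphism, hence irreducible in $\mathcal{Q}[\underline{\lambda},\underline{Y}]$. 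I do not expect a genuine obstacle here: the whole content is the construction of $\psi$, and the only points needing a word of justification are that $\lambda_0$ actually appears (guaranteed by $\underline{d}\neq 0$) and that $\omega\neq 0$ (guaranteed by the choice of $\omega$). If it is wanted for the downstream application, one also reads off from the same substitution that $\deg_{\underline{Y}}(F_i)\geq 1$: the top $\underline{Y}$-degree term of $F_i$ is $c_i\,\omega^{r_i}\bigl(Y_1^{d_1}\cdots Y_n^{d_n}\bigr)^{r_i}$, where $c_i$ is the leading coefficient of $P_i$ and $r_i=\deg_T(P_i)\geq 1$.
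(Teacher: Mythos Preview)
Your proof is correct and follows essentially the same approach as the paper's: both construct the $\mathcal{Q}$-algebra automorphism of $\mathcal{Q}[\underline{\lambda},\underline{Y}]$ fixing $\mathcal{Q}[\lambda_1,\dots,\lambda_{\ell-1},\underline{Y}]$ and sending $\lambda_0$ to $\theta+\omega\,R(\underline{\lambda},\underline{Y})$, then transport the irreducibility of $P_i(\lambda_0)$ through it. Your version is more careful in making explicit why $\lambda_0$ actually occurs (via $\underline{d}\neq 0$) and why the substitution is invertible (via $\omega\neq 0$), points the paper leaves implicit.
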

\begin{proof}[Proof of Claim \ref{claim:irrfigencas}]
The proof is the same as that of Lemma \ref{lemmaHS1}. {Fix $i\in \{1,\ldots,s\}$.} Consider the ring automorphism $\varphi:\mathcal{Q}[\underline{\lambda},\underline{Y}] \rightarrow \mathcal{Q}[\underline{\lambda},\underline{Y}]$ that is the identity on $\mathcal{Q}[\underline{\lambda} \setminus \{\lambda_{r,0}\mid r=1,\dots,k \},\underline{Y}]$ and, {for $r=1,\ldots,k$, maps $\lambda_{r,0}$,} to 
the polynomial $\theta_r+\omega \cdot R_r(\underline{\lambda}_r,\underline{Y})$. Since $P_i(\lambda_{1,0},\dots,\lambda_{k,0})$ is irreducible in $\mathcal{Q}[\lambda_{1,0},\dots,\lambda_{k,0},\underline{Y}]$, it remains irreducible in $\mathcal{Q}[\underline{\lambda},\underline{Y}]$. Therefore, 
$$F_i(\underline{\lambda},\underline{Y})=\varphi(P_i(\lambda_{1,0},\dots,\lambda_{k,0}))
$$ is also irreducible in $\mathcal{Q}[\underline \lambda, \underline{Y}]$. 
\end{proof}
\begin{claim}\label{claim:nofixprdivgenca}
The product $F_1\cdots F_s$ has no fixed divisor \hbox{w.r.t.} $\underline{\lambda}$ in $\mathcal{P}$.
\end{claim}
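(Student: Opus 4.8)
The plan is to argue by contradiction. Suppose $\mathfrak{p}\in\mathcal{P}$ is a fixed divisor of $F:=F_1\cdots F_s$ with respect to $\underline{\lambda}$; I will show this forces $\mathfrak{p}\in\Omega$, and then that $\mathfrak{p}\in\Omega$ is incompatible with $\mathfrak{p}$ being a fixed divisor. The four ingredients are: Lemma \ref{lem:zero}(2), which confines the support of any fixed ideal divisor of $F$; Claim \ref{claim:specterm}, which identifies $a_r\omega^r$ as a coefficient of $F$; Claim \ref{claim:invardiv}(1), which controls the divisors of $a_r\omega^r$ lying in $\mathcal{E}$; and the nonvanishing \eqref{eq:nonvaneee} of $(P_1\cdots P_s)(\theta)$ modulo the ideals of $\Omega$. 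This is the same pattern as the warm-up proof of Theorem \ref{lem-SH1}, now carried out with ideals and primality types.

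First I would check the routine preliminaries: $F$ is a nonzero element of $\mathcal{Z}[\underline{\lambda},\underline{Y}]$, since each $F_i=P_i(\theta+\omega R(\underline{\lambda},\underline{Y}))$ is nonzero ($P_i\neq 0$ and $\mathcal{Z}[\underline{\lambda},\underline{Y}]$ is a domain), so Lemma \ref{lem:zero} applies to it; and $\deg_{\lambda_j}(F)\leq r$ for every $j$, because $F=\sum_{i=0}^r a_i(\theta+\omega R)^i$ and $R$ is of degree $1$ in each $\lambda_j$. Since $\underline{d}\neq(0,\dots,0)$ we have $d_1+\cdots+d_n\geq 1$, hence $\deg_{\lambda_j}(F)\leq r\leq r(d_1+\cdots+d_n)=\Delta$ and $\Gamma_{\deg_{\lambda_j}(F)}\subset\Gamma_\Delta$. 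Now Lemma \ref{lem:zero}(2), applied to $F$ with the parameter tuple $\underline{\lambda}$, gives $\mathscr{F}_{\underline{\lambda}}(F)\subset\Sigma\big(({\rm Div}_{\mathcal{Z}}(F)\cap{\rm Spec}\,\mathcal{Z})\cup\Gamma_\Delta\big)$, so ${\rm Supp}(\mathfrak{p})\subset({\rm Div}_{\mathcal{Z}}(F)\cap{\rm Spec}\,\mathcal{Z})\cup\Gamma_\Delta$.

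Next I would intersect with $\mathcal{E}$ and show ${\rm Supp}(\mathfrak{p})\cap\mathcal{E}\subset\mathcal{S}$. The part $\Gamma_\Delta\cap\mathcal{E}$ lies in $\mathcal{S}$ by the definition of $\mathcal{S}$. For $\mathfrak{q}\in{\rm Div}_{\mathcal{Z}}(F)\cap\mathcal{E}$: $\mathfrak{q}$ divides every coefficient of $F$, in particular the coefficient of $Y_1^{rd_1}\cdots Y_n^{rd_n}$, which equals $a_r\omega^r$ by Claim \ref{claim:specterm}; so $\mathfrak{q}$ divides $a_r\omega^r$ and lies in $\mathcal{E}$, whence $\mathfrak{q}\in\mathcal{S}$ by Claim \ref{claim:invardiv}(1) with $\ell=r$. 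Therefore ${\rm Supp}(\mathfrak{p})\cap\mathcal{E}\subset\mathcal{S}$, i.e. $\mathfrak{p}\in\Sigma(\mathcal{S})_{\mathcal{E}}$; combined with $\mathfrak{p}\in\mathcal{P}$ this gives $\mathfrak{p}\in\Sigma(\mathcal{S})_{\mathcal{E}}\cap\mathcal{P}=\Omega$.

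Finally I would close the contradiction. Since $\mathfrak{p}\in\Omega$ and $\langle\omega\rangle=\prod_{\mathfrak{q}\in\Omega}\mathfrak{q}\subset\mathfrak{p}$, for every $\underline{m}\in\mathcal{Z}^\ell$ we have $\theta+\omega R(\underline{m},\underline{Y})\equiv\theta\pmod{\mathfrak{p}}$, hence $F(\underline{m},\underline{Y})\equiv(P_1\cdots P_s)(\theta)\pmod{\mathfrak{p}}$, which is $\not\equiv 0$ by \eqref{eq:nonvaneee}; so $\mathfrak{p}$ is not a fixed divisor of $F$ with respect to $\underline{\lambda}$ — a contradiction (and when $\Omega=\emptyset$ the conclusion $\mathfrak{p}\in\Omega$ is itself absurd). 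Hence $F_1\cdots F_s$ has no fixed divisor with respect to $\underline{\lambda}$ in $\mathcal{P}$. I expect the only delicate point to be the degree bookkeeping when invoking Lemma \ref{lem:zero}: one must verify that the $\Gamma$-set produced by that lemma, governed by $\deg_{\lambda_j}(F)$, is contained in $\Gamma_\Delta$, and that the "divides $F$'' branch really does land on the coefficient $a_r\omega^r$ around which $\mathcal{S}$ and $\Omega$ were built; everything else is direct unwinding of the definitions.
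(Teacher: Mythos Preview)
Your proof is correct, but it takes a longer route than the paper's. The paper's argument is a one-step shortcut: since $\mathfrak{p}$ is a fixed divisor, in particular $(F_1\cdots F_s)(\underline{0},\underline{Y})\equiv 0\pmod{\mathfrak{p}}$; by Claim~\ref{claim:specterm} the coefficient of $Y_1^{rd_1}\cdots Y_n^{rd_n}$ in this specialization is still $a_r\omega^r$, so $\mathfrak{p}$ divides $a_r\omega^r$, and Claim~\ref{claim:invardiv}(\ref{finl2}) directly gives $\mathfrak{p}\in\Omega$. You instead invoke Lemma~\ref{lem:zero}(\ref{lem:zeropart2}) to locate ${\rm Supp}(\mathfrak{p})$ inside $({\rm Div}_{\mathcal{Z}}(F)\cap{\rm Spec}\,\mathcal{Z})\cup\Gamma_\Delta$, then intersect with $\mathcal{E}$ and use Claim~\ref{claim:invardiv}(\ref{finl1}) prime-by-prime to land in $\mathcal{S}$, hence in $\Omega$. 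Both approaches close the contradiction identically via \eqref{eq:nonvaneee}. The paper's shortcut works here because a \emph{single} specialization $\underline{m}=\underline{0}$ already exposes the coefficient $a_r\omega^r$, bypassing any need for Lemma~\ref{lem:zero} or the degree bookkeeping you carry out; your heavier machinery is, however, exactly what is needed in the next claim (Claim~\ref{claim:nothavifd}), where no single evaluation suffices and one must genuinely analyze fixed divisors with respect to $\underline{Y}$ via Lemma~\ref{lem:zero}.
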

\begin{proof}[Proof of Claim \ref{claim:nofixprdivgenca}]
Assume on the contrary that $F_1 \cdots F_s$ has a fixed divisor $\mathfrak{p}\in \mathcal{P}$ \hbox{w.r.t.} $\underline{\lambda}$, that is, $$
(F_1\cdots F_s)(\underline{m},\underline{Y})\equiv 0 \pmod{\mathfrak{p}},
$$
for all $\underline{m} \in \mathcal{Z}^{\ell}$. In particular, $(F_1 \cdots F_s)(\underline{0},\underline{Y}) \equiv 0 \pmod{\mathfrak{p}}$. By Claim \ref{claim:specterm}, $\mathfrak{p}$ divides $a_0 \omega^\Gamma$. Therefore, by Claim \ref{claim:invardiv}, $\mathfrak{p} \in \Omega$ and $\mathfrak{p}$ divides $\omega$. This implies that $(P_1\dots P_s)(\underline{\theta},\underline{\alpha}) \equiv (F_1\dots F_s)(\underline{0},\underline{\alpha}) \equiv 0 \pmod{\mathfrak{p}}$, contradicting \eqref{eq:nonvaneee}. Consequently, $F=F_1 \cdots F_s$ has no fixed divisor \hbox{w.r.t.} $\underline{\lambda}$ in $\mathcal{P}$.
\end{proof}
\vskip 1mm
\noindent
\emph{\textbf{End of proof of Theorem \ref{thm:hypschistronggen}.}}
Since $\mathcal{Z}$ is integrally Hilbertian, for 
$$\underline{v}=(\underline{v}_1,\dots,\underline{v}_k) 
$$ in a Zariski-dense set $\mathcal{H} \subset  \mathcal{Z}^{\ell_1+\dots+\ell_k}$, the polynomials
$$
G_i(\underline{Y})=F_i(\underline{v},\underline{Y})=P_i(\theta_1+\omega \cdot R_1(\underline{v}_1,\underline{Y}),\dots, \theta_k+\omega \cdot R_k(\underline{v}_k,\underline{Y}),\underline{Y}) \in \mathcal{Z}[\underline{Y}]\quad i=1,\dots,s
$$ are irreducible in $\mathcal{Z}[\underline{Y}]$. For $\underline{v} \in \mathcal{H}$, Let 
$$
\underline{M}_{\underline{v}}(\underline{Y})=(M_{\underline v, 1}(\underline{Y}),\dots,M_{\underline v,k}(\underline{Y}))=(\theta_1+\omega \cdot R_1(\underline{v},\underline{Y}),\dots,\theta_k+\omega \cdot R_k(\underline{v},\underline{Y})) \in \mathcal{Z}[\underline{Y}]^k.
$$ To finish the proof of Theorem \ref{thm:hypschistronggen}, it remains to show the following.
\begin{claim}\label{claim:nothavifd}
The product $G_1\cdots G_s$ has no fixed divisor \hbox{w.r.t.} $\underline{Y}$ in $\mathcal{P}$.
\end{claim}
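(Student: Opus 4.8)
The plan is to re-run the argument of Claim~\ref{claim:nofixprdivgenca} essentially verbatim, inserting one extra reduction at the start to account for the fact that the polynomials $G_1,\dots,G_s$ no longer involve the indeterminates $\underline{\lambda}$, so that possessing a fixed divisor \hbox{w.r.t.} $\underline{Y}$ in $\mathcal{P}$ is \emph{a priori} weaker than being divisible, as a polynomial, by an ideal of $\mathcal{P}$. Write $G=G_1\cdots G_s=\Pi(M(\underline{Y}))$, where $\Pi=P_1\cdots P_s=a_0+a_1T+\dots+a_rT^r$ (with $a_r\neq 0$) and $M(\underline{Y})=\theta+\omega\cdot R(\underline{v},\underline{Y})$.

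First I would record two elementary facts. Since $\deg_{Y_j}(G)\le r\,d_j\le r(d_1+\dots+d_n)=\Delta$ for each $j$, putting $\Delta'=\max_j\deg_{Y_j}(G)$ gives $\Gamma_{\Delta'}\subset\Gamma_\Delta$. And by Claim~\ref{claim:specterm} --- whose value is an element of $\mathcal{Z}$, hence untouched by the specialization $\underline{\lambda}\mapsto\underline{v}$ --- the coefficient of $Y_1^{rd_1}\cdots Y_n^{rd_n}$ in $G$ is $a_r\omega^r$; therefore every ideal in ${\rm Div}_{\mathcal{Z}}(G)$ divides $a_r\omega^r$, and Claim~\ref{claim:invardiv}(\ref{finl1}) yields ${\rm Div}_{\mathcal{Z}}(G)\cap\mathcal{E}\subset\mathcal{S}$, while $\Gamma_{\Delta'}\cap\mathcal{E}\subset\Gamma_\Delta\cap\mathcal{E}\subset\mathcal{S}$ by definition of $\mathcal{S}$. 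Now assume for contradiction that some $\mathfrak{p}\in\mathcal{P}$ is a fixed divisor of $G$ \hbox{w.r.t.} $\underline{Y}$. Applying Lemma~\ref{lem:zero}(\ref{lem:zeropart2}) to $G$, with $\underline{Y}$ playing the role of the parameters and no remaining variables, then Remark~\ref{rmk:sigminter}, and finally the monotonicity of $\Sigma(\cdot)_{\mathcal{E}}$ together with the two inclusions just established, I would obtain --- exactly as in Claim~\ref{clm:existinfmanywe} ---
$$
\mathscr{F}_{\underline{Y}}(G)\cap\mathcal{P}\ \subset\ \mathcal{P}\cap\Sigma(\mathcal{S})_{\mathcal{E}}\ =\ \Omega .
$$
Hence $\mathfrak{p}\in\Omega$, so $\omega\in\prod_{\mathfrak{q}\in\Omega}\mathfrak{q}\subset\mathfrak{p}$. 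Consequently $M(\underline{y})=\theta+\omega\cdot R(\underline{v},\underline{y})\equiv\theta\pmod{\mathfrak{p}}$ for every $\underline{y}\in\mathcal{Z}^n$, whence $0\equiv G(\underline{y})=\Pi(M(\underline{y}))\equiv\Pi(\theta)=(P_1\cdots P_s)(\theta)\pmod{\mathfrak{p}}$, contradicting \eqref{eq:nonvaneee}. This proves Claim~\ref{claim:nothavifd}, and therewith Theorem~\ref{thm:hypschistrong}.

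The one genuinely new ingredient --- and the step I expect to take the most care --- is precisely this opening reduction: a fixed divisor of $G$ \hbox{w.r.t.} $\underline{Y}$ need not divide $G$ coefficientwise, so one cannot simply read off divisibility of the leading coefficient $a_r\omega^r$ as was done in Claim~\ref{claim:nofixprdivgenca}. Instead Lemma~\ref{lem:zero}(\ref{lem:zeropart2}) is needed to confine the support of $\mathscr{F}_{\underline{Y}}(G)$ within ${\rm Div}_{\mathcal{Z}}(G)$ together with the finite set $\Gamma_{\Delta'}$ of small-residue primes, after which the degree bound $\Delta'\le\Delta$ and the leading-coefficient identity of Claim~\ref{claim:specterm} are what guarantee that both contributions land in $\mathcal{S}$. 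Once $\mathfrak{p}\in\Omega$ has been secured, the remainder of the argument is identical to the one for Claim~\ref{claim:nofixprdivgenca}, using only $\omega\in\mathfrak{p}$ and the nonvanishing property \eqref{eq:nonvaneee}.
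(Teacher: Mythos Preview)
Your proof is correct and follows essentially the same route as the paper's. The only cosmetic difference is that you invoke Lemma~\ref{lem:zero}(\ref{lem:zeropart2}) together with Remark~\ref{rmk:sigminter} to obtain the containment $\mathscr{F}_{\underline{Y}}(G)\cap\mathcal{P}\subset\Omega$ in one stroke (mirroring the computation in Claim~\ref{clm:existinfmanywe}), whereas the paper argues pointwise on ${\rm Supp}(\mathfrak{p})\cap\mathcal{E}$ using the dichotomy of Lemma~\ref{lem:zero}(\ref{lem:zeropart1}); the two arguments unwind to the same thing.
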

\begin{proof}[Proof of Claim \ref{claim:nothavifd}]
Assume on the contrary that $G_1\cdots G_s$ has a fixed divisor $\mathfrak{p} \in \mathcal{P}$ \hbox{w.r.t.} ${\underline Y}$. 

First we wish to show that ${\rm Supp}(\mathfrak{p}) \cap \mathcal{E} \subset \mathcal{S}$. Let $\mathfrak{q} \in {\rm Supp}(\mathfrak{p}) \cap \mathcal{E}$. Note that $\mathfrak{q}$ is a prime ideal and is a fixed divisor of $G_1\cdots G_s$ \hbox{w.r.t.} $\underline Y$. By Lemma \ref{lem:zero}, either $|\mathcal{Z}/\mathfrak{q}| \leq \max_{1 \leq j \leq n} \deg_{Y_j}(G_1\cdots G_s)$ or $\mathfrak{q} \in {\rm Div}_{\mathcal{Z}}(G_1 \cdots G_s)$. In the first case, 
$$|\mathcal{Z}/\mathfrak{q}| \leq \max_{1 \leq j \leq n} \deg_{Y_j}(G_1\cdots G_s) \leq \sum_{i=1}^n\left(\sum_{r=1}^k m_{0,r} d_{r,i}+z_{0,i}\right)=\Delta,$$
so $\mathfrak{q} \in \mathcal{S}$. Suppose now we are in the second case, that is, $\mathfrak{q}$ divides $G_1 \cdots G_s$. By Claim \ref{claim:specterm}, $\mathfrak{q}$ divides $a_0 \omega^\Gamma$. Therefore, by Claim \ref{claim:invardiv}, we have $\mathfrak{q} \in \mathcal{S}$. Consequently, ${\rm Supp}(\mathfrak{p}) \cap \mathcal{E} \subset \mathcal{S}$. 

The conclusion above implies that $\mathfrak{p} \in \Omega$. In particular, $\mathfrak{p}$ divides $\omega$. Therefore
$$
(P_1\cdots P_s)(\underline{\theta},\underline{\alpha}) \equiv (G_1\cdots G_s)(\underline{\alpha}) \equiv 0 \pmod{\mathfrak{p}},
$$
contradicting \eqref{eq:nonvaneee}. Consequently, $G_1 \cdots G_s$ has no fixed divisor in $\mathcal{P}$ w.r.t. $\underline{Y}$.
\end{proof}

%


\subsection{Proof of Corollary \ref{cor:Z[Y_0,...,Y_n]}} \label{ssec:pf-corollary1.9}
Consider the ring ${\mathcal Z}[Y_0, Y_1,\ldots,Y_n]$ ($n\geq 1$) equipped with the $(n+1)$ Weil heights induced by the partial degrees $\deg_{Y_i}(\cdot)$, $i=0,1,\ldots,n$. Set 
\vskip 0,5mm
\centerline{$\underline Y^+=(Y_0,Y_1,\ldots,Y_n)$ and $\underline Y=(Y_1,\ldots,Y_n)$.} 
\vskip 0,5mm

\noindent
According to Definition \ref{def:schinzel}, what is to be proved is this:
\vskip 1mm

\noindent
(**) \emph{given $s\geq 1$ polynomials $P_1,\ldots,P_s \in {\mathcal Z}[\underline Y^+][\underline T]$, irreducible in $\mathcal{Q}(\underline Y^+)[\underline T]$
and such that the product $P_1\cdots P_s$ has no nonunit divisor in ${\mathcal Z}[\underline Y^+]$, and given any constant $A>0$, the subset ${\mathcal S}_{\mathcal{Z}}(\underline P,A)\subset {\mathcal Z}[\underline Y^+]^k$ consisting of all $k$-tuples $(M_1(\underline Y^+),\ldots,M_k(\underline Y^+))$ 
of polynomials such that}
\vskip 1mm
\noindent
{\rm (i)} \emph{$P_i(\underline Y^+, M_1(\underline Y^+),\ldots,M_k(\underline Y^+))$ is irreducible in ${\mathcal Z}[\underline Y^+]$ ($i=1,\ldots,s$)},
\vskip 1mm 

\noindent
{\rm (ii)} $\deg_{Y_i}(M_j) >A$, $i=0,1,\ldots,n$, $j=1,\ldots,k$.
\vskip 1mm

\noindent
\emph{is Zariski-dense.}
\vskip 1mm

The idea is to apply Theorem \ref{thm:main3} in the special case that the integrally Hilbertian ring there is ${\mathcal Z}[Y_0]$. Except for requirement (ii) with $i=0$, Theorem \ref{thm:main3} in this special case, gives a more precise conclusion than (**) for which the degrees $\deg_{Y_i}(M_j)$ ($i=1,\ldots,n$, $j=1,\ldots,k$) can be any integers $d_{ij}$ provided that these integers are suitably large. The whole desired conclusion (**) follows then from a strengthening relative to the variable $Y_0$ of Theorem \ref{thm:main3} 
for ${\mathcal Z}[Y_0]$. 

More specifically, Theorem \ref{thm:main3} asserts that the set, denoted there by 

\vskip 0,5mm

\centerline{${\mathcal S}_{{\mathcal Z}[Y_0][\underline Y]}(\underline P,{\underline d}_1,\ldots,{\underline d}_n)$} 
\vskip 1mm

\noindent
is Zariski-dense (under appropriate assumptions on ${\underline d}_1,\ldots,{\underline d}_n)$. This subset is viewed as a subset of some affine space ${\mathbb A}^N$ of $N$-tuples (by viewing the polynomials $M_i (\underline Y^+)$ as the tuples of their coefficients). The strengthening we need is to be able to guarantee that, for any prescribed real number $A>0$, if the coordinates 
(in ${\mathcal Z}[Y_0]$) of the $N$-tuples are also required to be of degree $> A$  in $Y_0$, the resulting 
subset of ${\mathcal S}_{{\mathcal Z}[Y_0][\underline Y]}(\underline P,{\underline d}_1,\ldots,{\underline d}_n)$ remains Zariski-dense.

As Proposition \ref{prop:generalSchH} recapitulates, the proof of Theorem \ref{thm:main3} shows that the 
$N$-tuples in question (those in the set ${\mathcal S}_{{\mathcal Z}[Y_0][\underline Y]}(\underline P,{\underline d}_1,\ldots,{\underline d}_n)$) come from an application of the 
integral Hilbertian property to some polynomials. Thus, the question becomes 
whether one can guarantee that the integral Hilbertian property of ${\mathcal Z}[Y_0]$  
(which is Theorem \ref{thm:schinzel-main}) is true in this more refined version --- that the "good" 
specializations can be required to be of degree 
$> A $ in $Y_0$. 

This final check should be done in two steps because integral Hilbertianity of ${\mathcal Z}[Y_0]$ is proved via the locally Schinzel property. Regarding the latter, the needed extra requirement is indeed guaranteed, by Proposition \ref{prop:R[U]-loc-Schinzel-ring}, which produces, as required for the locally Schinzel property, a certain arithmetic progression $(\omega \ell + \alpha)_{\ell \in {\mathcal Z}[Y_0]}$, with $\omega, \alpha \in {\mathcal Z}[Y_0]$, and shows further that  $\omega$ can be chosen of degree $>A$ in $Y_0$. It remains to check that in the proof of Proposition \ref{prop:schinzel+Hil=intH} (deducing integral Hilbertianity from the locally Schinzel property), the produced ``good'' tuples can also be taken of degree $>A$ in $Y_0$: this is clear as each of their coordinates can be picked in an infinite subset of some arithmetic progression as above.


\subsection{Proof of Theorem \ref{thm:schinzel1=>Schinzelk}} \label{ssec:pf-schinzel1=>Schinzelk}
Denote by $H_1,\dots,H_n$ the Weil heights corresponding to the sets of primes $S_1,\dots,S_n$ respectively. Let $P_1(\underline{T}),\dots,P_s(\underline T) \in \mathcal{Z}[\underline T]$ be polynomials that are irreducible in $\mathcal{Q}[\underline T]$ and such that
$
P_1 \cdots P_s
$ has no fixed divisor w.r.t. $\underline{T}$. As in the proof of Theorem \ref{thm:hypschistronggen}, for each $r=1,\dots,k$, consider the tuple of variables $\underline{\lambda}_r=(\lambda_{r,0},\dots,\lambda_{r,\ell_r-1})$ (with $\ell_r \geq 1$), and define the polynomial 
$$
M_r(\underline{\lambda}_r,Y)=\theta_r+ \omega \cdot R_r(\underline{\lambda}_r,Y),
$$ where $R_r$ is as in \eqref{eq:genericRr} and $\omega$ as defined in the general setting, just before Claim \ref{claim:invardiv}. Set $\underline{\lambda}=(\underline{\lambda}_1,\dots,\underline{\lambda}_k)$, and define
$$
\underline{M}(\underline{\lambda},Y)=(M_1(\underline{\lambda}_1,Y),\dots,M_k(\underline{\lambda}_k,Y)).
$$

As in the proof of Theorem \ref{thm:hypschistronggen}, for 
$\underline{m}=(\underline{m}_1,\dots,\underline{m}_k)$ in a Zariski-dense subset  $ \mathcal H \subset \mathcal{Z}^{\ell_1} \times \dots \times \mathcal{Z}^{\ell_r}=\mathcal{Z}^{\ell_1+ \dots+ \ell_r}$, each polynomial
$
P_j(\underline{M}(\underline{m},Y)) 
$ is irreducible in $\mathcal{Q}[Y]$, and the product
$$
\prod_{j=1}^s P_j(\underline{M}(\underline m ,Y))
$$ has no fixed divisor w.r.t. $Y$. 

For each $\underline{m} \in \mathcal{H}$, there exists a constant $c_{\underline{m}} >0$ such that
\begin{equation}\label{eq:minweilheigh}
H_i(M_r(\underline{m},x)) \geq c_{\underline{m}}H_i(x)^{\deg_Y(M_{r}(\underline{m},Y))},
\end{equation} for all $r=1,\dots,k$, all $i=1,\dots,n$, and all $x \in \mathcal{Z}$.

Fix $A>0$. Choose $B>0$ such that $H_i(x) \geq B$ implies $c_{\underline{m}}H_i(x)^{\deg_Y(M_{r}(\underline{m},Y))} \geq A$ for all $r=1,\dots,k$ and all $i=1,\dots,n$. Since $\mathcal{Z}$ is {is assumed to satisfy the $1$-parameter case of the Schinzel property}, for each $\underline{m} \in \mathcal{H}$ there exists an infinite set $K_{\underline m,B} \subset \mathcal{Z} \cap \bigcap_{i=1}^n \{  H_i \geq B \}$ such that for every $x \in K_{\underline m,B}$, each polynomial $P_j(\underline{M}(\underline{m},x))$ is irreducible in $\mathcal Z$. 

 For each $\underline{m} \in \mathcal H$, note that the set
$$
L_{\underline{m},A}:=\{x \in K_{\underline{m},B} \mid H_i(M_r(\underline{m}_r,x) \geq A,\quad\textrm{for all } r=1,\dots,k\,\,\textrm{and }i=1,\dots,n\}
$$ is infinite. It remains to prove that
$$
R_A:=\bigcup_{\underline{m} \in \mathcal{H}} E_{\underline{m},A} \subset \mathcal{Z}^k \cap \bigcap_{i=1}^n \{ H_i \geq A\}
$$ is Zariski-dense, where
$$
E_{\underline{m},A}=\{\underline{M}(\underline{m},x) \mid x \in L_{\underline{m},A}\}.
$$
For that, let $Q(\underline{X}) \in \mathcal{Q}[\underline{X}]$ be a polynomial that vanishes on $R_A$. For every $\underline{m} \in \mathcal H$, since $L_{\underline{m},A}$ is infinite, it follows that
$$
Q(\underline{M}(\underline{m},Y))
$$ 
is zero as a polynomial in $Y$. As $\mathcal H$ is Zariski-dense, we deduce that
$$
Q(\underline{M}(\underline{\lambda},Y))=0.
$$ Since $0=\varphi^{-1}(Q(\underline{M}(\underline{\lambda},Y)))=Q(\lambda_{1,0},\dots,\lambda_{k,0})$--where $\varphi$ is as in the proof of Claim \ref{claim:irrfigencas})--we obtain $Q=0$. Consequently $R_A$ is Zariski-dense.

\bibliography{main-2}
\bibliographystyle{alpha}
\end{document}